\newtheorem{theorem}{Theorem}[section]
\newtheorem{lemma}[theorem]{Lemma}
\newtheorem{definition}[theorem]{Definition}
\newtheorem{proposition}[theorem]{Proposition}
\newtheorem{corollary}[theorem]{Corollary}
\newtheorem{remark}[theorem]{Remark}
\numberwithin{equation}{section}
\newcommand{\N}{\mathbb{N}}
\newcommand{\Z}{\mathbb{Z}}
\newcommand{\R}{\mathbb{R}}
\newcommand{\C}{\mathbb{C}}
\newcommand{\Sc}{\mathcal{S}}
\newcommand{\F}{\mathcal{F}}
\newcommand{\T}{\mathbb{T}}
\newcommand{\J}{\mathfrak{J}}
\newcommand{\real}{\operatorname{Re}}
\DeclareMathOperator{\trace}{\rm tr\,} 
\newcommand{\supp}{\operatorname{supp}}
\title{Wellposedness of NLS in Modulation Spaces}
\author{Friedrich Klaus}
\address{Karlsruhe Institute of Technology, Englerstraße 2, 76131 Karlsruhe}
\email{friedrich.klaus@kit.edu}
\begin{document}
\maketitle

\begin{abstract}
	We prove new local and global well-posedness results for the cubic one-dimensional nonlinear Schrödinger equation in modulation spaces. Local results are obtained via multilinear interpolation. Global results are proven using conserved quantities based on the complete integrability of the equation, persistence of regularity, and by separating off the time evolution of finitely many Picard iterates.
\end{abstract}

\section{Introduction}

The last years have brought a great amount of interest towards the dynamics of the cubic nonlinear Schrödinger equation (NLS)
\begin{equation}\label{eq:NLS}
	\begin{cases}
		iu_t + u_{xx} = \pm 2|u|^2 u,\\
		u(0) = u_0,
	\end{cases}
\end{equation}
with initial data $u_0$ either decaying very slowly or not decaying at all. There are several ways to tackle this problem. In this paper we investigate the behavior of solutions to \eqref{eq:NLS} when the initial data is in a modulation space $M^s_{p,q}(\R)$ in one dimension.

Modulation spaces $M_{p,q}^s$ were introduced by Feichtinger \cite{feichtinger} and have by now been used in the study of various different PDE, see also \cite{ruzhansky,wangbook}. One of the reasons why they serve as an interesting space of initial data is because the decay of functions in modulation spaces $M^s_{p,q}$ is comparable to the one of functions in $L^p$. In particular, the spaces with $p = \infty$ include non-decaying initial data and provide them with an elegant function space framework. In contrast to $L^p$ and Besov spaces, the Schrödinger propagator is bounded on any modulation space $M^s_{p,q}$, and dispersive $L^\infty$ blow-up phenomena as constructed in \cite{dispersiveblowup} can be ruled out. A major open problem in this context is whether global in time existence in $M^s_{\infty,q}$ can be guaranteed for certain $s,q$. Just to name one of the many consequences an affirmative answer would have, this would solve the question whether a local solution to
\[
	u_0(x) = \cos(x) + \cos(\sqrt{2}x),
\]
can be continued globally. A unique local solution exists, e.g., by the work \cite{dss2} in a space of analytic functions, or by Picard iteration in the space $M_{\infty,1}$. While we are not able to give an answer to this question, we are able to prove global results with arbitrarily large $p < \infty$. Among other results (see Theorem \ref{thm:gwpplarge}) we will show: \emph{In the defocusing case, if $1 \leq p < \infty, 1 \leq q \leq \infty$ and if $s \geq 0$ is large enough, there is a unique global solution of \eqref{eq:NLS} in $M^s_{p,q}(\R)$}.

Local wellposedness results for nonlinear Schrödinger equations with initial data in modulation spaces have first been proven in \cite{wanghudzik2,wangzhaoguo,benyi,bhimani}. These results rely on boundedness of the Schrödinger propagator and an algebra property which holds either when $s \geq 0, q = 1$, or when $s > 1-1/q$. Later, the works \cite{guo,pattakos,kitgroup1} increased the range of admissible $p,q$ for $s=0$ using refined trilinear estimates for $p = 2, 2 \leq q < \infty$ and an infinite normal form reduction technique for $1 \leq q \leq 2, 2 \leq p \leq 10q'/(q'+6)$, respectively. Using complete integrability of the cubic one-dimensional NLS, Oh-Wang \cite{ohwang} showed the solutions of \cite{guo} to be global. Global solutions for initial data in $M_{p,p'}$ with $p$ sufficiently close to $2$ were constructed in \cite{kitgroup2}, though we note that these solutions were allowed take value in a different space $M_{\tilde p, \tilde q}$ for $t > 0$. Using decoupling techniques, Schippa \cite{schippa} recently proved $L^p$ smoothing estimates and extended the range of local wellposedness results for $p \in \{4,6\}$ and also, inspired by the work \cite{dodson}, gave global results for $q = 2, 2 \leq p < \infty, s > 3/2$. Finally we want to mention the preprint \cite{schippa2} in which Schippa very recently considered the energy-critical NLS with initial data in modulation spaces.

The goal of this work is twofold: On the one hand we want to give an overview of local well-posedness results and to unify the local results for $s = 0$. This is done by a Banach fixed point argument using multilinear interpolation on the estimates obtained in \cite{guo} and the trivial estimates for $q = 1$. From this we obtain local well-posedness in a range of $(p,q)$, comprising all of the aforementioned range for $s=0$ for which local well-posedness results were shown, except for the point $(p,q) = (4,2)$ from \cite{schippa}. The regularity $s = 0$ is sharp if we aim for analytic well-posedness by the considerations we provide in Section \ref{sec:illposedness}. 

On the other hand, we aim to extend the range of $(p,q)$ with global results, possibly assuming higher regularity of the initial data. To this end we first extend the almost conserved energies constructed in \cite{ohwang} to the range $p = 2, 1\leq q < 2$ and then use the principle of persistence of regularity to see that for a restricted range of $1 \leq p,q \leq 2$, the newly constructed local solutions are also global. Finally, we prove as in \cite{schippa,dodson} that in the defocusing case when we take $s \geq 1$, we obtain global solutions in $M^1_{p,1}$ for any $2 < p < \infty$. In fact, the same technique shows global well-posedness in $M^s_{p,q}$ for any $2 < p < \infty$ if $s > 2-1/q$ is large enough.

The paper is structured as follows: In Section \ref{sec:modulationspaces} we state basic facts on modulation spaces, in Section \ref{sec:qwp} we introduce the notion of quantitative well-posedness which gives the analytical framework to obtain our local well-posedness results in Section \ref{sec:lwp}. In Section \ref{sec:gwp} we prove the global results first for $p = 2$, then for $1 \leq p < 2$ and finally for $2 < p< \infty$. The well-posedness results are complemented by an illposedness result for $s < 0$ shown in Section \ref{sec:illposedness}.

\subsection*{Acknowledgements} The author wants to thank Peer Kunstmann and Robert
Schippa for helpful discussions. Funded by the Deutsche Forschungsgemeinschaft
(DFG, German Research Foundation) – Project-ID 258734477 – SFB 1173.

\section{Modulation Spaces}\label{sec:modulationspaces}

In this section we recall the definition of modulation spaces and state some results we need in later sections. Modulation spaces were introduced by Feichtinger \cite{feichtinger} in 1983 and have found growing interest in recent years. They can be introduced either via the short-time Fourier transform or equivalently via isometric decomposition on the Fourier side which also shows their close connection to Besov spaces. Modern introductions to modulation spaces are given in the books \cite{groechenig,wangbook}, and we also want to mention the PhD thesis \cite{chaichenetsthesis}. We refer to these for proofs of the following statements.

\begin{definition}
	The short-time Fourier transform of a function $f$ with respect to the window function $g \in \Sc(\R)$ is defined as
	\[
		V_g f(x,\xi) = \int_\R e^{-iy\xi}f(y)\bar g(y-x)\,dy.
	\]
	The modulation space norm of a function $f$ is defined as
	\[
		\|f\|^{\circ}_{M^s_{p,q}} = \Big(\int_{\R}\Big(\int_\R \big|V_g f(x,\xi)\big|^p\,dx\Big)^{\frac{q}{p}}\langle \xi\rangle^{sq}\,d\xi\Big)^{\frac{1}{q}}.
	\]
\end{definition}
With the usual modifications, this definition also includes $p,q = \infty$. We define the modulation space $M^{s}_{p,q}$ as those distributions in $\Sc'(\R)$ which have finite modulation space norm.

There is an equivalent norm on modulation spaces. Let $\rho \in \Sc(\R)$ be a smooth, symmetric bump function, that is $0 \leq \rho \leq 1$, $\rho(\xi) = 1$ if $|\xi|\leq 1/2$, $\rho(\xi) = 0$ if $|\xi| \geq 1$. Let
\[
	\rho_k(\xi) = \rho(\xi-k), \quad k \in \Z.
\]
Define $Q_0 = [-1/2,1/2)$ and $Q_k = k + Q_0$. Define
\[
	\sigma_k(\xi) = \rho_k(\xi)\big(\sum_{k\in\Z}\rho_k(\xi)\big)^{-1}, \quad k \in \Z.
\]
Then, for some $c > 0$, the functions $\sigma_k$ satisfy
\begin{equation}\label{eq:IDO}
	\begin{cases}
		|\sigma_k(\xi)| \geq c, \quad \forall \xi \in Q_k,\\
		\supp(\sigma_k) \subset \{|\xi-k| \leq 1\},\\
		\sum_{k\in \Z} \sigma_k(\xi) = 1, \quad \forall \xi \in \R,\\
		|D^{\alpha}\sigma_k(\xi)| \leq C_m, \quad \forall \xi \in \R, |\alpha| \leq m.
	\end{cases}
\end{equation}
\begin{definition}\label{def:IDO}
	Given a sequence of functions $\sigma_k$ satisfying \eqref{eq:IDO}, the sequence of operators
	\[
		\square_k = \F^{-1}\sigma_k \F, \quad k \in \Z,
	\]
	is called a family of isometric decomposition operators.
\end{definition}
\begin{definition}
	Given $p,q \in [1,\infty]$, $s \in \R$ and $(\square_k)_k$ a family of isometric decomposition operators. The modulation space norm with respect to $(\square_k)_k$ is defined as
	\[
		\|f\|_{M^{s}_{p,q}} = \big\|\langle k \rangle^s \|\square_k f\|_{L^p(\R)} \big\|_{\ell^q_k(\Z)}.
	\]
\end{definition}
It can be shown that for any family of isometric decomposition operators, $M^{s}_{p,q}$ can be equivalently characterized as those distributions in $\Sc'(\R)$ which have finite modulation space norm $\|\cdot\|_{M^{s}_{p,q}}$, and the norms $\|\cdot\|_{M^{s}_{p,q}}$ and $\|\cdot\|^{\circ}_{M^{s}_{p,q}}$ are equivalent. Moreover, the space of Schwartz functions $\Sc(\R)$ is dense in $M^{s}_{p,q}$ for any $p,q \in [1,\infty)$. If $p = \infty$, density fails. For instance we have continuous embeddings $C^2_b(\R) \subset M_{\infty,1} \subset C^0_b(\R)$.

As a consequence of Hölder's and Young's convolutional inequalities, we obtain bilinear bounds. These imply in particular that the spaces $M_{p,1}$ as well as $M_{p,q}\cap M_{\infty,1}$ are algebras under multiplication for all $p,q \in [1,\infty]$.
\begin{lemma}
	The following inequalities hold true: If $\frac{1}{p} = \sum_{i=1}^m \frac{1}{p_i}$ and $m-1+\frac{1}{q} = \sum_{i=1}^m \frac{1}{q_i}$ then
	\begin{equation}\label{eq:genhoelder}
		\Big\|\prod_{i=1}^m f_i\Big\|_{M_{p,q}} \lesssim \prod_{i=1}^m\|f_i\|_{M_{p_i,q_i}},
	\end{equation}
	and if $s \geq 0$, $\frac{1}{p} = \frac{1}{p_1} + \frac{1}{p_2}$, $1+\frac{1}{q} = \frac{1}{q_1}+\frac{1}{q_2} = \frac{1}{r_1}+\frac{1}{r_2}$ then
	\begin{equation}\label{eq:genhoelder2}
		\|fg\|_{M^{s}_{p,q}} \lesssim \|f\|_{M^{s}_{p_1,q_1}}\|g\|_{M_{p_2,q_2}} + \|f\|_{M_{p_1,r_1}}\|g\|_{M^s_{p_2,r_2}}.
	\end{equation}
\end{lemma}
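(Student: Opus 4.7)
My plan is to work with the isometric decomposition characterization from Definition \ref{def:IDO} and to reduce both estimates to Hölder's inequality in $x$ combined with Young's discrete convolution inequality on $\ell^q(\Z)$. The decisive observation is a frequency-support computation: since $\widehat{\square_{k_i} f_i}$ is supported in $\{|\xi - k_i| \leq 1\}$, the Fourier transform of $\prod_{i=1}^m \square_{k_i} f_i$ lives in a ball of radius $m$ around $k_1 + \cdots + k_m$. Using the partition of unity to write $f_i = \sum_{k_i} \square_{k_i} f_i$ and expanding the product, the operator $\square_k$ therefore annihilates every summand with $|k - (k_1 + \cdots + k_m)| > m+1$. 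Each $\square_k$ is moreover convolution with the Schwartz function $\F^{-1}\sigma_k$, hence bounded on $L^p$ uniformly in $k$.

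Granted this reduction, Hölder's inequality in $x$ immediately gives
\[
\|\square_k(f_1 \cdots f_m)\|_{L^p} \lesssim \sum_{|k-k_1-\cdots-k_m|\leq m+1} \prod_{i=1}^m \|\square_{k_i} f_i\|_{L^{p_i}},
\]
which, up to convolution with a fixed finitely supported sequence on $\Z$, is the $m$-fold discrete convolution of the sequences $a^{(i)}_k := \|\square_k f_i\|_{L^{p_i}}$. Applying the iterated Young inequality $\|a^{(1)} \ast \cdots \ast a^{(m)}\|_{\ell^q} \lesssim \prod_i \|a^{(i)}\|_{\ell^{q_i}}$, whose assumption $m-1 + \frac{1}{q} = \sum_i \frac{1}{q_i}$ is exactly the hypothesis of the lemma, then produces \eqref{eq:genhoelder}.

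For the weighted bilinear estimate \eqref{eq:genhoelder2} the only new input is the elementary bound
\[
\langle k \rangle^s \lesssim \langle k_1 \rangle^s + \langle k_2 \rangle^s, \qquad s \geq 0,\ |k - k_1 - k_2| \leq 3,
\]
which lets us absorb the weight $\langle k \rangle^s$ onto either the $f$- or the $g$-factor. Splitting the estimate accordingly and applying the bilinear Young inequality with exponents $(q_1, q_2)$ to the first piece and $(r_1, r_2)$ to the second recovers precisely the two summands on the right of \eqref{eq:genhoelder2}.

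I do not expect any substantial obstacle here: everything reduces to the uniform $L^p$-boundedness of $\square_k$, Hölder in $x$, and Young's convolution inequality. The step that requires the most care is making the expansion $\prod_i f_i = \sum_{k_1, \dots, k_m} \prod_i \square_{k_i} f_i$ rigorous; this is standard because only finitely many tuples $(k_1,\dots,k_m)$ contribute to any given output frequency $k$, and the general case can be obtained from Schwartz approximations by density in the cases $p_i, q_i < \infty$.
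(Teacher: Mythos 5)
Your proposal is correct and follows essentially the same route as the paper's proof: frequency-support localization of $\square_{k_i}f_i$, uniform $L^p$-boundedness of $\square_k$, Hölder in $x$, and Young's convolution inequality on $\ell^q$, with the sub-additivity $\langle k\rangle^s \lesssim \langle k_1\rangle^s + \langle k_2\rangle^s$ (Peetre's inequality) handling the weight. The only cosmetic difference is that you treat the $m$-linear case by iterated Young directly, whereas the paper does $m=2$ and inducts.
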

\begin{proof}
	We give a short proof since \cite[Theorem 4.3]{chaichenetsthesis} only proves a similar statement. If we use the notation $l_1 + l_2 \approx k$ for $l_1 + l_2 = k + \{-1,0,1\}$, then
	\[
		\square_k (fg) = \square_k \big(\sum_{l_1}\square_{l_1}f\big)\big(\sum_{l_2}\square_{l_2}g\big) = \sum_{l_1 + l_2 \approx k} (\square_k\square_{l_1}f) (\square_k\square_{l_2}g).
	\]
	The operators $\square_{k}$ are bounded uniformly in $k$ on $L^{p_i}$. Hence
	\[
		\|\square_k (fg)\|_{L^p} \lesssim \sum_{l_1+l_2 \approx k} \|\square_{l_1}f\|_{L^{p_1}}\|\square_{l_2}f\|_{L^{p_2}}.
	\]
	Consequently, \eqref{eq:genhoelder} with $m = 2$ is obtained from Young's convolutional inequality. The case of general $m$ follows by induction. For \eqref{eq:genhoelder2} we use Peetre's inequality to see
	\[
		\langle k\rangle^s \|\square_k (fg)\|_{L^p} \lesssim  \sum_{l_1+l_2 \approx k} \langle l_1\rangle^s\|\square_{l_1}f\|_{L^{p_1}}\|\square_{l_2}f\|_{L^{p_2}} + \|\square_{l_1}f\|_{L^{p_1}}\langle l_2\rangle^s\|\square_{l_2}f\|_{L^{p_2}},
	\]
	and we conclude using Young's inequality.
\end{proof}

The bilinear bound allows to handle algebraic nonlinearities in nonlinear PDE. More complicated nonlinearities on the other hand can cause problems. In \cite{ruzhansky} Ruzhansky-Sugimoto-Wang raised the question whether an inequality of the form
\[
	\||f|^\alpha f\|_{M_{p,1}} \lesssim \|f\|_{M_{p,1}}^{\alpha+1}
\]
holds if $\alpha \in (0,\infty)\setminus 2\N$. This was answered negatively by Bhimani-Ratnakumar in \cite{bhimani}. In fact, they proved the stronger result that if a function $F: \R^2 \to \C$ operates in $M_{p,1}$ for some $1 \leq p \leq \infty$, then $F$ must be real analytic on $\R^2$. This also shows that in general, neither implication between $f \in M_{p,1}$ and $|f| \in M_{p,1}$ holds.

The following theorem shows how modulation spaces are nested. The first inclusion is a consequence of Bernstein's inequality and the embedding of $\ell^q$ spaces, whereas the second is a consequence of Hölder's inequality. 

\begin{theorem}[Embeddings]\label{thm:embedding}
    The following embeddings hold true:
    \begin{itemize}
        \item $M_{p_1,q_1}^{s_1} \subset M_{p_2,q_2}^{s_2}$ if $\quad p_1 \leq p_2, q_1 \leq q_2, s_1 \geq s_2$,
        \item $M_{p,q_1}^{s_1} \subset M_{p,q_2}^{s_2}$ if $\quad q_1 > q_2, s_1 > s_2, s_1 + \frac1{q_1}> s_2 + \frac1{q_2}$.
    \end{itemize}
\end{theorem}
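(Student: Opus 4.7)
The plan is to prove both embeddings by unwinding the definition of the modulation space norm via a fixed family of isometric decomposition operators $(\square_k)_k$, and then applying elementary tools to the two factors of the norm, namely the $L^p$-norm of each frequency block and the weighted $\ell^q$-norm over $k \in \Z$.

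For the first embedding, fix $f \in M_{p_1,q_1}^{s_1}$. The key observation is that each $\square_k f$ has Fourier support in the ball $\{|\xi - k| \leq 1\}$ of radius $\sim 1$ independent of $k$. Bernstein's inequality for functions with compactly supported Fourier transform therefore yields
\[
\|\square_k f\|_{L^{p_2}} \lesssim \|\square_k f\|_{L^{p_1}}
\]
with a constant independent of $k$, provided $p_1 \leq p_2$. Since $s_1 \geq s_2$ we also have $\langle k \rangle^{s_2} \leq \langle k \rangle^{s_1}$, and the standard nesting $\ell^{q_1} \hookrightarrow \ell^{q_2}$ (for $q_1 \leq q_2$) finishes the estimate
\[
\|\langle k\rangle^{s_2}\|\square_k f\|_{L^{p_2}}\|_{\ell^{q_2}_k} \lesssim \|\langle k\rangle^{s_1}\|\square_k f\|_{L^{p_1}}\|_{\ell^{q_1}_k} = \|f\|_{M_{p_1,q_1}^{s_1}}.
\]

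For the second embedding, the idea is to borrow the extra decay in $k$ from the gap in $s$ via Hölder's inequality on the $\ell^q$-side. Define $r \in (1,\infty]$ by $\frac{1}{r} = \frac{1}{q_2} - \frac{1}{q_1}$ (well-defined since $q_1 > q_2$). Writing $\langle k \rangle^{s_2} = \langle k\rangle^{s_2-s_1} \cdot \langle k\rangle^{s_1}$ and applying Hölder in $k$ gives
\[
\|\langle k\rangle^{s_2}\|\square_k f\|_{L^p}\|_{\ell^{q_2}_k} \leq \|\langle k\rangle^{s_2-s_1}\|_{\ell^r_k}\,\|\langle k\rangle^{s_1}\|\square_k f\|_{L^p}\|_{\ell^{q_1}_k}.
\]
The second factor equals $\|f\|_{M_{p,q_1}^{s_1}}$, while the first factor is finite precisely when $(s_1-s_2)r > 1$, which unravels to $s_1 - s_2 > \frac{1}{q_2} - \frac{1}{q_1}$, i.e.\ the hypothesis $s_1 + \frac{1}{q_1} > s_2 + \frac{1}{q_2}$. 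The strict inequality $s_1 > s_2$ is needed when $q_1 = \infty$ (so $r = q_2$) to ensure summability of $\langle k \rangle^{(s_2-s_1)r}$ over $\Z$.

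Neither step presents a real obstacle; the only mild care is bookkeeping the boundary cases $p, q \in \{1, \infty\}$ (Bernstein and Hölder both remain valid there) and verifying that the constants from Bernstein can be chosen uniformly in $k$, which follows from the translation invariance of Lebesgue measure since all supports of $\sigma_k$ are translates of a fixed compact set.
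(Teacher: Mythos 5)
Your proof is correct and follows exactly the route the paper indicates (the paper only sketches this result, attributing the first embedding to Bernstein's inequality and the nesting of $\ell^q$ spaces, and the second to Hölder's inequality on the $\ell^q$-side, which is precisely what you carry out). The only cosmetic quibble is your closing remark: summability of $\langle k\rangle^{(s_2-s_1)r}$ always requires the full condition $(s_1-s_2)r>1$, and in fact $s_1>s_2$ is already implied by $s_1+\frac{1}{q_1}>s_2+\frac{1}{q_2}$ together with $q_1>q_2$, so it is not a separately operative hypothesis in your argument.
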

The latter shows that we can trade regularity for $l^q$ summability. In one dimension, this gives for example $H^{1/2} \subset M^{2,1+}$ respectively $H^{1/2+} \subset M^{2,1}$. This is sharp since $M^{2,1} \subset L^\infty$ whereas $H^{1/2} \not\subset L^\infty$. On the other hand, $l^q$ summability does not gain regularity (see \cite{wanghudzik}):

\begin{lemma}
    We have that $M_{p, q} \not \subset B_{p, r}^{\varepsilon} \cup B_{\infty, \infty}^{\varepsilon}$ for any $0<\varepsilon \ll 1, 1\leq p, q, r \leq \infty$.
\end{lemma}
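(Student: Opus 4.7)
My plan is to exhibit a single element of $M_{p,q}$ that lies in neither $B^{\varepsilon}_{p,r}$ nor $B^{\varepsilon}_{\infty,\infty}$, built as a lacunary sum of frequency-modulated Schwartz bumps. The underlying idea is the mismatch between the unit-cube frequency decomposition underlying modulation spaces and the dyadic decomposition underlying Besov spaces: a function whose Fourier transform sits in a single unit interval near frequency $N$ has $M_{p,q}$-norm bounded uniformly in $N$, while it acquires a factor of $N^{\varepsilon}$ in any Besov norm of positive smoothness.

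As a first step, I would fix $\phi \in \Sc(\R)$ with $\hat\phi$ supported in $[-1/2,1/2]$ and $\phi \not\equiv 0$, and define $\phi_N(x) = e^{iNx}\phi(x)$, so that $\widehat{\phi_N}$ is concentrated in the cube $Q_N$. Three quick computations yield: (i) $\|\phi_N\|_{M_{p,q}} \lesssim \|\phi\|_{L^p}$ uniformly in $N$, since $\square_k\phi_N$ vanishes unless $|k-N|\leq 1$; (ii) $\|\phi_N\|_{B^{\varepsilon}_{p,r}} \gtrsim N^{\varepsilon}\|\phi\|_{L^p}$; and (iii) $\|\phi_N\|_{B^{\varepsilon}_{\infty,\infty}} \gtrsim N^{\varepsilon}\|\phi\|_{L^\infty}$, where the last two follow since only the dyadic block of index $j_N \sim \log_2 N$ contributes, and on that block $\triangle_{j_N}\phi_N$ is essentially $\phi_N$ itself.

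Next I would take a very lacunary sequence $N_n = 2^{2^n}$ and coefficients $c_n = 2^{-n}$, and set $f = \sum_n c_n \phi_{N_n}$. The lacunarity is chosen so that the frequency supports of the summands lie in disjoint unit cubes (essentially decoupling the modulation space norm) and in disjoint dyadic annuli (decoupling the Besov norms). Then $\|f\|_{M_{p,q}} \lesssim \bigl(\sum_n |c_n|^q\bigr)^{1/q}$ is finite for every $q \in [1,\infty]$, while the $n$-th dyadic contribution to the Besov norm has size $|c_n|N_n^{\varepsilon}\|\phi\|_{L^p} \sim 2^{\varepsilon 2^n - n}\|\phi\|_{L^p}$, which is unbounded in $n$; this forces $\|f\|_{B^{\varepsilon}_{p,r}} = \|f\|_{B^{\varepsilon}_{\infty,\infty}} = \infty$ for every $r \in [1,\infty]$.

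The main technical point — and the only place requiring real care — is verifying that the $M_{p,q}$-, $B^{\varepsilon}_{p,r}$- and $B^{\varepsilon}_{\infty,\infty}$-norms really do decouple along the lacunary scales (using almost-orthogonality of the $Q_{N_n}$ in both decompositions), together with a check that $\sum_n c_n \phi_{N_n}$ converges in $\Sc'(\R)$ — which is immediate here since in fact it converges in $L^p$ for every $p$. The endpoint cases $p,q,r \in \{1,\infty\}$ are handled uniformly by the choice $c_n = 2^{-n}$, $N_n = 2^{2^n}$: the sequence $(c_n)_n$ lies in $\ell^q$ for every $q$, while $(c_n N_n^{\varepsilon})_n$ is unbounded and fails to lie in $\ell^r$ for every $r$ and every $\varepsilon > 0$.
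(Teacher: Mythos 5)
Your construction is correct and realizes exactly the obstruction the paper indicates: the paper defers the actual proof to the cited reference and only sketches the sequence-level obstruction $\ell^1 \not\subset \ell^2_s$ via a lacunary sequence, which is precisely what your lacunary sum of modulated bumps $\sum_n c_n e^{iN_n x}\phi(x)$ implements at the function level (uniformly bounded unit-cube pieces for the $M_{p,q}$ norm versus an unbounded factor $N_n^{\varepsilon}c_n$ on the corresponding dyadic blocks). The only loose point --- that a single Littlewood--Paley block of $\phi_{N}$ retains a fixed fraction of its $L^p$ norm --- is handled by pigeonholing over the $O(1)$ contributing blocks, a detail you already flag, so there is no gap of substance.
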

In particular an embedding of the form $M_{2,1} \subset H^{s}$ can never hold for positive regularity $s > 0$. The obstruction for this is $l^1 \not\subset l^2_s$ for $s > 0$. Indeed, one can just consider the sequence $a_n = 1/k^2$ if $n = 2^k$ and $a_n = 0$ else, i.e. spreading out mass in $l^2$ can be done without any problems - in contrast to $l^2_s$.

We recall some of the relations between modulation spaces, Besov spaces and $L^p$ spaces:

\begin{theorem}
	The following embeddings hold true:
	\begin{itemize}
		\item $M^{s}_{2,2} = H^s(\R)$, with equivalence of norms,
		\item $M_{p,1} \subset C_b^0(\R) \cap L^p(\R)$, if $\quad 1 < p \leq \infty$,
		\item $M_{p,p'} \subset L^p(\R)$, if $\quad 2 \leq p \leq \infty$,
		\item $M^{\sigma}_{p,q} \subset B_{p,q}$, if $\sigma = \max\Big(0,\frac{1}{\min(p,p')}-\frac{1}{q}\Big)$,
		\item $B_{p,q}^{\tau}\subset M_{p,q}$, if $\tau = \max\Big(0,\frac{1}{q} - \frac{1}{\max(p,p')}\Big)$.
	\end{itemize}
\end{theorem}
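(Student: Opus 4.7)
The plan is to handle each of the five embeddings separately by exploiting the unit-frequency structure encoded in Definition \ref{def:IDO}. The identification $M^s_{2,2} = H^s$ is essentially immediate from Plancherel: $\|\square_k f\|_{L^2} = \|\sigma_k \hat f\|_{L^2}$, and since the smooth partition of unity $(\sigma_k)_k$ is subordinate to the unit cubes $Q_k$ with bounded overlap (each $\xi$ lies in at most three supports), one has $\sum_k \sigma_k(\xi)^2 \sim 1$ pointwise; combined with $\langle k\rangle \sim \langle \xi\rangle$ on $\supp\sigma_k$, summing gives $\sum_k \langle k\rangle^{2s}\|\square_k f\|_{L^2}^2 \sim \int \langle \xi\rangle^{2s}|\hat f(\xi)|^2\,d\xi$.

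For $M_{p,1} \subset L^p\cap C^0_b$, I would first use the triangle inequality $\|f\|_{L^p}\leq \sum_k \|\square_k f\|_{L^p}$, and then note that each $\square_k f$ has Fourier support in a unit interval, so by Bernstein (applied after the frequency shift $\xi\mapsto\xi-k$) we get $\|\square_k f\|_{L^\infty}\lesssim \|\square_k f\|_{L^p}$ with a constant uniform in $k$; continuity then follows since $\sum_k \square_k f$ converges uniformly, each summand being continuous. The endpoint $M_{p,p'}\subset L^p$ for $2\leq p\leq\infty$ then follows by (real or complex) interpolation between $M_{2,2}=L^2$ and $M_{\infty,1}\subset L^\infty$: the interpolation line $(1/p_\theta,1/q_\theta)=(1-\theta)(1/2,1/2)+\theta(0,1)$ satisfies $q_\theta=p_\theta'$ throughout.

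The substantial work is in the two Besov embeddings, where one must carefully compare the unit-length decomposition of modulation spaces with the dyadic decomposition of Besov spaces. Each Littlewood--Paley block $\Delta_j f$ corresponds to a sum $\sum_{k\sim 2^j}\square_k f$ over the $\sim 2^j$ unit cubes inside the $j$-th dyadic shell, while a single $\square_k f$ lives in at most two adjacent shells. For $M^\sigma_{p,q}\subset B_{p,q}$, I would split into the cases $p\geq 2$ and $p\leq 2$: in the first, almost-disjointness of frequency supports and Hausdorff--Young give $\|\Delta_j f\|_{L^p}\lesssim\bigl(\sum_{k\sim 2^j}\|\square_k f\|_{L^p}^{p'}\bigr)^{1/p'}$, while in the second, interpolation between $L^1$ (triangle) and $L^2$ (orthogonality) yields $\|\Delta_j f\|_{L^p}\lesssim\bigl(\sum_{k\sim 2^j}\|\square_k f\|_{L^p}^{p}\bigr)^{1/p}$. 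Applying Hölder in the $\sim 2^j$-term sum to pass to $\ell^q$ introduces the factor $2^{j(1/\min(p,p')-1/q)_+}=2^{j\sigma}$, which is absorbed by the weight $\langle k\rangle^\sigma$. For the reverse inclusion $B^\tau_{p,q}\subset M_{p,q}$, I would use $\square_k f=\square_k\Delta_j f$ when $k\sim 2^j$, the uniform $L^p$-boundedness of $\square_k$, and then estimate the $\ell^q$-norm over the $\sim 2^j$ relevant indices; the sharp loss $2^{j\tau}$ with $\tau=(1/q-1/\max(p,p'))_+$ comes from the dual Hausdorff--Young refinement.

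The main obstacle is obtaining the sharp exponents $1/\min(p,p')$ and $1/\max(p,p')$ in the last two embeddings. The gain over trivial triangle-inequality estimates hinges on exploiting the nearly-disjoint frequency supports of the unit blocks via Hausdorff--Young, which is only nontrivial away from $p=2$, and requires the case split described above. The remaining embeddings are substantially easier and follow the familiar template of combining Plancherel, Bernstein, and interpolation.
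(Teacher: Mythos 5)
Your outline is correct. Note that the paper itself does not prove this theorem: it is stated in Section \ref{sec:modulationspaces} as background, with proofs deferred to the cited references (Gr\"ochenig's and Wang et al.'s books and Chaichenets' thesis). Your sketch reproduces the standard argument found there --- Plancherel with bounded overlap for $M^s_{2,2}=H^s$, triangle inequality plus Bernstein for $M_{p,1}$, interpolation along the line $q=p'$ for the third item, and for the two Besov embeddings the almost-orthogonality estimates $\|\sum_k f_k\|_{L^p}\lesssim\|(\|f_k\|_{L^p})_k\|_{\ell^{\min(p,p')}}$ and their duals, combined with H\"older over the $\sim 2^j$ unit cubes per dyadic shell, which is exactly where the sharp exponents $1/\min(p,p')-1/q$ and $1/q-1/\max(p,p')$ arise.
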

For example we see that $B_{2,1}^{\frac{1}{2}} \subset M_{2,1} \subset L^\infty \cap L^2$.

We will make use of the following result on complex interpolation of modulation spaces.

\begin{theorem}
    Let $p_0,p_1 \in [1,\infty]$ and $q_0,q_1 \in [1,\infty]$ such that $q_0 \neq \infty$ or $q_1 \neq \infty$. Let $s_0, s_1 \in \R$ and $\theta \in (0,1)$. Define
    \begin{align*}
        s &= (1-\theta)s_0 + \theta s_1,\\
        \frac{1}{p} &= \frac{1-\theta}{p_0} + \frac{\theta}{p_1}, \quad \frac{1}{q} = \frac{1-\theta}{q_0} + \frac{\theta}{q_1},
    \end{align*}
    with the usual convention in the extreme case $p_i,q_i = \infty$. Then,
    \begin{equation}
        \left[M_{p_{0}, q_{0}}^{s_{0}}(\mathbb{R}^{d}), M_{p_{1}, q_{1}}^{s_{1}}(\mathbb{R}^{d})\right]_{\theta}=M_{p, q}^{s}\left(\mathbb{R}^{d}\right),
    \end{equation}
    in the sense of equality of spaces and equivalence of norms.
\end{theorem}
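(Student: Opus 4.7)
The plan is to reduce the assertion to the classical complex interpolation identity for weighted mixed-norm sequence spaces via a retraction/coretraction argument, in the spirit of Triebel's treatment of Besov spaces.

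Fix a family of isometric decomposition operators $(\square_k)_k$ and auxiliary multipliers $(\widetilde{\square}_k)_k$ of slightly larger Fourier support, chosen so that $\widetilde{\square}_k \square_k = \square_k$ and so that for each $m \in \Z$ only $O(1)$ indices $k$ give $\square_m \widetilde{\square}_k \neq 0$. Define
\[
    R \colon f \mapsto (\square_k f)_{k\in\Z}, \qquad S \colon (g_k)_k \mapsto \sum_{k\in\Z}\widetilde{\square}_k g_k.
\]
Then $R \colon M^{s}_{p,q}(\R^d) \to \ell^{q}_s(\Z; L^{p}(\R^d))$ is bounded by the very definition of the modulation space norm, and $S$ is bounded on the same couple for every admissible $(s,p,q)$: this uses the finite-overlap property above, Peetre's inequality $\langle m\rangle^s \lesssim \langle k\rangle^s$ valid whenever $|m-k|=O(1)$, and the triangle inequality. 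By construction $S \circ R = \mathrm{id}$ on $M^s_{p,q}$, so $(R,S)$ is a coretraction/retraction pair on both endpoints of the interpolation couple.

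Next I would invoke the classical interpolation identity for weighted mixed-norm sequence spaces,
\[
    \bigl[\ell^{q_0}_{s_0}(\Z; L^{p_0}(\R^d)),\, \ell^{q_1}_{s_1}(\Z; L^{p_1}(\R^d))\bigr]_{\theta} = \ell^{q}_{s}(\Z; L^{p}(\R^d)),
\]
which is standard (Triebel, \emph{Interpolation Theory, Function Spaces, Differential Operators}; Bergh--L\"ofstr\"om). The hypothesis $q_0 \neq \infty$ or $q_1 \neq \infty$ is precisely what is required for complex interpolation on the outer $\ell^{q_i}$-level. Applying the standard retraction principle (Triebel, Section 1.2.4) to the pair $(R,S)$ then realizes $[M^{s_0}_{p_0,q_0}, M^{s_1}_{p_1,q_1}]_{\theta}$ as the image of $R$ inside $\ell^{q}_{s}(\Z; L^{p}(\R^d))$, which is in turn isomorphic to $M^{s}_{p,q}$ via $S$, with equivalence of norms.

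The main technical obstacle is the endpoint case where $p_0 = \infty$ or $p_1 = \infty$, which the theorem still permits. Complex interpolation of $L^\infty$ with itself is delicate in general, but on each frequency band the functions $\square_k f$ are Fourier-supported in a fixed compact set, so by Bernstein's inequality $L^\infty$ is comparable to $L^{p}$ for any finite $p$ on these bands, uniformly in $k$. This allows the inner $L^p$-interpolation to be carried out band by band and then summed via the outer $\ell^{q_i}$-level, where the assumption on the $q_i$ provides the content. The degenerate case $q_0 = q_1 = \infty$, for which the classical sequence-space interpolation would fail, is explicitly excluded by the hypothesis.
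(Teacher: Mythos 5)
The paper does not prove this theorem; it is quoted from the literature (the section opens by deferring all proofs to \cite{groechenig,wangbook,chaichenetsthesis}), and the standard proof in those references is exactly your retract argument: realize $M^{s}_{p,q}$ as a retract of $\ell^{q}_{s}(\Z;L^{p})$ via $R f=(\square_k f)_k$ and $S((g_k)_k)=\sum_k \widetilde\square_k g_k$, and quote the complex interpolation identity for weighted vector-valued sequence spaces, where the hypothesis $\min(q_0,q_1)<\infty$ is what makes the outer $\ell^{q}$-level work. So your strategy is the right one and coincides with the standard proof.

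Your final paragraph, however, contains a false step. Bernstein's inequality for a function whose Fourier transform lies in a fixed unit-scale band gives only the one-sided bound $\|\square_k f\|_{L^{\infty}}\lesssim\|\square_k f\|_{L^{p}}$; the reverse inequality $\|\square_k f\|_{L^{p}}\lesssim\|\square_k f\|_{L^{\infty}}$ fails for $p<\infty$ (take $f\equiv 1$, which is band-limited and bounded but lies in no $L^{p}$ with $p<\infty$). So $L^{\infty}$ is \emph{not} comparable to $L^{p}$ on frequency bands, and the proposed band-by-band reduction of the inner interpolation does not work as written. Fortunately it is also unnecessary: the scalar identity $[L^{p_0},L^{p_1}]_{\theta}=L^{p}$ is Calder\'on's classical result and holds for all $1\le p_0,p_1\le\infty$ on a $\sigma$-finite measure space, so the case $p_i=\infty$ causes no difficulty at the inner level. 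The only genuinely delicate point is the one you already identified correctly, namely the outer $\ell^{q_i}$-interpolation when one of the $q_i$ equals $\infty$, which is covered by the weighted vector-valued sequence-space theorem under the stated hypothesis. Delete the Bernstein paragraph and replace it with a citation of the scalar $L^{p}$ interpolation at the endpoint, and the proof is complete.
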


Finally, since the decomposition on the Fourier side is uniform, there is no neat scaling relation for modulation spaces. Estimates still hold (see Theorem 3.2. in \cite{corderookoudjou}) and we shall use the ones for $p = 2$:

\begin{lemma}\label{lemma:scaling}
    We have the scaling inequalities
    \begin{align*}
        \| \psi(\lambda \cdot)) \|_{M_{2, q}} \lesssim\left\{\begin{array}{ll}
\lambda^{-1/2} \|\psi\|_{M_{2, q}}, \quad \text { if } \quad 1 \leq q \leq 2 \\
\lambda^{1/q-1} \|\psi\|_{M_{2, q}}, \quad \text { if } \quad 2 \leq q \leq \infty
\end{array}\right.
    \end{align*}
    and
    \begin{align*}
        \| \psi(\lambda \cdot)) \|_{M_{2, q}} \gtrsim\left\{\begin{array}{ll}
\lambda^{1/q-1} \|\psi\|_{M_{2, q}}, \quad \text { if } \quad 1 \leq q \leq 2 \\
\lambda^{-1/2} \|\psi\|_{M_{2, q}}, \quad \text { if } \quad 2 \leq q \leq \infty
\end{array}\right.
    \end{align*}
    for all $\lambda \leq 1$ and $\psi \in M_{2,q}$. Similarly,
    \begin{align*}
        \| \psi \|_{M_{2, q}} \lesssim\left\{\begin{array}{ll}
\lambda^{1/2} \|\psi(\lambda \cdot))\|_{M_{2, q}}, \quad \text { if } \quad 1 \leq q \leq 2 \\
\lambda^{1-1/q} \|\psi(\lambda \cdot))\|_{M_{2, q}}, \quad \text { if } \quad 2 \leq q \leq \infty
\end{array}\right.
    \end{align*}
    and
    \begin{align*}
        \| \psi \|_{M_{2, q}} \gtrsim\left\{\begin{array}{ll}
\lambda^{1-1/q} \|\psi(\lambda \cdot))\|_{M_{2, q}}, \quad \text { if } \quad 1 \leq q \leq 2 \\
\lambda^{1/2} \|\psi(\lambda \cdot))\|_{M_{2, q}}, \quad \text { if } \quad 2 \leq q \leq \infty
\end{array}\right.
    \end{align*}
    for all $\lambda \geq 1$ and $\psi \in M_{2,q}$.
\end{lemma}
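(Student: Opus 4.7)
The plan is to reduce all four inequalities to a Plancherel computation and the elementary comparison of finite-dimensional $\ell^p$ norms. Writing $\psi_\lambda(x)=\psi(\lambda x)$ so that $\widehat{\psi_\lambda}(\xi)=\lambda^{-1}\hat\psi(\xi/\lambda)$, Plancherel together with the fact that $\sigma_k$ is bounded above on $\R$ and below on $Q_k$ gives
\[
\|\square_k\psi_\lambda\|_{L^2}^2 \;\sim\; \lambda^{-1}\!\int_\R |\sigma_k(\lambda\eta)|^2|\hat\psi(\eta)|^2\,d\eta \;\sim\; \lambda^{-1}\sum_{j\in A_k}\|\square_j\psi\|_{L^2}^2,
\]
where $A_k = \{j\in\Z : Q_j \cap \lambda^{-1}Q_k \neq\emptyset\}$. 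For $\lambda\leq 1$ a direct count shows $|A_k|\sim \lambda^{-1}$, while for each fixed $j$ the dual set $B_j = \{k : j\in A_k\}$ has cardinality $O(1)$.

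From this identity, each of the four inequalities with $\lambda\leq 1$ reduces to applying suitable power-sum estimates to $\bigl(\sum_{j\in A_k}\|\square_j\psi\|_{L^2}^2\bigr)^{q/2}$ and then summing in $k$. For the upper bound: when $q\leq 2$ I will use the subadditivity $(\sum a_j)^{q/2}\leq \sum a_j^{q/2}$ and the bound $|B_j|=O(1)$ to obtain the factor $\lambda^{-1/2}$; when $q\geq 2$, Jensen's inequality $(\sum a_j)^{q/2}\leq |A_k|^{q/2-1}\sum a_j^{q/2}$ costs the extra factor $\lambda^{-(q/2-1)}$ and yields $\lambda^{1/q-1}$. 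The lower bounds are obtained by reversing these inequalities: for $q\geq 2$ one has $(\sum a_j)^{q/2}\geq \sum a_j^{q/2}$ directly, while for $q\leq 2$ the concave Jensen $(\sum a_j)^{q/2}\geq |A_k|^{q/2-1}\sum a_j^{q/2}$ gives the factor $\lambda^{1/q-1}$. The two inequalities with $\lambda\geq 1$ then follow by setting $\varphi=\psi_\lambda$, $\mu=1/\lambda\leq 1$, so that $\psi=\varphi_\mu$, and applying the $\lambda\leq 1$ bounds to $\varphi$ with dilation parameter $\mu$.

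The only real obstacle is bookkeeping: the four sub-cases differ only in which direction of the $\ell^p$-embedding on the finite index set $A_k$ is used, and the two possible exponents $\lambda^{-1/2}$ and $\lambda^{1/q-1}$ agree exactly at $q=2$, so no matching issue arises at the interface. A little extra care is required near $\lambda=1$, where $|A_k|$ is no longer $\sim\lambda^{-1}$ but $\sim 1$; there all exponents collapse and the bounds become trivial. No tool deeper than Plancherel and finite-dimensional $\ell^p$-inclusions is needed, which is also why the lemma is only the $p=2$ specialisation of the general scaling result Theorem 3.2 of \cite{corderookoudjou} cited in the statement.
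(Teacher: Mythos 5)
Your proof is correct. Note that the paper itself gives no proof of this lemma: it is stated as the $p=2$ specialisation of Theorem 3.2 in the cited work of Cordero--Okoudjou, so your argument is a self-contained replacement rather than a variant of an argument in the text. The route you take is the natural one for $p=2$: Plancherel converts $\|\square_k\psi(\lambda\cdot)\|_{L^2}^2$ exactly into $\lambda^{-1}\int|\sigma_k(\lambda\eta)|^2|\hat\psi(\eta)|^2\,d\eta$, the support and lower-bound properties \eqref{eq:IDO} of the $\sigma_k$ reduce this to a block sum over an index set $A_k$ with $|A_k|\sim\lambda^{-1}$ and dual sets $B_j$ of bounded cardinality, and the four cases are precisely the two directions of Jensen's inequality and of $\ell^{q/2}$-subadditivity on a set of $\sim\lambda^{-1}$ elements. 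All eight exponents check out (including the reduction of the $\lambda\geq 1$ statements to the $\lambda\leq 1$ ones via $\mu=1/\lambda$), and the two candidate exponents coincide at $q=2$ as they must. Two cosmetic remarks: your worry near $\lambda=1$ is unfounded, since $|A_k|\sim\lambda^{-1}$ holds uniformly for all $\lambda\leq 1$ (both sides are then $\sim 1$); and the case $q=\infty$ should be phrased with suprema rather than sums, where the same counting gives $(\sum_{j\in A_k}a_j)^{1/2}\leq|A_k|^{1/2}\sup_j a_j^{1/2}$ for the upper bound and $(\sum_{j\in A_k}a_j)^{1/2}\geq\max_{j\in A_k}a_j^{1/2}$ for the lower bound, recovering the exponents $\lambda^{-1}$ and $\lambda^{-1/2}$. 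Compared with the cited general-$p$ theorem, your argument buys complete elementarity at the price of being tied to $p=2$, which is exactly the case the paper uses.
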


If $u$ is a solution of cubic NLS, then so is $u_{\lambda}(x, t)=\lambda^{-1} u\left(\lambda^{-1} x, \lambda^{-2} t\right)$ for all $\lambda \in (0,\infty)$. Choosing $\lambda \geq 1$ we find that
	\begin{align*}
		\| u_\lambda(x,\lambda^2 t) \|_{M_{2, q}} \lesssim\begin{cases}
            \lambda^{-\frac{1}{2}} \|u(x,t)\|_{M_{2, q}}, \quad &\text { if } \quad 1 \leq q \leq 2, \\
            \lambda^{-\frac{1}{q}} \|u(x,t)\|_{M_{2, q}}, \quad &\text { if } \quad 2 \leq q \leq \infty,
            \end{cases}
	\end{align*}
	and
	\begin{align*}
		\| u_\lambda(x,\lambda^2t) \|_{M_{2, q}} \gtrsim\begin{cases}
            \lambda^{-\frac{1}{q}} \|u(x,t)\|_{M_{2, q}}, \quad &\text { if } \quad 1 \leq q \leq 2, \\
            \lambda^{-\frac{1}{2}} \|u(x,t)\|_{M_{2, q}}, \quad &\text { if } \quad 2 \leq q \leq \infty.
            \end{cases}
	\end{align*}
	
	In particular as long as $q < \infty$ we are in a subcritical range with respect to scaling.

\section{Quantitative Wellposedness}\label{sec:qwp}

Following \cite{bejenaru-tao} we quickly introduce the notion of quantitative wellposedness. While it is just a reformulation of the standard Picard iteration for homogeneous algebraic nonlinearities in a more quantitative fashion, it gives us the means to simply show linear and multilinear estimates and immediately obtain well-posedness. Our focus of application lies on the cubic NLS \eqref{eq:NLS} on the real line,
\begin{equation*}
	\begin{cases}
		iu_t + u_{xx} = \pm 2|u|^2 u,\\
		u(0) = f,
	\end{cases}
\end{equation*}
though the notion applies basically to any semilinear evolution equation with multilinear nonlinearity.

\begin{definition}\label{def:QWP}
    Let $L$ be a linear and $N_k$ be a $k$-multilinear operator. The equation
    \[ u = Lf + N_k(u,\dots,u)\]
    is called quantitatively wellposed in the spaces $D, X$ if the two estimates
    \begin{align}
        \|Lf\|_X &\leq C_1 \|f\|_{D},\label{eq:QWP1}\\
        \|N_k(u_1,\dots,u_k)\|_X &\leq C_2 \prod_{i=1}^k \|u_i\|_{X}\label{eq:QWP2}
    \end{align}
    hold for some constants $C_1, C_2 > 0$.
\end{definition}

As a consequence of polarization identities for real symmetric multilinear operators \cite{thomas}, in order to show an estimate of the form \eqref{eq:QWP2}, or more generally
\[
    \|N_k(u_1,\dots,u_k)\|_X \lesssim \prod_{i=1}^k \|u_i\|_{Y},
\]
it is enough to show the estimate
\[
    \|N_k(u,\dots,u)\|_X \lesssim \|u\|_{Y}^k.
\]
Indeed it is not hard to see via polarization that this implies
\[
    \|N_k(u_1,\dots,u_k)\|_X \lesssim \sum_{i=1}^k \|u_i\|_{Y}^k,
\]
and now putting $u_i = s_i \tilde u_i$ with $\prod s_i = 1$ and minimizing over $s_i$ proves the claim. This shows that for symmetric multilinear nonlinearities and norms that are invariant under complex conjugation, the contraction property of the corresponding operator in the Banach fixed point argument usually follows from being a self-mapping. In a similar manner one proves that the estimate
\[
    \|N_k(u,\dots,u)\|_X \lesssim \prod_{i=1}^k\|u\|_{Y_i}
\]
implies the estimate
\[
    \|N_k(u_1,\dots,u_k)\|_X \lesssim \sum_{\sigma \in S_k}\prod_{i=1}^k\|u_{\sigma(i)}\|_{Y_i},
\]
where $S_k$ denotes the permutation group of order $k$.

Denote by $B^X(R)$ the ball of radius $R$ in the space $X$. The reason for Definition \ref{def:QWP} is the following:

\begin{theorem}\label{thm:QWP}
    Let the equation
    \begin{equation}\label{eq:generalequation}
        u = Lf + N_k(u,\dots,u)
    \end{equation} 
    be quantitatively wellposed. Then there exist $\epsilon > 0$ and $C_0 > 0$ such that for all $f \in B^D(\epsilon)$ there is a unique solution $u[f] \in B^X(C_0 \epsilon)$ to \eqref{eq:generalequation}. In particular, $u$ can be written as an $X$-convergent power series for $f \in B^D(\epsilon)$,
    \begin{equation}
        u[f] = \sum_{n=1}^\infty A_n(f),
    \end{equation}
    where $A_n$ is defined recursively by
    \begin{equation*}
        A_1(f) = Lf, \qquad A_n(f) = \sum_{n_1 + \dots + n_k = n} N_k(A_{n_1}(f), \dots ,A_{n_k}(f)),
    \end{equation*}
    and satisfies for some $C_1,C_2 >0$,
    \begin{equation*}
    		\begin{cases}
    			A_n(\lambda f) = \lambda^n A_n(f)\\
    			\|A_n(f) - A_n(g)\|_{X} \leq C_1^n\|f-g\|_{D}(\|f\|_D + \|g\|_D)^{n-1},\\
    			\|A_n(f)\|_X \leq C_2^n \|f\|_D^n.
    		\end{cases}
    \end{equation*}
\end{theorem}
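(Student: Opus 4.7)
The plan is to run a Banach fixed point argument on $u = T(u) := Lf + N_k(u,\dots,u)$ to get existence and uniqueness, and then to use a majorant/implicit function theorem bound to identify the unique solution with the power series $\sum A_n(f)$ and read off the remaining properties. For the fixed point step I would use the telescoping identity
\[
	N_k(u,\dots,u) - N_k(v,\dots,v) = \sum_{i=1}^k N_k(\underbrace{v,\dots,v}_{i-1},\, u-v,\, \underbrace{u,\dots,u}_{k-i})
\]
together with the multilinear bound \eqref{eq:QWP2}: on $B^X(R)$ this gives $\|T(u)-T(v)\|_X \leq kC_2 R^{k-1}\|u-v\|_X$ and $\|T(u)\|_X \leq C_1\|f\|_D + C_2 R^k$. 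Choosing $R$ with $kC_2 R^{k-1} \leq 1/2$ forces contraction, and then picking $\epsilon$ so that $R = 2C_1\epsilon$ makes $T$ send $B^X(R)$ into itself for every $f \in B^D(\epsilon)$. Banach fixed point yields the unique solution $u[f] \in B^X(C_0\epsilon)$ with $C_0 = 2C_1$.

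Next I would define the $A_n$ by the stated recursion (with the convention that the sum is empty, i.e.\ $A_n = 0$, whenever $2 \leq n < k$). Multilinearity of $N_k$ combined with linearity of $L$ gives $A_n(\lambda f) = \lambda^n A_n(f)$ by a direct induction. For the size estimate, set $a_n := \sup_{\|f\|_D \leq 1}\|A_n(f)\|_X$, so that $a_1 \leq C_1$ and
\[
	a_n \leq C_2 \sum_{n_1+\dots+n_k = n} a_{n_1}\cdots a_{n_k}.
\]
Taking equality in the analogous recursion produces a formal majorant $\Phi(z) = \sum_{n \geq 1}\alpha_n z^n$ satisfying $\Phi(z) = C_1 z + C_2 \Phi(z)^k$. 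Applying the implicit function theorem to $F(z,y) = y - C_1 z - C_2 y^k$ at $(0,0)$, the series $\Phi$ has positive radius of convergence, so $\alpha_n \leq C^n$ and hence $a_n \leq C^n$ for some $C$; renaming constants yields $\|A_n(f)\|_X \leq C_2^n \|f\|_D^n$.

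The final step is identification and the Lipschitz-type bound. Absolute convergence of $\sum A_n(f)$ in $X$ for $\|f\|_D$ small produces an element of $B^X(C_0\epsilon)$ (after possibly shrinking $\epsilon$); by substituting into $T$ and regrouping multilinear contributions according to total homogeneity degree it satisfies the fixed point equation, so uniqueness from the Banach step identifies it with $u[f]$. The difference estimate $\|A_n(f)-A_n(g)\|_X \leq C_1^n\|f-g\|_D(\|f\|_D+\|g\|_D)^{n-1}$ then follows by an analogous induction using the telescoping identity applied argumentwise, combined with the size bound above, and controlled by the same majorant construction. The main obstacle throughout is precisely the combinatorial blow-up in the multilinear expansion: the number of compositions of $n$ into $k$ positive parts grows like $\binom{n-1}{k-1}$, which would destroy any naive inductive bound, and the role of the implicit function theorem majorization is exactly to absorb these factors into a geometric constant.
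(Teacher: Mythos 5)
Your argument is correct and is essentially the standard proof: the paper does not actually prove Theorem \ref{thm:QWP} but imports it from the Bejenaru--Tao framework, whose proof is precisely your combination of the Banach fixed point theorem (via the telescoping multilinear identity) with the power-series expansion of the Picard iterates and a majorant-series argument to control the combinatorics of the recursion for $A_n$. The only points needing a word of care are the interchange of $N_k$ with the absolutely convergent sums in the identification step (justified by boundedness of $N_k$) and that the Lipschitz bound requires its own majorant recursion $b_n \leq C_2\sum\sum_i b_{n_i}\prod_{j\neq i}a_{n_j}$, both of which your sketch covers.
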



We will work in modulation spaces which do not admit homogeneous scaling, and are also above the scaling critical exponent for NLS. As a result, the bounds \eqref{eq:QWP1} and \eqref{eq:QWP2} will depend on the time variable $T$. This will show that a solution exists with guaranteed time of existence depending on $\|f\|_{D}$, and will result in a blow-up alternative later.

\begin{lemma}\label{lemma:blowupalternative}
    Let \eqref{eq:generalequation} be quantitatively wellposed in $D, X = X_T$, and assume that the constants in \eqref{eq:QWP1} respectively \eqref{eq:QWP2} are
    \[C_1 = c_1 \langle T\rangle^{\alpha_1}, \qquad C_2 = c_2 T^{\alpha_2}\langle T\rangle^{\alpha_3}.\]
    Then we may choose
    \[T \sim \min\big(\varepsilon^{-\beta_1},\varepsilon^{-\beta_2}\big), \qquad \beta_1 = \frac{k-1}{(k-1)\alpha_1 + \alpha_2 + \alpha_3},\qquad \beta_2 = \frac{k-1}{\alpha_2},\]
    as a guaranteed time of existence.
\end{lemma}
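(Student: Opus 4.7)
The plan is to run the standard contraction mapping argument behind Theorem \ref{thm:QWP}, but now carefully tracking how the constants depend on $T$, and then to optimize the length of the time interval.

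Define the solution map $\Phi(u) = Lf + N_k(u,\dots,u)$ on the closed ball $B^{X_T}(R) \subset X_T$. From \eqref{eq:QWP1} and \eqref{eq:QWP2}, if $\|f\|_D \leq \varepsilon$ then
\[
    \|\Phi(u)\|_{X_T} \leq C_1 \varepsilon + C_2 R^k,
\]
so $\Phi$ is a self-map of $B^{X_T}(R)$ as soon as $C_1 \varepsilon + C_2 R^k \leq R$. The natural choice is $R = 2 C_1 \varepsilon$, which reduces the self-mapping condition to the single smallness inequality
\[
    2^k\, C_2\, C_1^{k-1}\, \varepsilon^{k-1} \leq 1.
\]

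For the contraction bound I use the polarization remark immediately after Definition \ref{def:QWP}, which upgrades the diagonal estimate \eqref{eq:QWP2} into the multilinear form and in particular yields
\[
    \|N_k(u,\dots,u) - N_k(v,\dots,v)\|_{X_T} \lesssim C_2\,(\|u\|_{X_T}+\|v\|_{X_T})^{k-1}\,\|u-v\|_{X_T}.
\]
On $B^{X_T}(R)$ with $R = 2C_1\varepsilon$ this gives contractivity under exactly the same condition $C_2 C_1^{k-1} \varepsilon^{k-1} \lesssim 1$, up to a constant that can be absorbed into the $2^k$ above. Banach's fixed point theorem then produces the unique solution $u \in B^{X_T}(R)$.

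It remains to read off the guaranteed lifespan. Substituting $C_1 = c_1 \langle T\rangle^{\alpha_1}$ and $C_2 = c_2 T^{\alpha_2}\langle T\rangle^{\alpha_3}$, the smallness condition becomes
\[
    c_1^{k-1} c_2 \, T^{\alpha_2}\,\langle T\rangle^{(k-1)\alpha_1 + \alpha_3}\, \varepsilon^{k-1} \lesssim 1.
\]
I then split into the two regimes. If $T \leq 1$, then $\langle T\rangle \sim 1$ and the condition collapses to $T^{\alpha_2}\varepsilon^{k-1} \lesssim 1$, i.e.\ $T \lesssim \varepsilon^{-\beta_2}$. If $T \geq 1$, then $\langle T\rangle \sim T$ and the condition becomes $T^{(k-1)\alpha_1 + \alpha_2 + \alpha_3}\varepsilon^{k-1} \lesssim 1$, i.e.\ $T \lesssim \varepsilon^{-\beta_1}$. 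Choosing $T \sim \min(\varepsilon^{-\beta_1},\varepsilon^{-\beta_2})$ covers both regimes simultaneously: the $\varepsilon^{-\beta_1}$ branch is active for small data (long times, $T \geq 1$) and the $\varepsilon^{-\beta_2}$ branch for large data (short times, $T \leq 1$). The only real obstacle is bookkeeping—matching the two regimes at $T = 1$ and absorbing the polarization constant into the implicit constants—since no analytic input beyond the quantitative wellposedness hypothesis is needed.
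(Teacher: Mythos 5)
Your proposal is correct and follows essentially the same route as the paper: the same solution map $\Phi$, the same ball radius $R = 2C_1\varepsilon$ leading to the smallness condition $2^k C_2 C_1^{k-1}\varepsilon^{k-1}\lesssim 1$, and the same case split $\langle T\rangle\sim T$ versus $\langle T\rangle\sim 1$ to extract $\beta_1$ and $\beta_2$. The only difference is that you spell out the Lipschitz/contraction bound via polarization, which the paper leaves as an aside.
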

\begin{proof}
	If $\Phi(u) = Lf + N_k(u,\dots,u)$, then \eqref{eq:QWP1} and \eqref{eq:QWP2} give
	\[
		\|\Phi(u)\|_{X} \leq C_1\varepsilon + C_2 (C_0\varepsilon)^{k},
	\]
	which has to be smaller than $C_0\varepsilon$ for a contraction on $B^X(C_0\varepsilon)$. Taking $C_0 = 2C_1$ we need that
	\[
		2C_2 (2C_1\varepsilon)^{k-1} < 1,
	\]
	which amounts to
	\[
		T^{\alpha_2}\langle T\rangle^{\alpha_3 + \alpha_1(k-1)}\varepsilon^{k-1} \lesssim 1.
	\]
	When $\varepsilon$ is small, we can make $T$ large and $T \sim \langle T\rangle$ so that $\beta_1$ is the relevant exponent for $T$. When $\varepsilon$ is large, $\langle T \rangle \sim 1$ and we arrive at $\beta_2$. It is not hard to see that this also guarantees the Lipschitz bound to hold, and we obtain a contraction.
\end{proof}

We apply this general setting to cubic NLS and obtain:

\begin{definition}
    Let $D$ a Banach space of functions and let $S(t) = e^{it\partial_x^2}$. We call a function $u \in X_T \subset C^0([0,T],D)$ a (mild) solution of NLS if it solves the fixed point equation
    \begin{equation}\label{eq:fpnls}
        u = S(t)u_0 \mp 2i \int_0^t S(t-\tau)(|u|^2 u)(\tau) \, d\tau
    \end{equation}
    in $X_{T}$. The supremum of all such $T$ is called maximal time of existence and denoted by $T^*$.
\end{definition}

In the following we use the notation
\[
	N(u_1,u_2,u_3) = N_3(u_1,u_2,u_3) = 2i \int_0^t S(t-\tau)(u_1\bar u_2 u_3)(\tau) \, d\tau,
\]
and note that all local results we prove hold for both the focusing (minus sign in \eqref{eq:NLS}) and the defocusing (plus sign in \eqref{eq:NLS}) equation.

\begin{corollary}\label{cor:blowupalternative}
    Consider the Cauchy problem \eqref{eq:NLS} with initial data $f = u_0$ in a Banach space $D$. If the bounds
    \begin{align}
        \|S(t) u_0 \|_{X_T} &\lesssim \langle T\rangle^{\alpha_1}\|u_0\|_{D},\\
        \Big\|\int_0^t S(t-\tau)(u_1 \bar u_2 u_3)(\tau) \, d\tau\Big\|_{X_T} &\lesssim T^{\alpha_2}\langle T\rangle^{\alpha_3}\prod_{i=1}^3 \|u_i\|_{X_T},
    \end{align}
    hold for some $\alpha_1, \alpha_2,\alpha_3 > 0$, then for all $R > 0$ and $u_0 \in B^D(R)$ there exists $T > 0$ such that for all $T' < T$ there exists a unique solution $u \in X_{T'}$ to \eqref{eq:fpnls}. Moreover, the blowup-alternative
    \begin{equation}
        T^{*}<\infty \quad \Rightarrow \quad \limsup _{t \nearrow T^{*}}\|u(\cdot, t)\|_{D}=\infty
    \end{equation}
    holds.
\end{corollary}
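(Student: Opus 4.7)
The plan is to deduce both parts of the corollary directly from the abstract framework of Theorem \ref{thm:QWP} and Lemma \ref{lemma:blowupalternative}. I would take $L = S(t)$, $k=3$, and let $N_3$ denote (up to the sign $\mp$) the Duhamel operator $N(u_1, u_2, u_3)$ introduced just before the corollary. The hypothesized linear bound is precisely \eqref{eq:QWP1} with $C_1 \lesssim \langle T\rangle^{\alpha_1}$, and the trilinear bound is \eqref{eq:QWP2} with $C_2 \lesssim T^{\alpha_2}\langle T\rangle^{\alpha_3}$, so that the mild equation \eqref{eq:fpnls} falls squarely into the quantitative wellposedness framework. Applying Lemma \ref{lemma:blowupalternative} with $\varepsilon = R$ and $k=3$ then yields, for every $u_0 \in B^D(R)$, a guaranteed existence time $T \sim \min(R^{-\beta_1}, R^{-\beta_2})$ and a unique fixed point of \eqref{eq:fpnls} in an $X_{T'}$-ball of radius proportional to $R$ for each $T' < T$.

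For the blow-up alternative I would argue by contradiction. Assume $T^* < \infty$ while $M := \limsup_{t \nearrow T^*}\|u(\cdot,t)\|_D < \infty$. The crucial observation is that the Schrödinger group and the Duhamel integral are translation-invariant in time and the space $X_T$ depends only on the length $T$ of the interval, so the local existence result applies verbatim to any shifted Cauchy problem with initial time $t_0 \in [0, T^*)$ and datum $u(\cdot, t_0)$. In particular the guaranteed lifespan $\delta = \delta(\|u(\cdot, t_0)\|_D)$ depends only on the size of the initial datum and on the structural constants $\alpha_i$, not on $t_0$ itself. Fixing the radius $2M$ with associated lifespan $\delta(2M) > 0$, I would pick $t_0 < T^*$ with $\|u(\cdot, t_0)\|_D \leq 2M$ and $t_0 > T^* - \delta(2M)/2$. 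The shifted local theory then produces a mild solution on $[t_0, t_0 + \delta(2M)]$ whose right endpoint strictly exceeds $T^*$; uniqueness on the overlap $[t_0, T^*)$ identifies it with $u$, and concatenation extends $u$ past $T^*$, contradicting the maximality of $T^*$.

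The main point to be careful about is organizational rather than analytic: verifying that $\delta$ is genuinely uniform in the starting time $t_0$. This hinges on the autonomy of NLS, on $X_T$ being defined through the length of the time interval alone (as is the case for all modulation-space-valued solution spaces employed later in the paper), and on the continuity $t \mapsto u(\cdot, t) \in D$ inherited from the embedding $X_T \subset C^0([0,T], D)$, which is what allows the shifted solution and $u$ to be identified on the common interval before concatenation. None of these points require new estimates; they amount only to unpacking the definitions, so no substantial obstacle is expected.
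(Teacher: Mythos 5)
Your proposal is correct and follows essentially the same route as the paper: existence and uniqueness via the quantitative wellposedness framework of Theorem \ref{thm:QWP} combined with Lemma \ref{lemma:blowupalternative} to get a lifespan depending only on $\|u_0\|_D$, and the blow-up alternative by restarting the flow near $T^*$ with a uniform lifespan to contradict maximality. Your write-up is in fact more careful than the paper's terse argument about the time-translation invariance and the identification of the shifted solution with $u$ on the overlap.
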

\begin{proof}
	The existence and uniqueness follow from Theorem \ref{thm:QWP}. Assuming that $\|u(T)\|_{D} \leq C <  \infty$ with $T$ arbitrarily close to $T^*$, the assumptions from Lemma \ref{lemma:blowupalternative} are satisfied, hence there exists a small $\delta > 0$ such that \eqref{eq:NLS} can be solved on $[T^*,T^*+\delta)$, which contradicts the maximality.
\end{proof}

\section{Local Wellposedness via Multilinear Interpolation}\label{sec:lwp}
\subsection{The triangle $1/q \geq \max(1/p',1/p)$}

We recall the Strichartz estimates which lead to local wellposedness of \eqref{eq:NLS} in $L^2(\R)$ (see e.g. \cite{tao}).

\begin{lemma}[Strichartz estimates]
    Let $p \geq 2$. The following hold true:
    \begin{align}
        \|S(t)f\|_{L^p} &\lesssim |t|^{1/2-1/p}\|f\|_{L^{p'}},\\
        \|S(t)f\|_{L^2} &= \|f\|_{L^{2}}.
    \end{align}
    Moreover, call $(q,p)$ admissible if $2/q = 1/2-1/p$, $2 \leq p,q \leq \infty$ For all $(q,p)$ and $(\tilde q, \tilde p)$ admissible we have
    \begin{align}
        \|S(t)f\|_{L^q_t L^p_x} &\lesssim \|f\|_{L^2},\label{eq:strichartz}\\
        \|\int_0^t S(t-s)f(s)\|_{L^q_t L^p_x} &\lesssim \|f\|_{L^{\tilde q'}_t L^{\tilde p '}_x}.\label{eq:inhomstrichartz}
    \end{align}
\end{lemma}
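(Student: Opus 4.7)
The plan is to carry out the classical three-step proof: an explicit dispersive bound, a $TT^\ast$ argument for the homogeneous Strichartz estimate, and the Christ-Kiselev lemma for the retarded estimate. I would start from the explicit kernel of the free Schrödinger propagator in one dimension,
$$S(t)f(x) = \frac{1}{\sqrt{4\pi i t}}\int_\R e^{i(x-y)^2/(4t)} f(y)\,dy,$$
which immediately gives the $L^1 \to L^\infty$ dispersive bound with prefactor $|t|^{-1/2}$. The $L^2$ conservation $\|S(t)f\|_{L^2} = \|f\|_{L^2}$ is just Plancherel applied to the Fourier multiplier $e^{-it\xi^2}$ of modulus one. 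Riesz-Thorin interpolation between these two endpoints produces the full $L^{p'} \to L^p$ dispersive estimate for $2 \leq p \leq \infty$.

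For \eqref{eq:strichartz} I would use $TT^\ast$. Setting $Tf = S(t) f$ as a candidate operator $L^2_x \to L^q_t L^p_x$, its adjoint is $T^\ast F = \int_\R S(-s)F(s)\,ds$, so that $TT^\ast F(t) = \int_\R S(t-s)F(s)\,ds$ using the group property $S(t)S(-s) = S(t-s)$. Boundedness of $T$ is then equivalent to boundedness of $TT^\ast$ from $L^{q'}_t L^{p'}_x$ to $L^q_t L^p_x$. Applying the dispersive bound pointwise in $(t,s)$ reduces the problem to a convolution in time against the kernel $|t|^{-(1/2-1/p)}$, and Hardy-Littlewood-Sobolev closes the bound precisely when $1 - 2/q = 1/2 + 1/p$, which is exactly the admissibility relation $2/q = 1/2 - 1/p$.

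For the inhomogeneous estimate \eqref{eq:inhomstrichartz} with (possibly distinct) admissible pairs $(q,p)$ and $(\tilde q, \tilde p)$, I would write
$$\int_\R S(t-s)F(s)\,ds = S(t)\int_\R S(-s)F(s)\,ds,$$
factoring the untruncated Duhamel integral as the dual Strichartz estimate for $(\tilde q, \tilde p)$ composed with the homogeneous one for $(q,p)$. The retarded time truncation $s<t$ present in \eqref{eq:inhomstrichartz} is then transferred from this untruncated identity by the Christ-Kiselev lemma.

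The main technical point is the endpoint behavior: Hardy-Littlewood-Sobolev degenerates when the convolution kernel sits at critical integrability, and Christ-Kiselev requires a strict inequality between the relevant Lorentz indices. In 1D the only such obstruction is the Keel-Tao endpoint $(q,p) = (4,\infty)$, which is not actually used in the applications of this lemma in subsequent sections of the paper, so the classical HLS-plus-Christ-Kiselev route suffices throughout the required range and no delicate endpoint refinement is needed.
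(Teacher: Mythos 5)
The paper offers no proof of this lemma, simply citing \cite{tao}; your argument --- the explicit kernel giving the $L^1\to L^\infty$ dispersive bound, Riesz--Thorin to reach $L^{p'}\to L^p$, the $TT^\ast$ reduction closed by Hardy--Littlewood--Sobolev under exactly the admissibility relation $2/q=1/2-1/p$, and Christ--Kiselev for the retarded integral --- is precisely the standard proof from that reference and is correct. Two minor remarks: the exponent in the paper's first display should be $|t|^{-(1/2-1/p)}$ as your version implicitly has it, and your closing caveat is slightly off, since the pair $(q,p)=(4,\infty)$ \emph{is} used later (the iteration space $L^4_tL^\infty_x$) but is not a genuine obstruction in one dimension: the HLS step there asks for $|t|^{-1/2}\ast:L^{4/3}_t\to L^4_t$, which is a non-endpoint instance, and Christ--Kiselev applies because $\tilde q'=4/3<4=q$.
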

Recall how this allows to prove local (and due to $L^2$ conservation also global) wellposedness of cubic NLS in $L^2(\R)$ by a fixed point argument: Let $X_T = L^\infty_t L^2_x([0,T]\times \R) \cap L^4_t L^\infty_x([0,T]\times \R)$. Then from Hölder's inequality,
\begin{align*}
    \|N(u_1,u_2,u_3)\|_{X_T} &= \|\int_0^t S(t-s) (u_1 \bar u_2 u_3)(s)ds\|_{X_T}\\
    &\lesssim \|u_1 \bar u_2 u_3 \|_{L^{8/7}_t L^{4/3}_x} \lesssim T^{1/2} \prod_{i=1}^3 \|u_i\|_{X_T}.
\end{align*}

Corollary \ref{cor:blowupalternative} together with $L^2$-conservation then gives global wellposedness in $L^2(\R)$. The space $L^\infty_t L^2_x([0,T]\times \R) \cap L^8_t L^4_x([0,T]\times \R)$ would have been enough for the iteration of the trilinear term, too.

The following estimates for the Schrödinger propagator in modulation spaces hold and are optimal with respect to the time dependence of the constant. A first version of them are proven in \cite{wangzhaoguo} in the case $p = 2$ which \cite{benyigroechenig} then extended for $p,q \in [1,\infty]$. Sharpness of the exponent for $p \in [1,2]$ was proven in \cite{corderonicola} and extended to $p \in [1,\infty]$ in \cite[Theorem 3.4]{chaichenetsthesis}.

\begin{lemma}
    Let $1 \leq p \leq \infty$ and $1 \leq q \leq \infty$. The following hold true:
    \begin{align}
        \|S(t)f\|_{M_{p,q}} &\lesssim (1+|t|)^{1/2} \|f\|_{M_{p,q}},\label{eq:modulationstrichartz1}\\
        \|S(t)f\|_{M_{p,q}} &\lesssim (1+|t|)^{-(1/2-1/p)} \|f\|_{M_{p',q}}, \quad \text{for} \quad p \geq 2,\label{eq:modulationstrichartz2}\\
        \|S(t)f\|_{M_{2,q}} &= \|f\|_{M_{2,q}},\label{eq:modulationstrichartz3}\\
        \|S(t)f\|_{M_{p,q}} &\lesssim (1+|t|)^{|1/2-1/p|} \|f\|_{M_{p,q}} \label{eq:modulationstrichartz4}.
    \end{align}
\end{lemma}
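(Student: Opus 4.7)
The plan is to reduce each of the four estimates to a frequency-localized bound for the Schrödinger propagator on a unit ball, and then assemble them by taking the $\ell^q$ norm in the frequency decomposition. Since $\square_k$ and $S(t) = \F^{-1} e^{-it\xi^2} \F$ are both Fourier multipliers they commute, so $\square_k S(t) f = S(t) \square_k f$. It therefore suffices to prove bounds of the form $\|S(t)\square_k f\|_{L^p} \lesssim C(t) \|\square_k f\|_{L^{p_0}}$ with $C(t)$ independent of $k$, after which the $\ell^q_k$ norm yields the claim.

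The first step is a Galilean change of variables to eliminate the dependence on $k$. For $g_k = \square_k f$, whose Fourier transform is supported in $[k-1,k+1]$, set $h_k(x) = e^{-ikx} g_k(x)$, so that $\hat h_k$ is supported in $[-1,1]$. Writing $\xi = k + \eta$ in the symbol $e^{-it\xi^2}$ one finds
\[
    (S(t) g_k)(x) = e^{-itk^2}\, e^{ikx}\, (S(t) h_k)(x - 2tk),
\]
so $\|S(t) g_k\|_{L^p} = \|S(t) h_k\|_{L^p}$ and $\|h_k\|_{L^p} = \|g_k\|_{L^p}$. The problem is reduced to bounding $S(t)$ applied to functions with frequency support in the fixed unit interval $[-1,1]$.

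Next I would pick a real $\phi \in \Sc(\R)$ with $\phi \equiv 1$ on $[-1,1]$ and $\supp \phi \subset [-2,2]$, so $\phi(D) h_k = h_k$, and analyze the kernel
\[
    K_t(x) = \int_\R e^{ix\xi - it\xi^2}\, \phi(\xi)\, d\xi
\]
of $S(t)\phi(D)$ by stationary phase: the phase has its unique nondegenerate critical point at $\xi_0 = x/(2t)$, which lies in $\supp\phi$ only when $|x| \lesssim |t|$; inside this region stationary phase gives $|K_t(x)| \lesssim (1+|t|)^{-1/2}$, and outside it repeated integration by parts yields rapid decay in $|x|/|t|$. Hence
\[
    \|K_t\|_{L^\infty} \lesssim (1+|t|)^{-1/2}, \qquad \|K_t\|_{L^1} \lesssim (1+|t|)^{1/2}.
\]
The four estimates now follow: Young's inequality with $\|K_t\|_{L^1}$ yields \eqref{eq:modulationstrichartz1}; Plancherel gives \eqref{eq:modulationstrichartz3}; interpolating the $L^1 \to L^\infty$ dispersive bound with the $L^2$ isometry gives \eqref{eq:modulationstrichartz2}; and Riesz--Thorin interpolation between the $L^\infty \to L^\infty$ bound $(1+|t|)^{1/2}$ and the $L^2 \to L^2$ isometry gives \eqref{eq:modulationstrichartz4} for $p \geq 2$, while duality, using that the adjoint of $S(t)\phi(D)$ on $L^p$ is $S(-t)\phi(D)$ since $\phi$ is real, extends \eqref{eq:modulationstrichartz4} to $p \in [1,2]$ with the symmetric exponent $|1/2 - 1/p|$. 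All constants are uniform in $k$, so taking $\ell^q_k$ finishes.

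The only real obstacle is the kernel bound $\|K_t\|_{L^1} \lesssim (1+|t|)^{1/2}$, which needs to interpolate between the small-$t$ regime (where $K_t$ is close to the Schwartz function $\check\phi$ and has bounded $L^1$ norm) and the large-$t$ regime (where $K_t$ is pointwise of size $|t|^{-1/2}$ on an essentially $|t|$-sized interval, giving $L^1$ norm $\sim |t|^{1/2}$). Everything else is standard interpolation, Plancherel, and duality.
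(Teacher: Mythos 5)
Your proof is correct. The paper does not actually prove this lemma---it cites Wang--Zhao--Guo and B\'enyi--Gr\"ochenig--Okoudjou--Rogers for \eqref{eq:modulationstrichartz1}--\eqref{eq:modulationstrichartz3} and only remarks that \eqref{eq:modulationstrichartz4} follows by interpolating \eqref{eq:modulationstrichartz1} at $p=1,\infty$ with \eqref{eq:modulationstrichartz3}---but your argument (commuting $S(t)$ with $\square_k$, Galilean reduction to frequency support in $[-1,1]$, the stationary-phase kernel bounds $\|K_t\|_{L^\infty}\lesssim(1+|t|)^{-1/2}$ and $\|K_t\|_{L^1}\lesssim(1+|t|)^{1/2}$, then Young, Plancherel, interpolation and duality) is precisely the standard proof underlying those references, and all steps check out.
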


Note that \eqref{eq:modulationstrichartz4} is obtained by interpolating between \eqref{eq:modulationstrichartz1} with $p = 1, \infty$ and \eqref{eq:modulationstrichartz3}.

By Corollary \ref{cor:blowupalternative} we obtain local wellposedness in $M_{p,1}$ for all $1 \leq p \leq \infty$ with $X_T = C^0 M_{p,1}([0,T]\times \R)$ due to the trivial estimate
\begin{align*}
    \|N(u_1,u_2,u_3)\|_{X_T} &\lesssim \|\int_0^t S(t-s) (|u|^2 u)(s)ds\|_{X_T}\\
    &\lesssim T(1+T)^{|1/2-1/p|}\|u\|_{X_T}^3,
\end{align*}
which follows from the Banach algebra property of $M_{p,1}$.

Starting from the estimates for $M_{p,1}$ and $L^2 = M_{2,2}$ we use multilinear interpolation to obtain new local wellposedness results. The range of $p,q$ that can be reached as line segments between points $(1/p,1)$ and $(1/2,1/2)$ is exactly the triangle $1 \leq q \leq 2$, $1/q \geq \max(1/p',1/p)$, and this is where this simple multilinear interpolation works.


\begin{theorem}[\cite{berghloefstroem}, 4.4.1]\label{thm:multilinearinterpolation}
    Let $(A^{\nu}_0, A^{\nu}_1)_{(\nu = 1,\dots,n)}$ and $(B_0,B_1)$ be compatible Banach couples. Let $N: \sum_{1 \leq v \leq n}^{\oplus} A_0^\nu \cap A_1^\nu  \rightarrow B_0 \cap B_1$ be multilinear such that
    \begin{equation*}
        \begin{array}{l}
            \left\|N\left(a_{1}, \ldots, a_{n}\right)\right\|_{B_{0}} \leqslant M_{0} \prod_{\nu=1}^{n}\left\|a_{\nu}\right\|_{A_{0}^{\nu}}, \\
            \left\|N\left(a_{1}, \ldots, a_{n}\right)\right\|_{B_{1}} \leqslant M_{1} \prod_{\nu=1}^{n}\left\|a_{\nu}\right\|_{A_{1}^{\nu}}.
        \end{array}
    \end{equation*}
    Then $T$ can be uniquely extended to a multilinear mapping $\sum_{1 \leq v \leq n}^{\oplus} [A_0^\nu, A_1^\nu]_\theta  \rightarrow [B_0, B_1]_\theta$ with norm at most $M_0^{1-\theta} M_1^{\theta}$.
\end{theorem}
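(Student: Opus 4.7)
The plan is to use the Calderón complex interpolation method directly. Recall that $[A_0,A_1]_\theta$ is defined via the space $\mathcal{F}(A_0,A_1)$ of functions $f$ from the closed strip $\bar S = \{z : 0 \leq \real(z) \leq 1\}$ into $A_0+A_1$ which are analytic in the open strip, continuous and bounded on $\bar S$, and whose traces $t\mapsto f(j+it)$ take values in $A_j$ and are bounded there ($j = 0,1$), with norm $\|f\|_{\mathcal{F}} = \max_{j=0,1}\sup_t \|f(j+it)\|_{A_j}$. The interpolation norm is $\|a\|_\theta = \inf\{\|f\|_{\mathcal{F}} : f(\theta) = a\}$. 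The proof proceeds by lifting inputs to analytic functions on the strip and applying $N$ pointwise.

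Concretely, fix $a_\nu \in [A_0^\nu, A_1^\nu]_\theta$ and $\varepsilon > 0$. For each $\nu$, choose $f_\nu \in \mathcal{F}(A_0^\nu, A_1^\nu)$ with $f_\nu(\theta) = a_\nu$ and $\|f_\nu\|_{\mathcal{F}} \leq \|a_\nu\|_{[A_0^\nu,A_1^\nu]_\theta} + \varepsilon$. Then I would define the auxiliary function
\[
    F(z) = M_0^{z-1} M_1^{-z}\, N\bigl(f_1(z), \ldots, f_n(z)\bigr),
\]
where $M_0^{z-1}M_1^{-z}$ is a scalar holomorphic factor whose modulus equals $M_0^{-1}$ on $\real(z)=0$ and $M_1^{-1}$ on $\real(z)=1$. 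Multilinearity of $N$ together with analyticity and boundedness of each $f_\nu$ yields that $F\colon\bar S \to B_0+B_1$ is analytic on $S$, continuous and bounded on $\bar S$. Using the hypothesized endpoint bounds on $N$,
\[
    \|F(it)\|_{B_0} \leq M_0^{-1}\,M_0\prod_\nu \|f_\nu(it)\|_{A_0^\nu} \leq \prod_\nu \|f_\nu\|_{\mathcal{F}},
\]
and similarly $\|F(1+it)\|_{B_1} \leq \prod_\nu \|f_\nu\|_{\mathcal{F}}$. Hence $F \in \mathcal{F}(B_0,B_1)$ with $\|F\|_{\mathcal{F}} \leq \prod_\nu \|f_\nu\|_{\mathcal{F}}$.

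Evaluating at $z = \theta$ gives $N(a_1,\ldots,a_n) = M_0^{1-\theta} M_1^{\theta}\, F(\theta)$, whence
\[
    \|N(a_1,\ldots,a_n)\|_{[B_0,B_1]_\theta} \leq M_0^{1-\theta} M_1^\theta \prod_\nu \bigl(\|a_\nu\|_{[A_0^\nu,A_1^\nu]_\theta} + \varepsilon\bigr).
\]
Letting $\varepsilon \to 0$ yields the asserted norm bound. The same estimate shows that the value $N(a_1,\ldots,a_n)$ obtained this way is independent of the chosen extensions $f_\nu$ (differences map to zero), so $N$ extends uniquely and multilinearly to $\prod_\nu [A_0^\nu,A_1^\nu]_\theta \to [B_0,B_1]_\theta$.

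The step I expect to require the most care is verifying that $z \mapsto N(f_1(z),\ldots,f_n(z))$ is genuinely analytic as a $(B_0+B_1)$-valued function and bounded on $\bar S$. Analyticity reduces via multilinearity to the scalar case by pairing with functionals in $(B_0+B_1)^*$, using that each $f_\nu$ is weakly analytic and $N$ is bounded on the intersections (so continuous on bounded sets in $\prod_\nu (A_0^\nu\cap A_1^\nu)$). Boundedness on the strip interior follows from a Phragmén--Lindelöf argument applied to $F$, given the uniform bounds on the two boundary lines; a standard regularization $F_\delta(z) = e^{\delta(z^2-\theta^2)} F(z)$ can be inserted if one wants decay as $|\im z|\to\infty$ before passing to the limit $\delta\to 0$.
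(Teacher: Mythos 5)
The paper does not prove this statement; it is imported verbatim from Bergh--L\"ofstr\"om, Theorem 4.4.1, and your argument is essentially the standard complex-method proof one finds there: lift the inputs to admissible analytic functions on the strip, compose with $N$, normalize by the scalar factor $M_0^{z-1}M_1^{-z}$, and read off the bound at $z=\theta$. The skeleton is right and the boundary estimates are computed correctly.

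There is, however, one genuine gap, and it sits exactly where the word ``extended'' in the statement comes from. The operator $N$ is only defined on $\prod_\nu (A_0^\nu\cap A_1^\nu)$, whereas a general $f_\nu\in\mathcal F(A_0^\nu,A_1^\nu)$ takes values in the sum $A_0^\nu+A_1^\nu$ at interior points of the strip (only the boundary traces land in $A_0^\nu$ or $A_1^\nu$, and even those need not lie in the intersection). So the expression $N(f_1(z),\dots,f_n(z))$, which your whole construction rests on, is simply not defined for the near-optimal extensions you select. The standard repair is to first take $a_\nu\in A_0^\nu\cap A_1^\nu$ and choose extensions from the dense subclass $\mathcal F_0$ of functions of the form $e^{\delta z^2}\sum_k e^{\lambda_k z}a_{\nu,k}$ with $a_{\nu,k}\in A_0^\nu\cap A_1^\nu$; these take values in the intersection, so $F$ is well defined, finitely generated, and manifestly analytic and bounded (your Phragm\'en--Lindel\"of and weak-analyticity worries largely evaporate for this class). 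One obtains the bound
\[
	\|N(a_1,\dots,a_n)\|_{[B_0,B_1]_\theta}\leq M_0^{1-\theta}M_1^{\theta}\prod_{\nu}\|a_\nu\|_{[A_0^\nu,A_1^\nu]_\theta}
\]
for $a_\nu$ in the intersections, using that $\mathcal F_0$ computes the interpolation norm up to $\varepsilon$. Only then does one invoke density of $A_0^\nu\cap A_1^\nu$ in $[A_0^\nu,A_1^\nu]_\theta$ (a basic fact of the complex method, valid since the couples are compatible) together with multilinearity to extend $N$ uniquely and continuously to the product of the interpolation spaces. Your closing remark about independence of the chosen extensions is then not needed: uniqueness of the extension follows from density and the uniform bound, not from comparing liftings. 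With this restructuring your proof is complete and coincides with the one in the cited reference.
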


\begin{theorem}\label{thm:lwpuppertriangle}
    Let $1 \leq q \leq 2$, $1/q \geq \max(1/p',1/p)$. Then for any initial data $u_0 \in M^{p, q}$, there is a $T > 0$ and a unique solution $u$ to \eqref{eq:NLS} in
    \begin{equation}
        X^{p,q}_T = L^\infty_t M_{p, q}([0,T]\times\R) \cap L^{8/\theta}_t [M_{\tilde p,1},L^4]_\theta([0,T]\times \R).
    \end{equation}
    Here, the numbers $\theta \in [0,1]$ and $\tilde p \in [1,\infty]$ are determined by $1/ p = (1-\theta)/\tilde p + \theta/2$ and $1/ q = 1- \theta/2$. Moreover, either the solution $u$ exists globally in time, or there is $T^* < \infty$ such that
    \[
		\limsup_{t \to T^*} \|u(t)\|_{M_{p,q}} = \infty.
    \]
\end{theorem}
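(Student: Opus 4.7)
The plan is to verify the hypotheses of Corollary~\ref{cor:blowupalternative} by multilinear interpolation between two endpoints. At the ``algebra'' endpoint ($\theta = 0$, corresponding to $(p,q) = (\tilde p, 1)$) the solution space will be $L^\infty_t M_{\tilde p, 1}$, and at the ``Strichartz'' endpoint ($\theta = 1$, corresponding to $(p,q) = (2,2)$) it will be $L^\infty_t L^2 \cap L^8_t L^4$. Interpolation between these will produce the stated $X^{p,q}_T$, and the ranges of $\tilde p \in [1,\infty]$ and $\theta \in [0,1]$ sweep out exactly the triangle described.

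First I will collect the endpoint estimates. At $\theta = 0$ the linear bound $\|S(t)f\|_{L^\infty_T M_{\tilde p,1}} \lesssim (1+T)^{|1/2-1/\tilde p|}\|f\|_{M_{\tilde p,1}}$ is \eqref{eq:modulationstrichartz4}, and the trilinear Duhamel bound
\[
\|N(u_1,u_2,u_3)\|_{L^\infty_T M_{\tilde p, 1}} \lesssim T(1+T)^{|1/2-1/\tilde p|} \prod_{i=1}^3 \|u_i\|_{L^\infty_T M_{\tilde p, 1}}
\]
follows by applying \eqref{eq:modulationstrichartz4} under the integral together with the Banach algebra property of $M_{\tilde p,1}$. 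At $\theta = 1$ the linear bound is the $L^2$ Strichartz estimate, and the trilinear bound $\|N(u_1,u_2,u_3)\|_{L^\infty_T L^2 \cap L^8_T L^4} \lesssim T^{1/2}\prod_i\|u_i\|_{L^\infty L^2 \cap L^8 L^4}$ is the standard computation recalled at the start of this section.

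Next I will apply Theorem~\ref{thm:multilinearinterpolation} to $N$, viewed as a trilinear operator in $(u_1,v_2,u_3)$ which is later specialised by $v_2 = \bar u_2$; this is legitimate because every norm appearing at each endpoint (and hence at every interpolated value) is invariant under complex conjugation. On the data side, complex interpolation of modulation spaces gives $[M_{\tilde p,1}, L^2]_\theta = M_{p,q}$ with $1/p = (1-\theta)/\tilde p + \theta/2$ and $1/q = 1 - \theta/2$. On the solution side, the classical identity $[L^\infty_t A_0, L^r_t A_1]_\theta = L^{r/\theta}_t[A_0,A_1]_\theta$ for vector-valued $L^r$-spaces, combined with the stability of intersections under complex interpolation, yields
\[
[L^\infty_t M_{\tilde p,1},\; L^\infty_t L^2 \cap L^8_t L^4]_\theta = L^\infty_t M_{p,q} \cap L^{8/\theta}_t [M_{\tilde p,1}, L^4]_\theta = X^{p,q}_T.
\]
Interpolating the endpoint constants produces $C_1 \sim \langle T\rangle^{(1-\theta)|1/2-1/\tilde p|}$ and $C_2 \sim T^{1-\theta/2}\langle T\rangle^{(1-\theta)|1/2-1/\tilde p|}$. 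Since $\alpha_2 = 1 - \theta/2 > 0$ throughout, Corollary~\ref{cor:blowupalternative} delivers both the local solution and the stated blow-up alternative.

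The step I expect to be the main obstacle is the solution-space interpolation identity: one has to check that complex interpolation really commutes both with the intersection of two norms and with the mixed $L^r_t(\,\cdot\,)$ structure, and that these two identifications combine to give precisely $X^{p,q}_T$. Everything else is routine: the endpoint estimates are either quoted from the literature or one-line consequences of the algebra/Strichartz inequalities, and the conjugation in $N$ is handled costlessly by the polarisation/symmetrisation remark following Definition~\ref{def:QWP}.
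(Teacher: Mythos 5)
Your proposal follows essentially the same route as the paper: the same two endpoints ($L^\infty_t M_{\tilde p,1}$ with the algebra property, and $L^\infty_t L^2\cap L^8_t L^4$ with Strichartz), the same application of Theorem~\ref{thm:multilinearinterpolation}, the same interpolated constant $T^{1-\theta/2}$, and the conclusion via Corollary~\ref{cor:blowupalternative}. The one step you flag as the main obstacle is indeed the only delicate point, and your stated identity $[L^\infty_t M_{\tilde p,1},\,L^\infty_t L^2\cap L^8_tL^4]_\theta = L^\infty_t M_{p,q}\cap L^{8/\theta}_t[M_{\tilde p,1},L^4]_\theta$ is more than is true or needed: complex interpolation only commutes with intersections up to the inclusion $[A,B\cap C]_\theta\subset[A,B]_\theta\cap[A,C]_\theta$ in general. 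The paper sidesteps this (see its footnote) by interpolating the output pairs $(L^\infty_t M_{\tilde p,1},L^\infty_t L^2)$ and $(L^\infty_t M_{\tilde p,1},L^8_tL^4)$ separately and then taking the intersection of the two resulting estimates, invoking \cite{benedekpanzone} only for the single mixed-norm identity. On the input side you should likewise state the Strichartz-endpoint trilinear bound with only $\prod_i\|u_i\|_{L^8_tL^4_x}$ on the right (which the standard computation gives), so that the interpolated input space is the single component $L^{8/\theta}_t[M_{\tilde p,1},L^4]_\theta\supset X^{p,q}_T$; with the intersection norm on the input you would need the reverse, generally false, inclusion. With these two cosmetic adjustments your argument coincides with the paper's proof.
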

\begin{remark}
    Note that due to $M^{\tilde p,1} \subset L^\infty$ we have that $[M_{\tilde p,1},L^4]_\theta \subset L^{4/theta}$. This shows that the constructed solutions are also distributional.
\end{remark}
\begin{proof}
    Without loss of generality we assume $T \leq 1$. The assumptions on $\theta$ and $\tilde p$ imply that $M_{p,q} = [M_{\tilde p,1},L^2]_\theta$. We interpolate\footnote{Strictly speaking, instead of interpolating with the intersection we interpolate first on both spaces and then take the intersection. Interpolation of mixed-norm $L^p$ spaces was shown to work in \cite{benedekpanzone}. Since we can apply this to $\square_k f$ for each $k$ the same works if we consider mixed-norm combinations of $L^p$ and modulation spaces.} the linear estimates
    \begin{align*}
        \|S(t)u_0\|_{L^\infty_t L^2_x \cap L^8_t L^4_x} &\lesssim \|u_0\|_{L^2},\\
        \|S(t)u_0\|_{L^\infty_t M_{\tilde p,1}} &\lesssim \|u_0\|_{M_{\tilde p,1}},
    \end{align*}
    to obtain
    \begin{equation}\label{eq:Linterpolestimate}
        \|S(t)u_0\|_{X_T} \lesssim \|u_0\|_{M^{p,q}}.
    \end{equation}
    Moreover, the nonlinear estimates
    \begin{align*}
        \|N(u_1,u_2,u_3)\|_{L^\infty_t L^2_x \cap L^8_t L^4_x} &\lesssim T^{1/2}\prod_{i=1}^3 \|u_i\|_{L^8_t L^4_x},\\
        \|N(u_1,u_2,u_3)\|_{L^\infty_t M^{\tilde p,1}} &\lesssim T\prod_{i=1}^3 \|u_i\|_{L^\infty_t M_{\tilde p,1}},
    \end{align*}
    give, by Theorem \ref{thm:multilinearinterpolation},
    \begin{equation}\label{eq:NLinterpolestimate}
        \|N(u_1,u_2,u_3)\|_{X_T} \lesssim T^{1-\theta/2}\prod_{i=1}^3 \|u_i\|_{X_T}.
    \end{equation}
    The result now follows from Corollary \ref{cor:blowupalternative}.
\end{proof}

\subsection{The triangle $1/q > |1-2/p|$}

Using Bourgain space techniques, Guo showed local wellposedness of cubic NLS in $M_{2,q}, 2 \leq q <\infty$ \cite{guo}. Since his results were also derived from a trilinear estimate of the form \eqref{eq:QWP2}, we can use interpolation to get more wellposedness results in modulation spaces. The triangle $1/q > |1-2/p|$ is strictly larger than the triangle from the first section and can be obtained by means of interpolating between the three endpoints $M_{\infty,1}$, $M_{1,1}$ and $M_{2,\infty}$. Since the latter space contains the Dirac delta distribution and there is no local wellposedness theory for it, we have to exclude it and obtain wellposedness in a half-open triangle.

We introduce the $U^p$ and $V^p$ spaces in which wellposedness was achieved.

\begin{definition}
    A $U^p_t L^2_x((a,b)\times \R)$ atom is a function $A: (a,b) \to L^2$ of the form
    \[
    		A=\sum_{k=1}^{K} \chi_{\left[t_{k-1}, t_{k}\right)} \phi_{k},
	\]
	where $a = t_0 < \dots t_K = b$ and $(\phi_1, \dots, \phi_K) \in (L^2)^K$ which has unit norm in $l^p$, i.e. $\sum_i \|\phi_i\|_{L^2}^p = 1$. The space $U^p_t L^2_x$ is defined as the space of elements of the form $\sum_{j=1}^\infty \lambda_j A_j$, where $(\lambda_j) \in l^1$. It is equipped with the norm
    \begin{equation}
        \|u\|_{U^p} = \inf\{\|(\lambda_j)\|_{l^1}: u = \sum_{j=1}^\infty \lambda_j A_j \text{ for } A_j \text{ }U^p \text{ atoms}\}.
    \end{equation}
    The space $U^p_\Delta$ is defined as $S(\cdot)U^p_t L^2_x$ with norm 
    \begin{equation}
        \|u\|_{U^p_\Delta} = \|S(-t)u(t)\|_{U^p_t L^2_x}
    \end{equation}
\end{definition}
The spaces $U^2_t$ and its close cousin $V^2_t$ can be seen as refinements of Bourgain spaces in the case of $b = 1/2$, which satisfy $U^p_t \subset L^\infty_t$ for all $1 \leq p < \infty$. Indeed, the $X^{s,b}$ space would be defined by the norm $\|u\|_{X^{s,b}} = \|S(-t)u(t)\|_{H^b_t H^s_x}$. The usual Strichartz spaces are connected to the $U_\Delta^p$ spaces via
    \[
        \|v\|_{L_{t}^{p} L_{x}^{q}} \lesssim \|v\|_{U_{\Delta}^{p}}.
	\]
A proof of this can be found in \cite[Chapter 4]{kochtataruvisan} and we refer to this book as a reference for an introduction to these spaces.

\begin{theorem}[\cite{guo}]
    Let $2 < q < \infty$ and let $X^q_T$ denote the space of all tempered distributions $u$ such that the norm $\|u\|_{X^q} = \|\|\square_{n}u\|_{U_{\Delta}^{2}([0,T])}\|_{l^{q}}$ is finite. Then,
    \begin{align}
        \|N(u_1,u_2,u_3)\|_{X^{q}_T} \lesssim (T^{1/2} + T^{1/4} + T^{1/q^+})\prod_{i=1}^3\|u_i\|_{X^{q}_T}.
    \end{align}
\end{theorem}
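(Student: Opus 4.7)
The plan is to reduce the trilinear bound to a resonance analysis for the cubic interaction, exploiting the almost orthogonality forced by the modulation decomposition together with bilinear Strichartz estimates transferred to the $U^2_\Delta$ framework, and then summing in $\ell^q$ over output modes. Decomposing each factor as $u_i = \sum_{n_i} \square_{n_i} u_i$, the Duhamel term $\square_{n_0} N(u_1,u_2,u_3)$ is a sum over triples $(n_1,n_2,n_3)$ of integrals of $\square_{n_0}\bigl(\square_{n_1} u_1 \cdot \overline{\square_{n_2} u_2} \cdot \square_{n_3} u_3\bigr)$. The inner product has Fourier support concentrated near $\xi = n_1 - n_2 + n_3$, so the outer $\square_{n_0}$ forces $n_0 - n_1 + n_2 - n_3 = O(1)$. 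By the $U^2$--$V^2$ duality from \cite{kochtataruvisan}, controlling the $U^2_\Delta$ norm of the Duhamel integral reduces to estimating the quadrilinear form
\[
    \int_0^T\!\!\int_\R \square_{n_1} u_1 \cdot \overline{\square_{n_2} u_2} \cdot \square_{n_3} u_3 \cdot \overline{\square_{n_0} v}\, dx\, dt
\]
against a test function $v$ with $\|\square_{n_0} v\|_{V^2_\Delta} \leq 1$.

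The key analytic input is the bilinear Strichartz estimate
\[
    \|\square_m f \cdot \square_\ell g\|_{L^2_{t,x}([0,T]\times \R)} \lesssim \langle m - \ell \rangle^{-1/2}\, \|\square_m f\|_{U^2_\Delta}\, \|\square_\ell g\|_{U^2_\Delta},
\]
obtained by transference from the free version via the atomic structure of $U^2_\Delta$. Using the resonance factorization $\xi_0^2 - \xi_1^2 + \xi_2^2 - \xi_3^2 = -2(\xi_0 - \xi_1)(\xi_0 - \xi_3)$, I would pair $\square_{n_1} u_1$ with $\square_{n_0} v$ and $\square_{n_3} u_3$ with $\square_{n_2} u_2$ in the non-diagonal regime to gain $\langle n_0 - n_1 \rangle^{-1/2}\langle n_2 - n_3 \rangle^{-1/2}$; Young's convolution inequality on $\Z$ with the almost-orthogonality constraint then closes the $\ell^q$ sum for $q > 2$, and one Hölder in time against a Strichartz norm produces the $T^{1/4}$ factor. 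In the nearly-diagonal regime where two frequency pairs lie within $O(1)$, I would substitute the Strichartz embeddings $\|v\|_{L^6_{t,x}} \lesssim \|v\|_{U^6_\Delta}$ and $\|v\|_{L^8_t L^4_x} \lesssim \|v\|_{U^8_\Delta}$, estimate the triple product in a dual mixed-norm space, and extract the $T^{1/2}$ contribution by Hölder in time.

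The main obstacle is the genuinely resonant sub-regime $|\xi_0-\xi_1|\cdot|\xi_0-\xi_3| = O(1)$, where neither the bilinear Strichartz gain nor an unweighted frequency summation is available: the naive sum over the remaining free index diverges logarithmically, which is precisely what obstructs the endpoint $q = 2$ of this trilinear estimate (the $q = 2$ modulation space being handled by the classical Strichartz analysis instead). Balancing a $q$-dependent Hölder in time against this divergence yields the residual $T^{1/q^+}$ factor; extracting the sharp $\ell^q$-summable weight in this regime while carefully tracking the exponent strictly above $1/q$ is the most delicate step, and it is what ultimately forces the restriction $2 < q < \infty$.
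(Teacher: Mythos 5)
This theorem is not proved in the paper at all: it is imported verbatim from Guo \cite{guo} and used as a black box, so there is no internal proof to compare against. Your outline does follow the strategy of Guo's original argument — reduction by $U^2$--$V^2$ duality to a quadrilinear form, unit-scale frequency decomposition with the constraint $n_0-n_1+n_2-n_3=O(1)$, bilinear Strichartz estimates transferred to $U^2_\Delta$ carrying a gain in the frequency separation, the resonance factorization $-2(\xi_0-\xi_1)(\xi_0-\xi_3)$ to organize the case analysis, and $L^6_{t,x}$ / $L^8_tL^4_x$ Strichartz in the diagonal regime — so the ingredients and the architecture are the right ones, and your diagnosis that the $\ell^q$ summation is what fails at $q=2$ (that endpoint being covered instead by the classical $L^2$ theory) is also correct.

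There is, however, a concrete gap in the non-diagonal regime as you have written it. After fixing $n_0$, the constraint leaves two independent separation parameters, say $j=n_0-n_1$ and $k=n_0-n_3$ (with $n_2=n_0-j-k+O(1)$). Your single pairing $(\square_{n_1}u_1,\square_{n_0}v)$ and $(\square_{n_3}u_3,\square_{n_2}u_2)$ yields the gain $\langle n_0-n_1\rangle^{-1/2}\langle n_2-n_3\rangle^{-1/2}\sim\langle j\rangle^{-1}$, which decays in $j$ only. The remaining sum over the free index $k$ then reduces to $\sum_k b_{n_0-j-k}c_{n_0-k}\le\|b\|_{\ell^2}\|c\|_{\ell^2}$, and for $q>2$ this is \emph{not} controlled by $\|b\|_{\ell^q}\|c\|_{\ell^q}$ (test on indicators of $\{1,\dots,N\}$), so the claimed Young-type summation does not close for any $q$. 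One must also exploit the second pairing $(\square_{n_1}u_1,\square_{n_2}u_2)$, $(\square_{n_3}u_3,\square_{n_0}v)$, which gains $\langle k\rangle^{-1}$, and take the minimum (or a geometric mean by interpolation) to obtain decay in both $\langle j\rangle$ and $\langle k\rangle$ simultaneously; only then does the weighted trilinear convolution close in $\ell^q$ for $q>2$, and only there do the restriction $q>2$ and the exponent $T^{1/q^+}$ actually emerge. Relatedly, the final summation step and the bookkeeping of the three time factors are asserted rather than carried out, and since that is precisely the delicate part of Guo's proof, the proposal should be regarded as a correct road map rather than a proof.
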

This estimate gives local wellposedness in $X^q_T \subset L^\infty_t M^{2,q}([0,T]\times \R)$. Indeed, for the linear part the definition of $U_\Delta$ gives
\begin{equation}
    \begin{split}
    \|S(t)u_0\|_{X_q} &= \|\|\square_{n}S(t)u_0\|_{U_{\Delta}^{2}}\|_{l^{q}} = \|\|\square_{n}u_0\|_{U^{2}_t L^2_x}\|_{l^{q}} \\
    &\lesssim \|\|\square_{n}u_0\|_{L^2_x}\|_{l^{q}} = \|u_0\|_{M^{2,q}},
    \end{split}
\end{equation}

Since this result was only shown for $2 < q < \infty$ for the sake of simplicity let us define $X^{q}_T = X^{2,q}_T$ if $1 \leq q \leq 2$, where $X^{p,q}_T$ is as in Theorem \ref{thm:lwpuppertriangle}. Then we arrive at the following theorem which is proven analogous to Theorem \ref{thm:lwpuppertriangle}:

\begin{theorem}
    Let $1/q > |1-2/p|$. Then for any initial data $u_0 \in M_{p, q}$, there is a $T > 0$ and a unique solution $u$ to \eqref{eq:NLS} in
    \begin{equation}
        u \in Y_T^{p,q} = \begin{cases}[L^\infty_t M_{1,1},X^{\tilde q}]_\theta, \quad &\text{ if } 1 < p < 2,\\
        X^{q}_T, \quad &\text{ if } p = 2,\\
        [L^\infty_t M_{\infty,1},X^{\tilde q}]_\theta, \quad &\text{ if } 2 < p < \infty\end{cases}
    \end{equation}
    Here, $\tilde q$ is chosen such that
    \begin{equation}
        \frac{1}{q} = 1- \theta + \frac{\theta}{\tilde q}, \quad \frac{1}{p} = \begin{cases}1-\frac{\theta}{2},\quad   &\text{ if } p < 2,\\
        \frac{\theta}{2}, &\text{ if } p > 2.\end{cases}
    \end{equation}
    Moreover, either the solution $u$ exists globally in time, or there is $T^*< \infty$ such that
    \[
    		\limsup_{t \to T^*} \|u(t)\|_{M_{p,q}} = \infty.
    \]
\end{theorem}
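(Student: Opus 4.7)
The plan is to mirror the strategy of Theorem \ref{thm:lwpuppertriangle} but now use the $X^q$-estimate of Guo as the $p=2$ endpoint of the interpolation rather than the $L^2$-Strichartz estimate. The target again is to verify the two hypotheses of Corollary \ref{cor:blowupalternative} in the space $Y_T^{p,q}$, with polynomial constants in $T$, so that quantitative well-posedness and the blow-up alternative follow mechanically. I will restrict to $T \leq 1$, absorbing all $\langle T\rangle$ factors.

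First I would handle the $p = 2$ case by quoting Guo's trilinear estimate directly and observing, as in the short computation just before the theorem, that the linear homogeneous estimate $\|S(t)u_0\|_{X^q_T} \lesssim \|u_0\|_{M_{2,q}}$ is a tautology coming from the atomic definition of $U_\Delta^2$. For the boundary case $1 \leq q \leq 2$ (still with $p=2$) there is nothing new: $Y_T^{2,q} = X^{2,q}_T$ was covered by Theorem \ref{thm:lwpuppertriangle}. This fixes one endpoint of the forthcoming interpolation.

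For $p \neq 2$ I would then choose $(p_0,q_0) = (\infty,1)$ if $p > 2$, and $(p_0,q_0) = (1,1)$ if $p < 2$; call the corresponding endpoint space $E_T = L^\infty_t M_{p_0,1}([0,T]\times\R)$. The relevant endpoint bounds are $\|S(t)u_0\|_{E_T} \lesssim \|u_0\|_{M_{p_0,1}}$ (from \eqref{eq:modulationstrichartz4} together with $T\leq 1$) and $\|N(u_1,u_2,u_3)\|_{E_T} \lesssim T\prod_i\|u_i\|_{E_T}$ (from the algebra property of $M_{p_0,1}$ combined with \eqref{eq:modulationstrichartz4}). Pairing this with Guo's endpoint $\|N(u_1,u_2,u_3)\|_{X^{\tilde q}_T} \lesssim T^{\alpha}\prod_i\|u_i\|_{X^{\tilde q}_T}$ for some $\alpha > 0$, Theorem \ref{thm:multilinearinterpolation} applied with parameter $\theta$ gives
\begin{equation*}
    \|N(u_1,u_2,u_3)\|_{Y^{p,q}_T} \lesssim T^{(1-\theta)+\theta\alpha}\prod_{i=1}^3\|u_i\|_{Y^{p,q}_T},
\end{equation*}
and the analogous linear estimate $\|S(t)u_0\|_{Y^{p,q}_T}\lesssim \|u_0\|_{M_{p,q}}$ follows by ordinary (linear) complex interpolation. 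The relations $\tfrac1p = 1-\tfrac{\theta}{2}$ (or $\tfrac{\theta}{2}$) and $\tfrac1q = 1-\theta+\tfrac{\theta}{\tilde q}$ are exactly what one needs so that $[M_{p_0,1},M_{2,\tilde q}]_\theta = M_{p,q}$, and they parametrise the interior of the half-open triangle $1/q > |1-2/p|$ as $(\theta,\tilde q)$ ranges over $(0,1)\times(2,\infty)$.

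The main technical subtlety, and the step I would spend the most care on, is identifying $[E_T,X^{\tilde q}_T]_\theta$ with the space $Y^{p,q}_T$ in the statement and in particular establishing the continuous embedding $Y^{p,q}_T \hookrightarrow C^0_t M_{p,q}$ needed for the blow-up alternative. This requires commuting the complex interpolation bracket with the mixed norm structure (time outside, frequency-localised $L^p_x$ inside); this is exactly the issue flagged in the footnote of Theorem \ref{thm:lwpuppertriangle} and handled by applying \cite{benedekpanzone} to each $\square_k u$ and then reassembling in the $\ell^q$-norm over $k$, using that $U^2_\Delta \hookrightarrow L^\infty_t L^2_x$ so that $X^{\tilde q}_T \hookrightarrow L^\infty_t M_{2,\tilde q}$. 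Once this identification is in place, Corollary \ref{cor:blowupalternative} applies verbatim and yields both the unique solution and the blow-up alternative in $M_{p,q}$.
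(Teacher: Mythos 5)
Your proposal is correct and follows essentially the same route as the paper, which simply states that the theorem is ``proven analogous to Theorem \ref{thm:lwpuppertriangle}'': multilinear interpolation (Theorem \ref{thm:multilinearinterpolation}) between the trivial algebra-property estimates in $L^\infty_t M_{1,1}$ or $L^\infty_t M_{\infty,1}$ and Guo's trilinear bound in $X^{\tilde q}_T$, followed by Corollary \ref{cor:blowupalternative}. You in fact spell out more detail than the paper does, in particular the identification $[M_{p_0,1},M_{2,\tilde q}]_\theta = M_{p,q}$ and the mixed-norm interpolation subtlety from the footnote, both of which are handled exactly as you describe.
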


\begin{remark}
	Taking into account the well-posedness in $M_{4,2}$ from \cite{schippa}, these results can be slightly strengthened to include the line $1/q = 1-2/p$, $4 \leq p \leq \infty$. Indeed, in \cite{schippa} the estimate
	\[
		\|S(t)f\|_{L^4([0,1]\times \R)} \lesssim \|f\|_{M_{4,2}}
	\]
	is shown to hold, which gives rise to an iteration in $L^\infty_t M_{4,2} \cap L^{\frac{24}{7}}_t L^4_x$. Interpolating the linear and the corresponding trilinear estimate with the estimates for $q = 1, p = \infty$ puts us into the setting of Corollary \ref{cor:blowupalternative}.
\end{remark}

\section{Global Wellposedness}\label{sec:gwp}

\subsection{Global Wellposedness if $p = 2$}

If $p = 2$ and $q > 2$, Oh-Wang \cite{ohwang} showed the existence of almost conserved quantities that are equivalent to the norms in the spaces $M_{p,q}$. To this end they used the complete integrability of cubic NLS via techniques from Killip-Visan-Zhang \cite{MR3820439} in combination with the Galilei transform. In this subsection, we extend these almost conserved quantities to the case $q \in (1,2)$ by using a weight with more decay, as it was done in \cite{MR3820439} for Besov spaces $B^s_{2,q}$.

First we state the necessary preliminaries from \cite{MR3820439}. Given an operator $A$ with continuous integral kernel $K(x,y)$, we define the trace
\begin{equation*}
	\trace (A) = \int_\R K(x,x)\,dx,
\end{equation*}
whenever it exists, and the Hilbert-Schmidt norm
\begin{equation*}
	\|A\|_{\J_2} = \int_{\R^2} |K(x,y)|^2 \,dxdy.
\end{equation*}
It can then be shown that for all $n \geq 2$,
\[
	|\trace(A_1 \dots A_n)| \leq \|A_1\|_{\J_2}\dots\|A_n\|_{\J_2}.
\]
We consider both focusing and defocusing cubic NLS in the form
\begin{equation}\label{eq:nlskvz}
	-iu_t = -u_{xx} \pm 2 |u|^2 u.
\end{equation}
Depending on the sign we have the following definition.

\begin{definition}
	The perturbation determinant $\alpha(\kappa,u)$ and its coefficients $\alpha_n(\kappa,u)$ are
	\begin{align*}
		\alpha(\kappa,u) &= \real \sum_{n=1}^\infty \frac{(\mp 1)^{n-1}}{n} \trace\big([(\kappa-\partial)^{-1/2}u(\kappa + \partial)^{-1}\bar u(\kappa-\partial)^{-1/2}]^n\big)\\
		&= \sum_{n=1}^\infty \alpha_{2n}(\kappa,u),
	\end{align*}
	where $\alpha_{2n}(\kappa,\lambda u) = \lambda^{2n}\alpha_{2n}(\kappa,u)$ for all $\lambda \in \R$.
\end{definition}

Absolute convergence of this series holds provided we can control norms sligthly stronger than $H^{-1/2}(\R)$. Define $a \sim b$ as $a\lesssim b$ and $a\gtrsim b$. Then:

\begin{lemma}[Lemma 4.1 in \cite{MR3820439}]\label{lemma:alphaestimate}
	Given $u \in \Sc(\R)$ and $\kappa > 0$, we have
	\begin{equation}
		\|(\kappa-\partial)^{-1/2}u(\kappa + \partial)^{-1/2}\|^2_{\J_2} \sim \int_\R \log\Big(4 + \frac{\xi^2}{\kappa^2}\Big) \frac{|\hat u(\xi)|^2}{(\xi^2 + 4\kappa^2)^{1/2}}d\xi.
	\end{equation}
	In particular for all $\delta > 0$,
	\begin{equation}
		|\alpha_{2n}(\kappa,u)| \lesssim \Big(\int_\R \frac{|\hat u(\xi)|^2}{(\xi^2 + 4\kappa^2)^{1/2-\delta}}\Big)^{n}.
	\end{equation}
\end{lemma}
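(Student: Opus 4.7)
The plan is to reduce both assertions to a one-dimensional scalar integral: the Hilbert--Schmidt norm squared of $B:=(\kappa-\partial)^{-1/2}u(\kappa+\partial)^{-1/2}$ factors after a change of variables, and the bound on $\alpha_{2n}$ follows from the trace--product inequality already stated in the excerpt together with the self-adjointness relation $C=B^{*}$ for $C=(\kappa+\partial)^{-1/2}\bar u(\kappa-\partial)^{-1/2}$.

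First I would compute the Fourier-side integral kernel of $B$: since the Fourier multipliers of $(\kappa\mp\partial)^{-1/2}$ are $(\kappa\mp i\xi)^{-1/2}$ and multiplication by $u$ becomes convolution with $\hat u$, the kernel reads $\widehat K_{B}(\xi,\eta)=(\kappa-i\xi)^{-1/2}\hat u(\xi-\eta)(\kappa+i\eta)^{-1/2}$. Using $|(\kappa\mp i\xi)^{-1/2}|^{2}=(\kappa^{2}+\xi^{2})^{-1/2}$ and substituting $\zeta=\xi-\eta$ gives
\[
\|B\|_{\J_2}^{2}\sim\int_{\R}|\hat u(\zeta)|^{2}\,I(\zeta,\kappa)\,d\zeta,\qquad I(\zeta,\kappa):=\int_{\R}\frac{d\eta}{\sqrt{(\kappa^{2}+\eta^{2})(\kappa^{2}+(\eta+\zeta)^{2})}}.
\]
Rescaling $\eta=\kappa\tau$ and $s=\zeta/\kappa$ yields $I(\zeta,\kappa)=\kappa^{-1}J(s)$ with $J(s):=\int d\tau/\sqrt{(1+\tau^{2})(1+(\tau+s)^{2})}$, and I would then prove the uniform two-sided bound $J(s)\sim\log(4+s^{2})/\sqrt{4+s^{2}}$. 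For $|s|\lesssim 1$ both sides are comparable to $1$. For $|s|\gg 1$ I would split $\R$ into $\{|\tau|\le|s|/4\}$, $\{|\tau+s|\le|s|/4\}$, and the remainder: in each of the first two regions one factor in the integrand is $\sim|s|$ while integrating the other produces a $\log|s|$, and on the remainder both factors are comparable so the contribution is $O(|s|^{-1})$ without logarithm. The matching lower bound comes from restricting to $\{1\le|\tau|\le|s|/4\}$, where the integrand is $\gtrsim 1/(|\tau|\,|s|)$. Unscaling then produces $I(\zeta,\kappa)\sim\log(4+\zeta^{2}/\kappa^{2})/\sqrt{4\kappa^{2}+\zeta^{2}}$, which is the first assertion.

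For the bound on $\alpha_{2n}$, note that $(BB^{*})^{n}$ is a product of $2n$ operators alternating between $B$ and $B^{*}$, each with $\|B^{*}\|_{\J_2}=\|B\|_{\J_2}$. Applying the trace--product inequality from the excerpt gives $|\trace((BB^{*})^{n})|\le\|B\|_{\J_2}^{2n}$, and since
\[
\trace\bigl([(\kappa-\partial)^{-1/2}u(\kappa+\partial)^{-1}\bar u(\kappa-\partial)^{-1/2}]^{n}\bigr)=\trace((BB^{*})^{n}),
\]
the definition of $\alpha_{2n}$ yields $|\alpha_{2n}(\kappa,u)|\lesssim\|B\|_{\J_2}^{2n}$. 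Finally, the elementary inequality $\log(4+t)\le C_{\delta}(4+t)^{\delta}$ applied with $t=\xi^{2}/\kappa^{2}$ converts the first-assertion integrand into $(4\kappa^{2}+\xi^{2})^{-1/2+\delta}$, up to a factor $\kappa^{-2\delta}$ that may be absorbed into the implicit constant, and raising to the $n$-th power finishes the proof. The main technical obstacle is the sharp two-sided asymptotic for $J(s)$; everything else is routine bookkeeping.
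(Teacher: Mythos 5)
The paper does not prove this lemma itself; it is quoted verbatim from Killip--Vi\c{s}an--Zhang \cite{MR3820439} (their Lemma 4.1), and your argument is essentially the one given there: pass to the Fourier-side kernel, use Plancherel for the Hilbert--Schmidt norm, reduce to the scalar integral $J(s)=\int d\tau/\sqrt{(1+\tau^2)(1+(\tau+s)^2)}$, prove the two-sided bound $J(s)\sim \log(4+s^2)/\sqrt{4+s^2}$ by the three-region splitting, and then feed the trace--product inequality. Your treatment of $J(s)$ (including the lower bound on $\{1\le|\tau|\le|s|/4\}$) is correct and is the real content of the lemma.

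Two small points to fix. First, with $B=(\kappa-\partial)^{-1/2}u(\kappa+\partial)^{-1/2}$ and $C=(\kappa+\partial)^{-1/2}\bar u(\kappa-\partial)^{-1/2}$ one has $B^{*}=(\kappa-\partial)^{-1/2}\bar u(\kappa+\partial)^{-1/2}$, since $((\kappa\pm\partial)^{-1/2})^{*}=(\kappa\mp\partial)^{-1/2}$; so $C\neq B^{*}$ and the operator inside the trace is $(BC)^n$, not $(BB^{*})^n$. This is harmless: the trace--product inequality quoted in the paper needs only Hilbert--Schmidt bounds on each of the $2n$ factors, and $\|C\|_{\J_2}=\|B\|_{\J_2}$ follows from the identical kernel computation using $|\widehat{\bar u}(\zeta)|=|\hat u(-\zeta)|$, but you should not invoke self-adjointness. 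Second, absorbing the factor $\kappa^{-2\delta}$ from $\log(4+\xi^2/\kappa^2)\le C_\delta\kappa^{-2\delta}(4\kappa^2+\xi^2)^{\delta}$ into the implicit constant is fine for the values $\kappa\in\{1/2,1\}$ used in this paper, but the resulting constant is not uniform as $\kappa\to 0$ (it is uniform for $\kappa\ge 1$, which is the regime in \cite{MR3820439}); note also that the constant in the final bound is necessarily of the form $C_\delta^{\,n}$, which is what the summation of the tail in Theorem \ref{thm:gwpp=2} requires.
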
 

Even though we need $H^{-1/2+}(\R)$ regularity to control the series, the first coefficient in the expansion behaves similar to an $H^{-1}(\R)$ norm:

\begin{lemma}[Lemma 4.2 in \cite{MR3820439}]\label{lemma:secondterm}
	Given $u \in \Sc(\R)$ and $\kappa > 0$, we have
	\begin{equation}
		\alpha_2(\kappa,u) = \real \trace\big((\kappa - \partial)^{-1}u(\kappa + \partial)^{-1}\bar u\big) = \int_\R \frac{2\kappa |\hat u(\xi)|^2}{\xi^2 + 4\kappa^2}\,d\xi.
	\end{equation}
\end{lemma}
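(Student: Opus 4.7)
The statement packages two claims: the cyclic reformulation of $\alpha_2$ as $\real \trace((\kappa-\partial)^{-1}u(\kappa+\partial)^{-1}\bar u)$, and the explicit Plancherel-type formula. The first follows directly from the definition: $\alpha_2$ is the $n=1$ term of the series defining $\alpha$, namely $\real \trace((\kappa-\partial)^{-1/2}u(\kappa+\partial)^{-1}\bar u(\kappa-\partial)^{-1/2})$, and cyclicity of the trace applies because Lemma \ref{lemma:alphaestimate} with $n=1$ tells us each factor $(\kappa\pm\partial)^{-1/2}u$ is Hilbert--Schmidt; hence the whole product is trace class and the outer $(\kappa-\partial)^{-1/2}$ factors may be combined.

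For the explicit formula I would carry out a direct Fourier computation. The operators $(\kappa \mp \partial)^{-1}$ are Fourier multipliers with symbols $(\kappa \mp i\xi)^{-1}$, while multiplication by $u$ becomes convolution with $\hat u$ on the Fourier side. Writing out the kernel $K(x,y)$ of $A := (\kappa-\partial)^{-1}uQ\bar u$ with $Q = (\kappa+\partial)^{-1}$ as an iterated Fourier integral, the trace is $\int K(x,x)\,dx$. The $x$-integration produces a delta function enforcing momentum conservation $\zeta + \eta = 0$, where $\zeta$ and $\eta$ are the frequencies of $u$ and $\bar u$ respectively. Using $\widehat{\bar u}(\eta) = \overline{\hat u(-\eta)}$, the two convolutions combine into a $|\hat u(\eta)|^2$ factor, and everything reduces to the one-dimensional integral
\[
\int_{\R} \frac{d\sigma}{(\kappa - i\sigma)(\kappa + i(\sigma - \eta))}.
\]

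This last integral is evaluated by closing the contour in the upper half-plane. The integrand has poles at $\sigma = -i\kappa$ (lower half-plane, so not enclosed) and $\sigma = \eta + i\kappa$ (enclosed), and decays like $|\sigma|^{-2}$ at infinity so the semicircle contribution vanishes. Residue calculus gives the value $2\pi/(2\kappa - i\eta)$. Taking real parts produces $\real(2\kappa - i\eta)^{-1} = 2\kappa/(\eta^2 + 4\kappa^2)$, which together with the Plancherel normalization yields exactly $\int \frac{2\kappa |\hat u(\xi)|^2}{\xi^2+4\kappa^2}\,d\xi$.

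The only real subtlety is bookkeeping: justifying the Fubini swap that produces the momentum-conservation delta (which is fine for Schwartz $u$, as all integrands decay rapidly), keeping track of the Fourier normalization so the $2\pi$ factors land correctly, and confirming that cyclicity is legal. The contour integration itself is completely routine, so I do not foresee a genuine analytic obstacle -- the proof is essentially a direct computation that exploits Schwartz regularity of $u$ to make all manipulations rigorous.
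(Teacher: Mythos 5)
The paper does not prove this lemma at all --- it is imported verbatim as Lemma 4.2 of \cite{MR3820439}, so there is no in-paper argument to compare against. Your computation is correct and is essentially the standard proof from that reference: reduce to the kernel of $(\kappa-\partial)^{-1}u(\kappa+\partial)^{-1}\bar u$, integrate on the diagonal, and evaluate the resulting frequency integral (your residue computation, picking up the pole at $\sigma=\eta+i\kappa$ and yielding $\real(2\kappa-i\eta)^{-1}=2\kappa/(\eta^2+4\kappa^2)$, checks out; equivalently one can avoid contour integration entirely by noting the composed kernel is $e^{-2\kappa(y-x)}\chi_{y>x}\,u(y)\bar u(x)$ and applying Plancherel to the convolution with $e^{-2\kappa t}\chi_{t>0}$, whose Fourier transform is $(2\kappa+i\xi)^{-1}$). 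One small imprecision in your justification of cyclicity: $(\kappa\pm\partial)^{-1/2}u$ on its own is \emph{not} Hilbert--Schmidt (its kernel $g(x-y)u(y)$ has $g\notin L^2$); what Lemma \ref{lemma:alphaestimate} gives is that the \emph{sandwiched} operator $(\kappa-\partial)^{-1/2}u(\kappa+\partial)^{-1/2}$ is Hilbert--Schmidt. Grouping the $n=1$ term as a product of the two Hilbert--Schmidt factors $(\kappa-\partial)^{-1/2}u(\kappa+\partial)^{-1/2}$ and $(\kappa+\partial)^{-1/2}\bar u(\kappa-\partial)^{-1/2}$ makes it trace class, and then cycling the bounded outer resolvent square roots is legitimate and produces the stated form.
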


Most importantly, $\alpha(\kappa,u)$ is a conserved quantity for all $\kappa > 0$ whenever it is defined.

\begin{proposition}[Proposition 4.3 in \cite{MR3820439}]\label{prop:conservation}
	Given $u(t,x)$ a Schwartz-space solution of \eqref{eq:nlskvz} and $\kappa > 0$ large enough, we have
	\[
		\frac{d}{dt} \alpha(\kappa,u(t)) = 0.
	\]
\end{proposition}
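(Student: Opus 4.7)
The plan is to differentiate the series defining $\alpha(\kappa, u(t))$ termwise in time and show that the resulting sum of derivatives vanishes, in the spirit of the analogous computation for KdV/mKdV in \cite{MR3820439}. First, I would invoke the fact that cubic NLS propagates Schwartz regularity globally (a standard consequence of $L^2$-conservation combined with persistence of regularity), so $u(t) \in \Sc(\R)$ for all $t$ with Schwartz seminorms uniformly bounded on any compact time interval $[0,T]$. For such a $T$, choosing $\kappa_0 > 0$ sufficiently large depending on $T$ and on the seminorms of $u_0$, Lemma \ref{lemma:alphaestimate} yields
\[
\|(\kappa-\partial)^{-1/2} u(t) (\kappa+\partial)^{-1/2}\|_{\J_2} \le \tfrac{1}{2} \quad \text{for all } \kappa \ge \kappa_0,\; t \in [0,T],
\]
so the series $\sum_n \alpha_{2n}(\kappa, u(t))$ converges absolutely and uniformly in $t$, justifying the exchange of $\tfrac{d}{dt}$ with the sum.

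Next, set $B(t,\kappa) := (\kappa-\partial)^{-1/2} u(t)(\kappa+\partial)^{-1}\bar u(t)(\kappa-\partial)^{-1/2}$, so that $\alpha_{2n}(\kappa, u) = \real \frac{(\mp 1)^{n-1}}{n}\trace(B^n)$. Cyclicity of the trace gives $\tfrac{d}{dt}\trace(B^n) = n\,\trace(B^{n-1}\dot B)$, and substituting $u_t = -i u_{xx} \pm 2i|u|^2 u$ (together with its conjugate) decomposes $\dot B$ into a dispersive piece (linear in $u_{xx},\bar u_{xx}$) and a nonlinear piece (linear in $|u|^2 u, |u|^2 \bar u$). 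For the dispersive piece, repeated integration by parts inside the trace together with the commutation $\partial (\kappa\mp\partial)^{-1} = (\kappa\mp\partial)^{-1}\partial$ rewrites the contribution as the trace of a commutator, which is purely imaginary and hence killed by $\real$. For the nonlinear piece, each cubic contribution inside $\trace(B^{n-1}\dot B)$ has the same algebraic structure as a piece of $\trace(B^{n+1})$, and the weights $\tfrac{(\mp 1)^{n-1}}{n}\cdot n$ are calibrated so that summation over $n$ produces a telescoping cancellation.

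The main obstacle is the combinatorial bookkeeping of the nonlinear cancellation: the alternating signs $(\mp 1)^{n-1}$ and the factors $\tfrac{1}{n}, \tfrac{1}{n+1}$ must be aligned so that the cubic contribution to $\tfrac{d}{dt}\alpha_{2n}$ is cancelled exactly by a matching contribution in $\tfrac{d}{dt}\alpha_{2n+2}$. This alignment is dictated by the formal structure $\alpha(\kappa, u) = \mp\real \log\det(I \mp \cdot)$ of a Fredholm-type determinant, whose logarithmic derivative absorbs precisely these factors. A conceptually cleaner route, which I would sketch as a backup, is to identify $\alpha(\kappa, u)$ with $\real \log a(i\kappa)$, where $a(\zeta)$ is the transmission coefficient of the Zakharov-Shabat scattering problem associated to NLS; invariance of the scattering data under the NLS flow, a standard consequence of the Lax-pair identity $L_t = [P, L]$, then yields conservation with no combinatorial effort. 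Either argument is standard integrable-systems technology; the work is in carrying it out at the regularity provided by Lemma \ref{lemma:alphaestimate}.
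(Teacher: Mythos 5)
The paper does not prove this statement: it is imported verbatim as Proposition 4.3 of \cite{MR3820439} and used as a black box, so the only meaningful comparison is with the proof in that reference. Your overall strategy --- termwise differentiation of the perturbation-determinant series, justified for $\kappa$ large by the Hilbert--Schmidt bound of Lemma \ref{lemma:alphaestimate}, with the cancellation organized by the $\log\det$ structure, and the transmission-coefficient/Lax-pair identification as an alternative --- is indeed the standard route and matches the spirit of Killip--Vi\c{s}an--Zhang. The preliminary reductions (persistence of Schwartz regularity, uniform convergence of the series on $[0,T]$ for $\kappa \ge \kappa_0(T,u_0)$, cyclicity giving $\tfrac{d}{dt}\trace(B^n) = n\,\trace(B^{n-1}\dot B)$) are all correct and correctly justified.

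The gap is that the sketch stops exactly where the content of the proposition begins. Both cancellation mechanisms are asserted rather than derived. For the dispersive piece, the claim that the contribution is ``the trace of a commutator, which is purely imaginary and hence killed by $\real$'' conflates two different mechanisms and verifies neither: the trace of a commutator of operators whose product is trace class is zero outright (no reality argument needed), while showing that the $u_{xx}$ and $\bar u_{xx}$ contributions actually combine into such a commutator requires the commutation identities for $\partial$ against $(\kappa\mp\partial)^{-1}$ and a careful bookkeeping of which resolvent sits between which factors --- this is precisely the computation you have not done, and ``purely imaginary'' is not obviously the right description of what remains. For the nonlinear piece, the ``telescoping cancellation'' between $\tfrac{d}{dt}\alpha_{2n}$ and $\tfrac{d}{dt}\alpha_{2n+2}$ is the heart of the matter; invoking the formal identity $\alpha = \mp\real\log\det(I\mp\cdot)$ explains why one expects it but does not establish it, since the insertion of $|u|^2u$ into one slot of $B^{n-1}\dot B$ must be matched term-by-term against a specific contraction of $B^{n+1}$, and the signs $(\mp 1)^{n-1}$ must be checked to align for both the focusing and defocusing cases. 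Your backup via the transmission coefficient has the same character: the identification of $\alpha(\kappa,u)$ with $\real\log a(i\kappa)$ for the Zakharov--Shabat problem and the trace-class justification of $\log\det$ manipulations at this regularity is itself a nontrivial step (it is essentially Chapter 8 of \cite{MR3874652}). As written, the proposal is a correct plan with the decisive computation deferred, so it does not yet constitute a proof.
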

In \cite{ohwang} the construction of the almost conserved quantity on the level of $M_{2,q}$ for $2 \leq q < \infty$ worked as follows: Combining Lemma \ref{lemma:secondterm}, Proposition \ref{prop:conservation} and invariance of \eqref{eq:nlskvz} under Galilean transformations, we obtain almost conservation of
\[
	\int_\R \frac{|\hat u(\xi)|^2}{(\xi-n)^2 + 1}\,d\xi,
\]
uniformly in $n$. 

Moreover, considering $\langle \xi\rangle^{-1/2-}$ instead of a compactly supported bump function for the uniform decomposition on the Fourier side in the definition of the modulation space norm gives an equivalent norm for $2 \leq q \leq \infty$. More precisely, if one defines
\[
	\|f\|_{MH^{\theta,q}} = \big(\sum_{n \in \Z} \|\langle \xi-n \rangle^{\theta}\hat f(\xi)\|_{L^2_\xi}^q\big)^{\frac{1}{q}},
\]
then for $\theta < -1/2$ and $2 \leq q \leq \infty$ one has
\[
	\|f\|_{MH^{\theta,q}} \sim \|f\|_{M_{2,q}}.
\]
We follow quickly the proof (see Lemma 1.2 in \cite{ohwang}) to motivate our next definition. The estimate ``$\gtrsim$'' is trivial since for $\sigma$ as in Definition \ref{def:IDO} we have $\sigma(\xi) \lesssim \langle \xi \rangle^{\theta}$. For the converse estimate, write $I_k = [k-1/2,k+1/2)$. Then,
\begin{align*}
	\|f\|_{MH^{\theta,q}} &= \Big(\sum_{n \in \Z} \Big(\int_\R\langle \xi-n \rangle^{2\theta}|\hat f(\xi)|^2\,d\xi\Big)^{\frac q2}\Big)^{\frac1q}\\
	&\sim \Big\| \sum_{k \in \Z} \langle k-n\rangle^{2\theta} \int_{I_k} |\hat f(\xi)|^2 \,d\xi\Big\|_{\ell^{q/2}_n}^{1/2}\\
	&\lesssim \big\|\langle n\rangle^{2\theta}\big\|_{\ell^1_n}^{1/2} \Big\| \int_{I_n} |\hat f(\xi)|^2 \,d\xi\Big\|_{\ell^{q/2}}^{1/2}\\
	&\lesssim \|f\|_{M_{2,q}}.
\end{align*}
We see that both the restriction $q \geq 2$ and $\theta < -1/2$ enter in the third line when Young's convolution inequality is used. If we have more decay available, i.e. if $\theta < -1$, we can also use the triangle inequality to get the full range of $q$.

\begin{lemma}
    If $\theta < -1$ and $1 \leq q \leq \infty$, we have
    \begin{equation*}
        \|f\|_{MH^{\theta,q}} \sim \|f\|_{M_{2,q}}.
    \end{equation*}
\end{lemma}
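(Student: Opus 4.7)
The direction $\|f\|_{MH^{\theta,q}}\gtrsim \|f\|_{M_{2,q}}$ requires no new idea: exactly as in the $2\le q\le\infty$ case already treated, one uses that $\sigma_n$ is bounded and supported in $\{|\xi-n|\le 1\}$ where $\langle\xi-n\rangle\sim 1$, so $|\sigma_n(\xi)|\lesssim\langle\xi-n\rangle^{\theta}$; squaring, integrating, and taking $\ell^q$ in $n$ does the job. This step is entirely insensitive to the value of $q$ and to whether $\theta<-\tfrac12$ or $\theta<-1$.

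For the reverse direction I would follow the three-line calculation in the excerpt up through
\[
\|f\|_{MH^{\theta,q}}\sim \Big\|\sum_{k\in\Z}\langle k-n\rangle^{2\theta}\,b_k\Big\|_{\ell^{q/2}_n}^{1/2},
\qquad b_k=\int_{I_k}|\hat f(\xi)|^2\,d\xi\sim \|\square_k f\|_{L^2}^2,
\]
which uses only that $\langle\xi-n\rangle\sim\langle k-n\rangle$ for $\xi\in I_k$. The task is thus to bound the convolution $(w\ast b)_n$ with $w_k=\langle k\rangle^{2\theta}$ in $\ell^{q/2}_n$ by $\|b\|_{\ell^{q/2}_k}$. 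For $2\le q\le\infty$ we are in a genuine Banach $\ell^{q/2}$, and Young's inequality gives $\|w\ast b\|_{\ell^{q/2}}\le\|w\|_{\ell^1}\|b\|_{\ell^{q/2}}$, which is finite as soon as $2\theta<-1$; this is the case already done in \cite{ohwang}.

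The new ingredient, and the main (only) obstacle, is the range $1\le q<2$, where $\ell^{q/2}$ is merely a quasi-Banach space and Young's convolution inequality is unavailable in the form we want. Here I would exploit the $p$-triangle inequality $\|\sum_k f_k\|_{\ell^p}^p\le\sum_k\|f_k\|_{\ell^p}^p$, valid for $0<p\le 1$, applied with $p=q/2$ and $f_k(n)=\langle n-k\rangle^{2\theta}b_k$:
\[
\Big\|\sum_k \langle\cdot-k\rangle^{2\theta}b_k\Big\|_{\ell^{q/2}}^{q/2}
\le \sum_k b_k^{q/2}\,\|\langle\cdot\rangle^{2\theta}\|_{\ell^{q/2}}^{q/2}
= \|\langle\cdot\rangle^{2\theta}\|_{\ell^{q/2}}^{q/2}\,\|b\|_{\ell^{q/2}}^{q/2}.
\]
The weight sum is finite iff $\sum_n\langle n\rangle^{q\theta}<\infty$, i.e.\ $\theta<-1/q$. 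The worst case is $q=1$, which is precisely $\theta<-1$; for all $1\le q\le\infty$ the hypothesis $\theta<-1$ therefore suffices. Combining the two ranges $q\in[1,2)$ and $q\in[2,\infty]$ yields $\|f\|_{MH^{\theta,q}}\lesssim\|f\|_{M_{2,q}}$, completing the equivalence. The only delicate point to watch is that the passage $b_k\sim\|\square_k f\|_{L^2}^2$ involves the usual bounded overlap of the $\sigma_k$, which contributes only a multiplicative constant and does not interact with the summation exponent.
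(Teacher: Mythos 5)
Your proof is correct, but it takes a different route from the paper's in the key step. The paper first pulls the square root inside the $\xi$-integral via the pointwise subadditivity $(\sum_l a_l)^{1/2}\le\sum_l a_l^{1/2}$, which linearizes the problem: one is left with the convolution of $\langle\cdot\rangle^{\theta}$ (single power of the weight) against $\|\square_l f\|_{L^2}$ in the genuine Banach space $\ell^q$, and a single application of Young's inequality $\ell^1\ast\ell^q\subset\ell^q$ covers all $1\le q\le\infty$ at once; the hypothesis $\theta<-1$ is exactly $\langle\cdot\rangle^{\theta}\in\ell^1$. You instead stay at the level of the squared quantities $b_k$ in $\ell^{q/2}$ and, when $q<2$ and Young is unavailable, substitute the $p$-triangle inequality for $p=q/2\le 1$. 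Both arguments are sound and rest on the same harmless bounded-overlap equivalence $b_k\sim\|\square_k f\|_{L^2}^2$. The paper's version buys uniformity (no case split, no quasi-Banach considerations); your version buys precision: it exhibits the exact threshold $\theta<-1/\min(q,2)$, so for fixed $1<q<2$ you in fact prove the equivalence under the weaker hypothesis $\theta<-1/q$, and it makes transparent that $q=1$ is the endpoint forcing $\theta<-1$ in the lemma as stated.
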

\begin{proof}
    Again ``$\gtrsim$'' follows immediately from $\sigma_n \lesssim \langle \cdot \rangle^{\theta}$. Now for the converse statement write
    \begin{align*}
    		\Big(\int_\R \langle \xi-n\rangle^{2\theta} |\hat f(\xi)|^2 \,d\xi\Big)^{\frac{1}{2}} &\sim \Big(\int_\R \sum_{l \in \Z} \sigma_l^2(\xi)\langle l-n\rangle^{2\theta} |\hat f(\xi)|^2 \,d\xi\Big)^{\frac{1}{2}}\\
    		&\leq \sum_{l \in \Z} \Big(\int_\R \sigma_l^2(\xi)\langle l-n\rangle^{2\theta} |\hat f(\xi)|^2 \,d\xi\Big)^{\frac{1}{2}}\\
    		&= \sum_{l \in \Z}\langle l-n\rangle^{\theta} \Big(\int_\R \sigma_l^2(\xi) |\hat f(\xi)|^2 \,d\xi\Big)^{\frac{1}{2}}.
    \end{align*}
    Thus,
    \begin{align*}
        \|u\|_{MH^{\theta,q}} &= \big(\sum_{n \in \Z} \|\langle \xi-n \rangle^{\theta}\hat f(\xi)\|_{L^2_\xi}^q\big)^{\frac{1}{q}}\\
        &\lesssim \Big\|\sum_{l \in \Z}\langle l-n\rangle^{\theta} \|\square_l f\|_{L^2}\|_{\ell^q}\\
        &\leq \|\langle n\rangle^{\theta}\|_{\ell^1_n} \|u\|_{M^{2,q}},
    \end{align*}
    by Young's inequality in the last step. Since $\theta < 1$, $\|\langle n\rangle^{\theta}\|_{\ell^1_n} <\infty$.
\end{proof}

From the form of $\alpha_2$ in Lemma \ref{lemma:secondterm} we see that we will get $\theta = -1$. By recombining $\alpha_2$ for different values of $\kappa$, we get more decay (see also Lemma 3.4 in \cite{MR3820439}).

\begin{definition}Define the weight function $w(\xi,\kappa)$ as
    \begin{equation}
        w(\xi,\kappa) = \frac{3\kappa^4}{(\xi^2 + \kappa^2)(\xi^2 + 4\kappa^2)}.
    \end{equation}
\end{definition}
A short calculation reveals that
\begin{equation*}
	w(\xi,\kappa) = 4 \frac{(\kappa/2)^2}{\xi^2 + \kappa^2} - \frac{\kappa^2}{\xi^2 + 4\kappa^2},
\end{equation*}
and hence
\begin{equation}
    4\kappa\alpha_2\Big(\frac\kappa2,u\Big) - \frac{\kappa}{2}\alpha_2(\kappa,u) = \int w(\xi,\kappa)|\hat u(\xi)|^2 \, d\xi.
\end{equation}
Correspondingly we define $\mathcal{F}(\tilde \square_n u)(\xi) = w(\xi-n,1)^{1/2}\hat u(\xi)$ and
\[
    		\|u\|_{\tilde M^{2,q}} = \|\|\tilde \square_n u\|_{L^2}\|_{l^q_n}.
\]

With these preparations we can prove:

\begin{theorem}\label{thm:gwpp=2}
	Let $q \in [1,\infty)$. There exists a constant $C = C(q)$ such that 
	\begin{equation}
		\|u(t)\|_{M_{2,q}} \leq
		\begin{cases}
		    C(1+\|u(0)\|_{M_{2,q}})^{\frac2q-1} \|u(0)\|_{M_{2,q}}, \quad &\text { if } \quad 1 \leq q \leq 2, \\
		    C(1+\|u(0)\|_{M_{2,q}})^{\frac{q}2-1} \|u(0)\|_{M_{2,q}}, \quad &\text { if } \quad 2 \leq q < \infty 
        \end{cases}
	\end{equation}
	for all $u \in \Sc(\R)$ solutions to the cubic NLS on $\R$.
\end{theorem}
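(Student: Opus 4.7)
The case $q=2$ follows at once from $L^2$-conservation since $M_{2,2}=L^2$, so I focus on $q\in[1,2)\cup(2,\infty)$. My plan is to combine the Oh--Wang Galilean-shift plus almost-conservation argument with the sharper weight $w(\cdot,\kappa)$ (which gives the extended norm equivalence from the lemma above) and an NLS-scaling reduction. Writing $M:=\|u(0)\|_{M_{2,q}}$, I first reduce to a small-data statement: if $M\le\epsilon_0$ for some fixed $\epsilon_0=\epsilon_0(q)$ I will prove $\|u(t)\|_{M_{2,q}}\lesssim M$ directly. Otherwise I rescale $v:=u_\lambda$ with $\lambda\sim(M/\epsilon_0)^{2}$. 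For $q\le 2$, Lemma \ref{lemma:scaling} gives $\|v(0)\|_{M_{2,q}}\lesssim\lambda^{-1/2}M\le\epsilon_0$, while the dual scaling inequality undoes to $\|u(s)\|_{M_{2,q}}\lesssim\lambda^{1/q}\|v(\lambda^2 s)\|_{M_{2,q}}$. Composing with the small-data bound produces $\|u(s)\|_{M_{2,q}}\lesssim\lambda^{1/q-1/2}M\sim M^{2/q-1}M$ for $M\ge 1$, exactly the prefactor in the theorem; the symmetric choice $\lambda\sim M^{q}$ handles $q\ge 2$.

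For the small-data result, set $Q_n(v):=\|\tilde\square_n v\|_{L^2}^2=\int w(\xi-n,1)|\hat v(\xi)|^2\,d\xi$; by the lemma preceding the theorem, $\|v\|_{M_{2,q}}^2\sim\|(Q_n(v))_n\|_{\ell^{q/2}}$ uniformly in $q\in[1,\infty]$, thanks to the $\langle\xi\rangle^{-4}$ decay of $w$. Combining the identity $w(\xi,\kappa)=\kappa^2/(\xi^2+\kappa^2)-\kappa^2/(\xi^2+4\kappa^2)$ with the Galilean symmetry $v_{-n}(t,x):=e^{-inx-in^2 t}v(t,x+2nt)$ (which also solves NLS and satisfies $|\widehat{v_{-n}}(\xi)|^2=|\hat v(\xi+n)|^2$), I write $Q_n(v)$ as a fixed linear combination of $\alpha_2(1/2,v_{-n})$ and $\alpha_2(1,v_{-n})$. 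Proposition \ref{prop:conservation} then gives conservation of $\alpha(\kappa,v_{-n}(\cdot))$ for each $n$ and each $\kappa\in\{1/2,1\}$; writing $\alpha_2=\alpha-\sum_{m\ge 2}\alpha_{2m}$ and applying Lemma \ref{lemma:alphaestimate} with a small $\delta>0$ yields
\[
|Q_n(v(t))-Q_n(v(0))|\lesssim A(v_{-n}(t),1)^2+A(v_{-n}(0),1)^2,
\]
where $A(f,\kappa):=\int|\hat f(\xi)|^2(\xi^2+4\kappa^2)^{-(1/2-\delta)}d\xi$, valid as long as $A\le 1/2$ uniformly. For small data this is verified by $A(v_{-n},1)\lesssim\|v\|_{L^2}^2\le\|v\|_{M_{2,q}}^2\le\epsilon_0^2$, using $M_{2,q}\hookrightarrow L^2$ for $q\le 2$.

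Taking $\ell^{q/2}$-norms and using subadditivity $(x+y)^{q/2}\le x^{q/2}+y^{q/2}$ (valid for $q\le 2$) reduces everything to the summation bound
\[
\sum_{n\in\Z} A(v_{-n},1)^{q}\lesssim\|v\|_{M_{2,q}}^{2q}.
\]
This is the main technical obstacle. Pointwise $A(v_{-n},1)\le(c*b)_n$ with $c_m=(1+m^2)^{-(1/2-\delta)}$ and $b_l=\|\square_l v\|_{L^2}^2$; for $1<q\le 2$, Young's inequality in the form $\|c*b\|_{\ell^q}\le\|c\|_{\ell^q}\|b\|_{\ell^1}$ applies, and one uses the $\ell^p$-nesting $\|b\|_{\ell^1}\le\|b\|_{\ell^{q/2}}=\|v\|_{M_{2,q}}^2$ together with $\|c\|_{\ell^q}<\infty$ (which holds once $(1-2\delta)q>1$). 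The endpoint $q=1$ is more delicate because the candidate Young estimate diverges; there one must exploit the extra decay of $w$ directly, for instance by expanding the $\alpha_{2m}$ as explicit Fourier multi-integrals and using trace cancellation as in \cite{MR3820439} for Besov spaces, or by a dyadic frequency splitting adapted to the support of $w$. For $q>2$, which is already in \cite{ohwang}, the corresponding summation bound follows from the Young variant $\|c*b\|_{\ell^q}\le\|c\|_{\ell^{q'}}\|b\|_{\ell^{q/2}}$.

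Granting the summation bound, the small-data inequality reads
\[
\|v(t)\|_{M_{2,q}}^{2}\le C\|v(0)\|_{M_{2,q}}^{2}+C'\bigl(\|v(t)\|_{M_{2,q}}^{4}+\|v(0)\|_{M_{2,q}}^{4}\bigr);
\]
for $\|v(0)\|_{M_{2,q}}\le\epsilon_0$ sufficiently small, a standard continuity-in-time bootstrap closes this to $\|v(t)\|_{M_{2,q}}\lesssim\|v(0)\|_{M_{2,q}}$. Undoing the scaling from the first paragraph then gives the claimed growth rate for arbitrary Schwartz initial data.
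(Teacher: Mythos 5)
Your overall strategy is the same as the paper's: Galilean-shifted perturbation determinants, the recombined weight $w$ to push the norm equivalence down to $q=1$, Young's inequality for the tail $\alpha-\alpha_2$, and NLS scaling to pass from small to large data. For $1<q<2$ your argument is essentially the paper's proof (your Young exponents $\|c*b\|_{\ell^q}\le\|c\|_{\ell^q}\|b\|_{\ell^1}$ versus the paper's $\|c\|_{\ell^{1+}}\|b\|_{\ell^{q-}}$ are an immaterial variation), and the scaling step with $\lambda\sim(1+\|u(0)\|_{M_{2,q}})^2$ reproduces the stated exponent $\frac{2}{q}-1$.

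The genuine gap is the endpoint $q=1$, which is part of the statement. You correctly observe that the candidate Young estimate for $\sum_n A(v_{-n},1)$ diverges because $\langle m\rangle^{-(1-2\delta)}\notin\ell^1$, but the fixes you gesture at are not carried out and would not work as described. In particular, ``extra decay of $w$'' cannot help: the weight $w$ enters only the recombination of the quadratic coefficients $\alpha_2$ and has no effect on the tail $\alpha-\alpha_2$, which is estimated for each fixed $\kappa\in\{1/2,1\}$ and each $n$ separately via Lemma \ref{lemma:alphaestimate}. The paper's resolution is to split the tail further: the coefficients $\alpha_{2m}$ with $m\ge3$ still close under Young's inequality because the relevant exponent becomes $\ell^{3/2}$ (one needs $\sum_n A_n^{3/2}$ rather than $\sum_n A_n$), so the sole problematic term is $\alpha_4$. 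For it one uses the explicit Koch--Tataru formula for $\alpha_4$ as a Fourier multi-integral together with the observation $\|\hat u\|_{L^1(\R)}\le\|u\|_{M_{2,1}}$, which upgrades one of the convolution factors to an absolutely integrable one and yields $\sum_n|\alpha_4(u_n)|^{1/2}\lesssim\kappa^{-1}\|u\|_{M_{2,1}}\|u\|_{L^2}$. The mechanism is thus not ``trace cancellation'' but the improved integrability of $\hat u$ afforded by the $M_{2,1}$ norm; without some version of this step your proof establishes the theorem only for $q>1$.
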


\begin{proof}
	The case $2 \leq q < \infty$ was treated in \cite{ohwang}. In what follows we slightly modify its argument when $1 \leq q < 2$. Consider the case of small initial data in $M_{2,q}$ first and assume
	\begin{equation*}
		\|u(0)\|_{M_{2,q}} \leq \varepsilon \ll 1.
	\end{equation*}
	For $n \in \Z$, define $u_n(x,t) = e^{-inx + in^2 t} u(x-2nt,t)$ which satisfies $|\hat u_n(\xi,t)| = |\hat u(\xi+n,t)|$ and is a solution to cubic NLS as well.\\
	By Lemma \ref{lemma:alphaestimate} for any $\delta > 0$
	\begin{align*}
		\Big|\alpha\Big(u_n(t),\frac12\Big) - \alpha_2\Big(u_n(t),\frac12\Big)\Big| \lesssim \sum_{j=2}^\infty \left(\int_\R \frac{|\hat u(\xi,t)|^2}{(1+(\xi-n)^2)^{1/2-\delta}}\right)^j.
	\end{align*}
	Now for any $q \in (1,\infty)$ if $\delta$ is small enough,
	\begin{align*}
		\int_\R \frac{|\hat u(\xi,0)|^2}{(1+(\xi-n)^2)^{1/2-\delta}} &\sim \sum_k \frac{1}{(1+(k-n)^2)^{1/2-\delta}} \int_{I_k} |\hat u(\xi,0)|^2 d\xi\\
		&\lesssim \|u(0)\|_{M_{2,q}}^2,
	\end{align*}
	uniformly in $n \in \Z$. Indeed, if $2 < q < \infty$ we can employ Hölder's inequality provided with exponent $q/2$ if $\delta > 0$ is small enough. The case $1 \leq q \leq 2$ follows from $q = 2$ because of the embedding $M_{2,q} \subset L^2$. This shows that at time $t = 0$ the series for $\alpha$ is convergent. By continuity in time we can then choose a small time interval $0 \in I$ such that the series stays convergent, and
	\[
		\Big|\alpha\Big(u_n(t),\frac12\Big) - \alpha_2\Big(u_n(t),\frac12\Big)\Big| \lesssim \Big(\int_\R \frac{|\hat u(\xi,t)|^2}{(1+(\xi-n)^2)^{1/2-\delta}}\Big)^2,
	\]
	for all $t \in I$. The same argument works for $\kappa = 1$ instead of $\kappa = 1/2$.
	
	We calculate the difference of $\alpha$ and $\alpha_2$ by first making use of the above estimate, then localizing in Fourier space and then using Young's convolution inequality, with $I_k = [k,k+1)$,
	\begin{align*}
		\Big(\sum_{n \in \Z} \Big|\alpha\Big(u_n(t),\frac12\Big) - \alpha_2\Big(u_n(t),\frac12\Big)\Big|^{\frac{q}{2}}\Big)^{\frac1q} &\lesssim \Big(\sum_{n \in \Z} \Big(\int_\R \frac{|\hat u(\xi,t)|^2}{(1+(\xi-n)^2)^{1/2-\delta}}\Big)^q\Big)^{\frac1q}\\
		&\sim \Big\|\sum_{k \in \Z} \langle k - n \rangle^{-1+2\delta}\int_{I_k} |\hat u(\xi)|^2 \,d\xi\Big\|_{\ell^q}\\
		&\lesssim \|\langle k \rangle^{-1+2\delta}\|_{\ell^{1+}} \|u\|_{M_{2,2q-}}^2\\
		&\lesssim \|u\|_{M_{2,q}}^2,
	\end{align*}
provided $\delta > 0$ is small enough such that we can choose $q < 2q-$, and $q > 1$.

We use the definition of $\tilde M_{2,q}$, the subadditivity of the square root, Minkowski's inequality, Proposition \ref{prop:conservation} and the above estimate to find
	\begin{align*}
		\|u(t)\|_{\tilde M_{2,q}} &= \big\|\|\tilde \square_n u(t)\|_{L^2}\big\|_{\ell^q_n} = \Big\| \Big(4\alpha_2\Big(\frac12,u_n(t)\Big)-\frac12\alpha_2(1,u_n(t))\Big)^{\frac12}\Big\|_{\ell^q_n}\\
		&\leq \Big\| \Big|4(\alpha_2-\alpha)\Big(\frac12,u_n(t)\Big)-\frac12(\alpha_2-\alpha)(1,u_n(t))\Big|^{\frac12}\Big\|_{\ell^q_n} \\
		&\qquad + \Big\| \Big(4\alpha\Big(\frac12,u_n(t)\Big)-\frac12\alpha(1,u_n(t))\Big)^{\frac12}\Big\|_{\ell^q_n}\\
		&\leq 4\Big\|(\alpha_2-\alpha)\Big(\frac12,u_n(t)\Big)\Big\|_{\ell^{\frac q2}_n} + \frac12\Big\|(\alpha_2-\alpha)(1,u_n(t))\Big\|_{\ell^{\frac{q}{2}}_n} \\
		&\qquad + \Big\| \Big|4\alpha\Big(\frac12,u_n(0)\Big)-\frac12\alpha(1,u_n(0))\Big|^{\frac12}\Big\|_{\ell^q_n}\\
		&\leq \|u(0)\|_{\tilde M_{2,q}} + 4\sum_{s \in \{0,t\},\kappa \in \{1/2,1\}}\|(\alpha_2-\alpha)(\kappa,u_n(s))\|_{\ell^{\frac{q}{2}}_n}^{\frac12}\\
		&\leq \|u(0)\|_{\tilde M_{2,q}} + C(\|u(0)\|_{\tilde M_{2,q}}^2 + \|u(t)\|_{\tilde M_{2,q}}^2),
	\end{align*}
	for some constant $C > 0$. Using a continuity argument gives
	\begin{equation}
	    \|u(t)\|_{M_{2,q}} \lesssim \|u(0)\|_{M_{2,q}}
	\end{equation}
	if $\|u(0)\|_{M_{2,q}} \leq \varepsilon$ with $\varepsilon$ sufficiently small.\\
	For general initial data, we apply Lemma \ref{lemma:scaling} and the discussion thereafter. Consider $u_{\lambda}(x, t)=\lambda^{-1} u\left(\lambda^{-1} x, \lambda^{-2} t\right)$, which is a solution to NLS for all $\lambda \geq 1$. Then for $1 < q \leq 2$, we have
	\begin{equation*}
	    \left\|u_{\lambda}(0)\right\|_{M_{2, q}} \lesssim \lambda^{-\frac{1}{2}}\|u(0)\|_{M_{2, q}} \leq \varepsilon \ll 1
	\end{equation*}
	if $\lambda \sim (1+\|u(0)\|_{M_{2,q}})^2$. On the other hand,
	\begin{equation*}
	    \|u(t)\|_{M_{2, q}} \lesssim \lambda^{\frac{1}{q}}\left\|u_{\lambda}(\lambda^{2} t)\right\|_{M_{2, q}},
	\end{equation*}
	and so
	\begin{equation*}
	    \|u(t)\|_{M_{2, q}} \lesssim \lambda^{\frac{1}q - \frac12} \|u(0)\|_{M_{2,q}} \sim (1+\|u(0)\|_{M_{2,q}})^{\frac2q-1}\|u(0)\|_{M_{2,q}},
	\end{equation*}
	which finishes the proof if $1 < q < 2$.

This proof does not extend yet to $q = 1$ because the estimate of the tail does not have enough decay in $n$. The problem here is the coefficient $\alpha_4$ since for the tail of order homogeneity $6$ and more we can estimate with Young's inequality
\[
\begin{split}
	\sum_{n \in \Z} |\alpha(u_n)-\alpha_2(u_n)-\alpha_4(u_n)|^{1/2} &\lesssim \sum_{n \in \Z}\Big( \int_\R \frac{|\hat u(\xi,t)|^2}{(1+(\xi-n)^2)^{1/2-\delta}}\Big)^\frac{3}{2}\\
	&\sim \Big\|\sum_{k \in \Z} \langle k - n \rangle^{-1+2\delta}\int_{I_k} |\hat u(\xi)|^2 \,d\xi\Big\|_{\ell^{\frac{3}{2}}}^{\frac{3}{2}}\\
	&\lesssim \|\langle k \rangle^{-1+2\delta}\|_{\ell^{\frac{3}{2}}}^{\frac{3}{2}} \|u\|_{L^2}^3 \lesssim \|u\|_{L^2}^3,
\end{split}
\]
as long as $\delta$ stays small enough. To handle the sum
\[
	\sum_{n \in \Z} |\alpha_4(u_n)|^{1/2},
\]
we need to take a closer look at its structure. In \cite[Chapter 8.1]{MR3874652} Koch-Tataru prove a formula for $\tilde T_4$ which is related to $\alpha_4$ via $\alpha_4 = \real \tilde T_4(i\kappa)$ and reads
\[
	\tilde T_4(i\kappa) = \frac{i}{2\pi} \int_{\xi_1 + \xi_2 - \xi_3 - \xi_4 = 0} \frac{\real\big( \overline{\hat{u}(\xi_1)\hat u(\xi_2)}\hat u(\xi_3)\hat u(\xi_4)\big)}{(2i\kappa + \xi_1)(2i\kappa + \xi_3)(2i\kappa + \xi_4)}.
\]
This implies
\[
	\alpha_4 = \frac{1}{2\pi} \int_{\xi_1 + \xi_2 - \xi_3 - \xi_4 = 0} \frac{2\kappa(\xi_1\xi_3 + \xi_1\xi_4 + \xi_3\xi_4) - 8\kappa^3}{(4\kappa^2 + \xi_1^2)(4\kappa^2 + \xi_3^2)(4\kappa^2 + \xi_4^2)}\real\big( \overline{\hat{u}(\xi_1)\hat u(\xi_2)}\hat u(\xi_3)\hat u(\xi_4)\big).
\]
We concentrate on the part where there are frequencies in the numerator because the other part is more easily estimated. Now for example,
\begin{align*}
	&\int_{\xi_1 + \xi_2 - \xi_3 - \xi_4 = 0} \frac{|\xi_1\xi_3|}{(4\kappa^2 + \xi_1^2)(4\kappa^2 + \xi_3^2)(4\kappa^2 + \xi_4^2)}|\hat{u}(\xi_1)||\hat u(\xi_2)||\hat u(\xi_3)||\hat u(\xi_4)|\\
	&\quad \leq \Big\| \frac{|\xi_1|\hat u}{4\kappa^2+\xi_1^2}* \frac{|\xi_3|\hat u}{4\kappa^2+\xi_3^2} * \frac{\hat u}{4\kappa^2+\xi_4^2} * \hat u\Big\|_{L^\infty}\\
	&\quad \leq \Big\| \frac{|\xi|\hat u}{4\kappa^2+\xi^2}\Big\|_{L^2}^2\Big\| \frac{\hat u}{4\kappa^2+\xi^2}\Big\|_{L^1}\|\hat u\|_{L^1}\\
	&\quad \lesssim \Big\| \frac{\hat u}{\sqrt{4\kappa^2+\xi^2}}\Big\|_{L^2}^2\Big\| \frac{\hat u}{4\kappa^2+\xi^2}\Big\|_{L^1}\|u\|_{M_{2,1}}.
\end{align*}
Here we used Young's convolution inequality and the fact that
\[
	\int_\R |\hat u(\xi)|d\xi = \sum_{k \in \Z}\int_{I_k} |\hat u(\xi)|d\xi \leq \sum_{k \in \Z}\|\hat u\|_{L^2(I_k)} = \|u\|_{M_{2,1}}.
\]
Thus to bound $\sum_{n \in \Z} |\alpha_4(u_n)|^{1/2}$ we estimate
\begin{align*}
	 \sum_{n \in \Z} &\Big(\|u_n\|_{M_{2,1}}\int \frac{|\hat u_n(\xi)|^2}{4\kappa^2 + \xi^2}\,d\xi\int \frac{|\hat u_n(\xi)|}{4\kappa^2 + \xi^2}\,d\xi\Big)^{\frac12}\\
	&\sim \sum_{n \in \Z}\|u\|_{M_{2,1}}^{\frac12}\Big(\sum_k\frac{\int_{I_k}|\hat u|^2}{4\kappa^2+(k-n)^2}\,d\xi \sum_l \frac{\int_{I_k} |\hat u|}{4\kappa^2 + (l-n)^2}\, d\xi\Big)^{\frac12}\\
	&\leq \|u\|_{M_{2,1}}^{\frac12} \Big\|\Big(\sum_k\frac{\int_{I_k}|\hat u|^2}{4\kappa^2+(k-n)^2}\,d\xi\Big)^{\frac12}\Big\|_{\ell^2_n}\Big\|\Big(\sum_l \frac{\int_{I_k} |\hat u|}{4\kappa^2 + (l-n)^2}\, d\xi\Big)^{\frac12}\Big\|_{\ell^2_n}\\
	&\leq \|u\|_{M_{2,1}}^{\frac12}\big(\|\hat u\|_{L^2}^2\sum_k \frac{1}{4\kappa^2 + k^2}\big)^{\frac12}\big(\|\hat u \|_{L^1}\sum_l \frac{1}{4\kappa^2 + l^2}\big)^{\frac12}\\
	&\lesssim \kappa^{-1}\|u\|_{M_{2,1}} \|u\|_{L^2}.
\end{align*}
In the first line we estimated with the inequality from above, then we discretized in Fourier space, then we estimated via Hölder and Young's convolution inequality, and finally we used again that the $L^1$ norm of the Fourier transform is bounded by the $M_{2,1}$ norm and that the scaling behavior of the sums is $\kappa^{-1/2}$.

Arguing as before, we also obtain the case $q = 1$.
\end{proof}

\subsection{Global Wellposedness if $p < 2$}

If $p < 2$, the spaces $M_{p,q}$ are contained in $M_{2,q}$ and we expect an upgrade to a global result with the use of the principle of persistence of regularity (see e.g. \cite{tao}). We use the following version of Gronwall's inequality:

\begin{lemma}\label{lemma:gronwall}
	Let $u, \alpha,\beta: [a,b] \to \R$ be continuous with $\beta \geq 0$. Assume that for all $t \in  [a,b]$,
	\[
		u(t) \leq \alpha(t) + \int_a^t \beta(s)u(s)\,ds.
	\]
	Then also
	\[
		u(t) \leq \alpha(t) + \int_a^t \alpha(s)\beta(s)e^{\int_s^t \beta(s')\,ds'}\,ds.
	\]
\end{lemma}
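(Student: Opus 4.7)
The plan is to use the classical integrating factor trick. First I would introduce the auxiliary function
\[
    v(t) = \int_a^t \beta(s) u(s)\,ds,
\]
which is well-defined and continuously differentiable on $[a,b]$ since $\beta$ and $u$ are continuous, and which vanishes at $t = a$. The hypothesis then rewrites as $u(t) \leq \alpha(t) + v(t)$, and differentiating $v$ gives
\[
    v'(t) = \beta(t) u(t) \leq \beta(t)\alpha(t) + \beta(t) v(t),
\]
where we crucially used $\beta \geq 0$ to preserve the inequality when multiplying by $\beta(t)$.

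Next I would solve this linear differential inequality for $v$. Multiplying both sides by the integrating factor $e^{-\int_a^t \beta(s')\,ds'}$ turns the inequality into
\[
    \frac{d}{dt}\Bigl( e^{-\int_a^t \beta(s')\,ds'} v(t)\Bigr) \leq \beta(t)\alpha(t) e^{-\int_a^t \beta(s')\,ds'}.
\]
Integrating from $a$ to $t$ and using $v(a) = 0$ yields
\[
    e^{-\int_a^t \beta(s')\,ds'} v(t) \leq \int_a^t \beta(s)\alpha(s) e^{-\int_a^s \beta(s')\,ds'}\,ds,
\]
and multiplying through by $e^{\int_a^t \beta(s')\,ds'}$ and combining the two exponentials gives $v(t) \leq \int_a^t \alpha(s)\beta(s) e^{\int_s^t \beta(s')\,ds'}\,ds$. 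Substituting this bound back into $u(t) \leq \alpha(t) + v(t)$ delivers the claimed estimate.

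There is no real obstacle here; the only subtlety is that the sign condition $\beta \geq 0$ is essential at the very first step (to pass from $u \leq \alpha + v$ to $\beta u \leq \beta\alpha + \beta v$), and continuity of $\beta, u, \alpha$ guarantees that the manipulations with the integrating factor are legal in the classical sense without having to invoke any absolutely continuous variant. If one preferred, the same argument works verbatim for $u, \alpha$ merely locally integrable and $\beta \in L^1$ by interpreting the product rule in the sense of absolutely continuous functions, but for the present statement the continuous setup suffices.
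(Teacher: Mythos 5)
Your proof is correct. It is the standard integrating-factor argument for the integral form of Gronwall's inequality: setting $v(t)=\int_a^t\beta u$, using $\beta\geq 0$ to multiply the hypothesis through, and integrating $\frac{d}{dt}\bigl(e^{-\int_a^t\beta}v\bigr)\leq \beta\alpha e^{-\int_a^t\beta}$ with $v(a)=0$. The paper states this lemma without proof (it is a classical result quoted for later use), so there is nothing to compare against; your argument is exactly the textbook proof one would supply, and all the steps are justified in the continuous setting you describe.
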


The following blow-up alternative is easily obtained:

\begin{lemma}\label{lemma:gwprightup1}
     If for all $T>0$,
     \[
     	\sup_{t \in [0,T]} \|u(t)\|_{M_{\infty,1}} < \infty,
     \]
     and if cubic NLS is locally well-posed in $M_{p,q}^s(\R)$ for some $1 \leq p,q \leq \infty$, $s \geq 0$, then it is also globally wellposed in this space.
\end{lemma}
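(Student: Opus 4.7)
The argument is a standard persistence-of-regularity bootstrap: propagate the $M^s_{p,q}$ norm along Duhamel using the a priori $M_{\infty,1}$ bound, and close with Gronwall's inequality. Arguing by contradiction, suppose the maximal time of existence $T^*$ is finite. Then Corollary \ref{cor:blowupalternative} gives $\limsup_{t\to T^*}\|u(t)\|_{M^s_{p,q}}=\infty$, while by hypothesis $M:=\sup_{t\in[0,T^*)}\|u(t)\|_{M_{\infty,1}}<\infty$.

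The key technical ingredient I would first establish is the trilinear bound
\begin{equation*}
\||u|^{2}u\|_{M^s_{p,q}}\;\lesssim\;\|u\|_{M_{\infty,1}}^{2}\,\|u\|_{M^s_{p,q}}.
\end{equation*}
This is not immediate from iterating \eqref{eq:genhoelder2} (that route produces an uncontrolled $M^s_{\infty,1}$ norm), but it follows by mimicking the frequency-localized proof of that inequality. One expands
\[
\square_k(u\bar u u)=\sum_{l_1+l_2+l_3\approx k}\square_k\bigl(\square_{l_1}u\,\square_{l_2}\bar u\,\square_{l_3}u\bigr),
\]
applies Peetre's inequality $\langle k\rangle^{s}\lesssim \langle l_1\rangle^{s}+\langle l_2\rangle^{s}+\langle l_3\rangle^{s}$ to move the weight onto one of the three factors, bounds the product in $L^p$ via Hölder with exponents $(p,\infty,\infty)$ (using the uniform $L^{p}$- and $L^{\infty}$-boundedness of $\square_l$), and closes with Young's convolution inequality in the $k$-variable at exponents $(q,1,1)$, which satisfies the admissibility relation $1/q+1+1=2+1/q$ from \eqref{eq:genhoelder}.

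Equipped with this, I would insert the bound into Duhamel's formula \eqref{eq:fpnls}. The propagator estimate $\|S(t)f\|_{M^s_{p,q}}\lesssim \langle t\rangle^{1/2}\|f\|_{M^s_{p,q}}$ follows from \eqref{eq:modulationstrichartz4}, since $S(t)$ is a Fourier multiplier and therefore commutes with each $\square_k$, so the weight $\langle k\rangle^{s}$ passes through freely. For every $t\in[0,T^*)$ this yields
\begin{equation*}
\|u(t)\|_{M^s_{p,q}}\;\leq\;C\langle T^*\rangle^{1/2}\|u_0\|_{M^s_{p,q}}+C\langle T^*\rangle^{1/2}M^{2}\int_0^t\|u(\tau)\|_{M^s_{p,q}}\,d\tau.
\end{equation*}

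Finally, Lemma \ref{lemma:gronwall} applied with $\alpha$ the constant first term and $\beta \equiv C\langle T^*\rangle^{1/2}M^{2}$ gives the uniform a priori bound
\begin{equation*}
\|u(t)\|_{M^s_{p,q}}\;\leq\;C\langle T^*\rangle^{1/2}\|u_0\|_{M^s_{p,q}}\exp\!\bigl(C\langle T^*\rangle^{1/2}M^{2}\,T^*\bigr)\;<\;\infty
\end{equation*}
on $[0,T^*)$, contradicting the blow-up alternative. Hence $T^*=\infty$. The only non-routine step is the trilinear estimate above; the remainder is the textbook Duhamel-Gronwall scheme.
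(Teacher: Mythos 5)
Your proof is correct and follows essentially the same route as the paper: reduce to an a priori bound via the blow-up alternative of Corollary \ref{cor:blowupalternative}, estimate the Duhamel term by a trilinear product bound that places the weight $\langle k\rangle^s$ on one factor and the $M_{\infty,1}$ norm on the other two, and close with Gronwall. Your extra care with the trilinear estimate is warranted — the paper invokes \eqref{eq:genhoelder2} directly, and as you observe a naive iteration of that bilinear bound leaves an uncontrolled $M^s_{\infty,1}$ factor, whereas the trilinear version follows from exactly the Peetre--H\"older--Young argument you supply.
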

\begin{proof}
By Corollary \ref{cor:blowupalternative} we have to show that the $M^s_{p,q}(\R)$ norm cannot blow up. Now $u$ solves
\begin{equation}
    u(t) = S(t) u_0 + 2i\int_0^t S(t-s) |u|^2 u(s) ds,
\end{equation}
and hence if $0 \leq t \leq T$, estimating with \eqref{eq:genhoelder2},
\begin{equation}
    \|u(t)\|_{M^s_{p,q}} \lesssim_T \|u_0\|_{M^s_{p,q}} + \|u\|^2_{L^\infty([0,T],M_{\infty,1})}\int_0^t \|u(s)\|_{M^s_{p,q}} ds.
\end{equation}
Using the assumption $\|u\|^2_{L^{\infty}([0,T],M_{\infty,1})} \leq C$ we can use Gronwall's inequality and conclude.
\end{proof}

Lemma \ref{lemma:gwprightup1} tells us that the $M_{\infty,1}$ norm is a controlling norm in this setting. This shows that when $1\leq p \leq 2$, $1 \leq q \leq \infty$ and $s$ is high enough, not only the question of local but also of global well-posedness becomes trivial: From the embedding $H^{1/2+} \subset M_{2,1}$ and the construction of conserved quantities adapted to $H^s$ for any $s > -1/2$ \cite{MR3874652,MR3820439} we find global in time bounds in $M_{\infty,1}$ if we just embed into $H^{1/2+}$. In the spaces $M_{p,1}$ with $1 \leq p \leq 2$ we also find global well-posedness due to Theorem \ref{thm:gwpp=2}. The case $p > 2$ is more complicated and treated below.

For $s = 0$ and general $1 < q < \infty$, we obtained the local well-posedness via interpolation. In the upper triangle $1/q \geq \max(1/p',1/p)$ the Picard iteration space was
\[
	X_T^{p,q} = L^\infty_t M_{p,q}([0,T]\times\R) \cap L_t^{\frac{8}{\theta}}[M_{\tilde p,1},L^4]_\theta([0,T]\times \R).
\]
Note that we could equally well have iterated in
\[
	\tilde X_T^{p,q} = L^\infty_t M_{p,q}([0,T]\times\R) \cap L_t^{\frac{4}{\theta}}[M_{\tilde p,1},L^\infty]_\theta([0,T]\times \R),
\]
because the Strichartz estimates holds true up to $L^4_t L^\infty_x$ in one dimension. With this at hand, we can prove:

\begin{lemma}\label{lemma:gwprightup}
    Cubic NLS is globally wellposed in $M_{p,q}(\R)$, $1 \leq p < 2$, $1/q \geq 1/p$.
\end{lemma}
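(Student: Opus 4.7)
The overall plan is to combine the global $M_{2,q}$ bound from Theorem \ref{thm:gwpp=2} with a Gronwall-type persistence argument for the $M_{p,q}$ norm, reducing the question, via the blow-up alternative in Corollary \ref{cor:blowupalternative}, to local-in-time control of $\|u(t)\|_{M_{\infty,1}}$ as in Lemma \ref{lemma:gwprightup1}. Since $p\leq 2$, Theorem \ref{thm:embedding} gives the embedding $M_{p,q}\subset M_{2,q}$, so Theorem \ref{thm:gwpp=2} provides an a priori bound $\|u(t)\|_{M_{2,q}}\leq C(\|u_0\|_{M_{2,q}})$ valid for all $t\in\R$ (extended from Schwartz data by density since $q<\infty$).

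Next I would combine Duhamel's formula, the propagator bound \eqref{eq:modulationstrichartz4}, and the trilinear estimate obtained from \eqref{eq:genhoelder} with $(p_i,q_i)=(p,q),(\infty,1),(\infty,1)$, namely
\[
\||u|^2u\|_{M_{p,q}}\lesssim \|u\|_{M_{p,q}}\|u\|_{M_{\infty,1}}^2,
\]
to deduce
\[
\|u(t)\|_{M_{p,q}}\lesssim (1+T)^{|1/2-1/p|}\|u_0\|_{M_{p,q}}+C_T\int_0^t \|u(s)\|_{M_{p,q}}\|u(s)\|_{M_{\infty,1}}^2\,ds.
\]
Lemma \ref{lemma:gronwall} then bounds $\|u(t)\|_{M_{p,q}}$ on $[0,T]$ in terms of $\int_0^T\|u(s)\|_{M_{\infty,1}}^2\,ds$, so by Corollary \ref{cor:blowupalternative} it suffices to control the latter on any finite interval.

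For $q=1$ this is immediate: the chain $M_{p,1}\subset M_{2,1}\subset M_{\infty,1}$ combined with the $M_{2,1}$ part of Theorem \ref{thm:gwpp=2} does the job, as already remarked in the paragraph preceding the Lemma. The main obstacle lies in the range $1<q\leq p<2$, where $M_{p,q}$ does \emph{not} embed into $M_{\infty,1}$ at all and no pointwise-in-time estimate can reach that norm from the initial data alone. My plan here would be to work inside the alternative iteration space $\tilde X^{p,q}_T$, whose second factor $L^{4/\theta}_t[M_{\tilde p,1},L^\infty]_\theta$ is contained in $L^{4/\theta}_t L^\infty_x$ (using $M_{\tilde p,1}\subset L^\infty$) and therefore transports Strichartz-type information. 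Because $q<2$ gives $M_{p,q}\subset L^2$, global well-posedness of cubic NLS in $L^2$ together with the $(4,\infty)$ Strichartz pair provides a global-in-time bound on $\|u\|_{L^4_t L^\infty_x}$. Iterating this with the trilinear estimate in the $\tilde X^{p,q}_T$-framework and a continuity argument should upgrade the local $\tilde X^{p,q}_T$-bound to a bound on $[0,T]$, and in particular yield $\int_0^T\|u(s)\|_{M_{\infty,1}}^2\,ds<\infty$, closing Gronwall and completing the proof. This last step—transferring the $L^2$-Strichartz global information into local-in-time $M_{\infty,1}$ control compatible with the modulation-space iteration—is where I expect the principal technical difficulty.
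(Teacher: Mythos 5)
Your overall reduction---embedding $M_{p,q}\subset M_{2,q}$, invoking Theorem \ref{thm:gwpp=2} for an a priori $M_{2,q}$ bound, and closing with Gronwall plus the blow-up alternative---matches the paper's strategy, and you correctly identify where the difficulty sits. But the step you defer to the end is not merely technically delicate; as formulated it cannot work. Your Gronwall integrand is $\|u(s)\|_{M_{\infty,1}}^2$, and for $q>1$ nothing in the local theory produces $M_{\infty,1}$ control. The second factor of $\tilde X^{p,q}_T$ only controls $u$ in $L^{4/\theta}_t[M_{\tilde p,1},L^\infty]_\theta$, and $[M_{\tilde p,1},L^\infty]_\theta$ does not embed into $M_{\infty,1}$: since $L^\infty\subset M_{\infty,\infty}$, complex interpolation only places this space inside $M_{p_\theta,q_\theta}$ with $1/q_\theta=1-\theta$, so the $\ell^1$ summability over frequency blocks required for $M_{\infty,1}$ is lost precisely when $\theta>0$, i.e.\ when $q>1$. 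A global $L^4_tL^\infty_x$ Strichartz bound does not repair this either, because the embedding $M_{\infty,1}\subset L^\infty$ goes the wrong way; pointwise boundedness says nothing about $\ell^1$ summability of the frequency blocks. So the quantity $\int_0^T\|u(s)\|_{M_{\infty,1}}^2\,ds$ you need to close Gronwall is simply not accessible from the data $u_0\in M_{p,q}$ with $q>1$.

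The paper's resolution is to weaken the trilinear estimate rather than to strengthen the control on $u$: interpolating (Theorem \ref{thm:multilinearinterpolation}) between the algebra bound $\|u_1\bar u_2u_3\|_{M_{\tilde p,1}}\lesssim\|u_1\|_{M_{\infty,1}}\|u_2\|_{M_{\infty,1}}\|u_3\|_{M_{\tilde p,1}}$ and H\"older's bound $\|u_1\bar u_2u_3\|_{L^2}\leq\|u_1\|_{L^\infty}\|u_2\|_{L^\infty}\|u_3\|_{L^2}$ gives
\[
\|u_1\bar u_2u_3\|_{M_{p,q}}\lesssim\|u_1\|_{[M_{\infty,1},L^\infty]_\theta}\|u_2\|_{[M_{\infty,1},L^\infty]_\theta}\|u_3\|_{M_{p,q}},
\]
so that the controlling quantity in Gronwall becomes $\|u\|^2_{L^2([0,T],[M_{\infty,1},L^\infty]_\theta)}$. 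This quantity \emph{is} available: the local solution lies in $[L^\infty_tM_{2,1},L^4_tL^\infty]_\theta\subset L^2_t[M_{\infty,1},L^\infty]_\theta$ with a bound in terms of $\|u_0\|_{M_{2,q}}$, and the global $M_{2,q}$ control from Theorem \ref{thm:gwpp=2} lets one glue these local bounds over $[0,T]$. If you replace your trilinear estimate by this interpolated one, the remainder of your argument goes through essentially as you describe.
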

\begin{proof}
We interpolate the multilinear estimates
    \begin{align*}
        \|u_1 \bar u_2 u_3 \|_{M_{\tilde p,1}} &\lesssim \|u_1\|_{M_{\infty,1}} \|u_2\|_{M_{\infty,1}} \|u_3\|_{M_{\tilde p,1}},\\
        \|u_1 \bar u_2 u_3 \|_{L^2} &\leq \|u_1\|_{L^\infty}\|u_2\|_{L^\infty}\|u_3\|_{L^2}
    \end{align*}
to obtain
\begin{equation}
	\|u_1 \bar u_2 u_3 \|_{M_{p,q}} \lesssim \|u_1\|_{[M_{\infty,1},L^\infty]_\theta} \|u_2\|_{[M_{\infty,1},L^\infty]_\theta} \|u_3\|_{M_{p,q}},
\end{equation}
where $p,q,\theta$ are exactly as in Theorem \ref{thm:lwpuppertriangle}. This shows
\begin{align*}
	\Big\|\int_0^t S(t-s)|u|^2 u\,ds\Big\|_{M^{p,q}} \lesssim \int_0^t \|u\|^2_{[M_{\infty,1},L^\infty]_\theta} \|u\|_{M_{p,q}}\,ds,
\end{align*}
and we can conclude as in Lemma \ref{lemma:gwprightup1} if we know that $\|u\|_{L^2([0,T],[M_{\infty,1},L^\infty]_\theta)}$ remains finite. Now with continuous inclusion with $T$-dependent constants,
\[
\begin{split}
	[L^\infty([0,T],M_{2,1}),L^4([0,T],L^\infty)]_\theta &\subset [L^2([0,T],M_{2,1}),L^2([0,T],L^\infty)]_\theta \\
	&= L^2([0,T],[M_{2,1},L^\infty]_\theta) \\
	&\subset L^2([0,T],[M_{\infty,1},L^\infty]_\theta).
\end{split}
\]
Since we could have chosen the left-hand side as the iteration space in Theorem \ref{thm:lwpuppertriangle} we conclude that the solution has locally bounded norm in this space with estimate
\[
	\|u\|_{[L^\infty([0,1],M_{2,1}),L^4([0,1],L^\infty)]_\theta} \lesssim \|u_0\|_{M^{2,q}}.
\]
Note that $p < 2$, hence $M^{p,q} \subset M^{2,q}$. The $M^{2,q}$ norm does not blow up, hence the norm on the left-hand side does not blow up even if we replace $[0,1]$ by a time interval $[0,T]$ as we can just glue together solutions.
\end{proof}

\subsection{Global Wellposedness if $p > 2$}
In the case $u_0 \in M_{p,1}$ with $2 < p < \infty$, we want to use techniques inspired by \cite{dodson}. Similar results were obtained for $p = 4$ and $p = 6$ in \cite{schippa}. Note though that the spaces $M^s_{4,2}$ and $M^s_{6,2}$ with $s > 3/2$ embed into $M^1_{4,2}$ and $M^1_{6,2}$ in which we will prove global wellposedness. The goal is to make use of the fact that there is a number $N$ such that for $n \geq N$, the $n$th Picard iterates will be in an $L^2$ based space. Indeed, if we keep the notation from Theorem \ref{thm:QWP}, then by the multilinear estimate \eqref{eq:genhoelder},
\[
	\|A_3(u_0)\|_{L^\infty([0,1],M_{2,1})} \lesssim \||S(t)u_0|^2S(t)u_0\|_{L^\infty([0,1],M_{2,1})} \lesssim \|u_0\|^3_{M_{6,1}},
\]
and similarly for each natural number of the form $4n+2, n \in \N_0$, we have
\begin{equation}\label{eq:estimateAn}
	\|A_{2n+1}(u_0)\|_{L^\infty([0,1],M_{2,1})} \lesssim_n \|u_0\|^{2n+1}_{M_{4n+2,1}}.
\end{equation}
More generally, we find:
\begin{lemma}\label{lemma:picarditerates}
	Given odd natural numbers $k_1, k_2, k_3 \in \N$ and $2m + 1 = k_1 + k_2 + k_3$, and $n \in \N$ with $m \geq n$, the following estimates hold:
	\begin{align}
		\|N(A_{k_1},A_{k_2},A_{k_3})\|_{L^\infty([0,T],M_{p,1})} &\lesssim_m T^{m}\langle T\rangle^{m+1/2}\|u_0\|_{M_{p(2m+1),1}}^{2m+1}\label{eq:picard1}\\
		\|A_{2n+1}\|_{L^\infty([0,1],M_{2,1})} &\lesssim_n T^{n}\langle T\rangle^{n+1/2}\|u_0\|^{2n+1}_{M_{4n+2,1}}\label{eq:picard2}\\
		\|A_{2m+1}\|_{L^\infty([0,1],M_{2,1})} &\lesssim_m T^{m}\langle T\rangle^{m+1/2}\|u_0\|^{2n+1}_{M_{4n+2,1}}\|u_0\|^{2(m-n)}_{M_{\infty,1}}\label{eq:picard3}.
	\end{align}
\end{lemma}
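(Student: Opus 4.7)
The plan is to prove all three estimates simultaneously by strong induction on the total degree of homogeneity $2m+1$. The trivial base case is $\|A_1\|_{L^\infty_T M_{p,1}} = \|S(t)u_0\|_{L^\infty_T M_{p,1}} \lesssim \langle T\rangle^{|1/2-1/p|}\|u_0\|_{M_{p,1}}$, directly from the propagator bound \eqref{eq:modulationstrichartz4}. For the inductive step, one unfolds the recursive definition $A_k = \sum_{k_1+k_2+k_3=k} N(A_{k_1},A_{k_2},A_{k_3})$ and combines the algebra property \eqref{eq:genhoelder} in $M_{p,1}$ with \eqref{eq:modulationstrichartz4}.

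For \eqref{eq:picard1}, the crucial ingredient is the H\"older split $1/p = 1/p_1 + 1/p_2 + 1/p_3$ with $p_i = p(2m+1)/k_i$, engineered so that the inductive hypothesis applied to each factor produces exactly $\|u_0\|^{k_i}_{M_{p_i k_i,1}} = \|u_0\|^{k_i}_{M_{p(2m+1),1}}$. Starting from
\[
    \|N(A_{k_1},A_{k_2},A_{k_3})(t)\|_{M_{p,1}} \leq 2\int_0^t \langle t-\tau\rangle^{|1/2-1/p|}\|A_{k_1}\bar A_{k_2}A_{k_3}(\tau)\|_{M_{p,1}}\,d\tau,
\]
applying \eqref{eq:genhoelder} with $q_i=1$ and substituting the inductive bounds on each $A_{k_i}(\tau)$ produces a time integrand of the shape $\tau^{m-1}\langle\tau\rangle^{m+1/2}$. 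The $m$-fold nested Duhamel integrals contribute a volume factor $T^m$ from the $m$-simplex, while the worst-case propagator exponents are absorbed into an overall $\langle T\rangle^{m+1/2}$.

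Estimate \eqref{eq:picard2} is then the specialization $p=2$ of \eqref{eq:picard1}, with $p(2n+1)=4n+2$; here the outer propagator bound is in fact trivial by the isometry \eqref{eq:modulationstrichartz3}, which gives extra room in the $\langle T\rangle$ accounting. For \eqref{eq:picard3} one repeats the same induction with a modified H\"older split: within the ternary Duhamel tree, $2(m-n)$ of the $u_0$ leaves are assigned to $M_{\infty,1}$ and the remaining $2n+1$ leaves to $M_{4n+2,1}$. Whenever a subtree of some $A_{k_i}$ consists entirely of $M_{\infty,1}$ leaves, we bound it through the Banach algebra structure of $M_{\infty,1}$ together with \eqref{eq:modulationstrichartz4}, and otherwise invoke \eqref{eq:picard2}; this preserves the $T^m\langle T\rangle^{m+1/2}$ shape and produces the mixed norm on the right-hand side.

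The main obstacle is purely combinatorial: one has to bookkeep carefully how the factors of $T$, of $\langle T\rangle$, and of the norms $\|u_0\|_{M_{\cdot,1}}$ distribute across the ternary Duhamel tree so that the exponents match the claimed bounds in each of the three cases. No genuinely new analytic ingredient is needed beyond the algebra property \eqref{eq:genhoelder} and the modulation propagator bound \eqref{eq:modulationstrichartz4}.
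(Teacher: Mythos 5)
Your proposal is correct and follows essentially the same route as the paper: iterate the Duhamel bound $\|\int_0^t S(t-s)fgh\,ds\|_{M_{p,1}}\lesssim T\langle T\rangle^{1/2}\prod_i\|\cdot\|_{M_{p_i,1}}$ down the ternary tree of the recursion with the Hölder split arranged so that all leaves land in $M_{(2m+1)p,1}$, then absorb the propagator losses at the leaves, and obtain \eqref{eq:picard3} by rerouting $2(m-n)$ leaves through the $M_{\infty,1}$ algebra property. The only cosmetic difference is that you organize the bookkeeping as a strong induction with the explicit exponents $p_i=p(2m+1)/k_i$, where the paper simply ``plugs in iteratively''.
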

\begin{proof}
	We use the estimate for $0 \leq t \leq T$
	\[
	\begin{split}
		\|N(A_{k_1},A_{k_2},A_{k_3})\|_{M_{p,1}} &= \Big\|\int_0^t S(t-s)A_{k_1}\bar A_{k_2}A_{k_3}\,ds\Big\|_{M_{p,1}} \\
		&\lesssim T\langle T\rangle^{1/2}\|A_{k_1}\|_{M_{p_1,1}}\|A_{k_2}\|_{M_{p_2,1}}\|A_{k_3}\|_{M_{p_3,1}},
	\end{split}
	\]
	provided $\sum_i 1/p_i = 1/p$. Plugging in the definition of $A_{k_i}$ from Theorem \ref{thm:QWP} iteratively shows that after $m$ iterations we arrive at
	\[
		\|A_{2m+1}(u_0)\|_{M_{p,1}}+\|N(A_{k_1},A_{k_2},A_{k_3})\|_{M_{p,1}} \lesssim_n T^m \langle T\rangle^{\frac m2}\|Lu_0\|_{M_{(2m+1)p,1}}^{2m+1},
	\]
	if $k_1 + k_2 + k_3 = 2m+1$. Together with 
	\[
		\|Lu_0\|_{M_{(2m+1)p,1}}^{2m+1} \lesssim \langle T\rangle^{\frac{2m+1}2} \|u_0\|_{M_{(2m+1)p,1}}^{2m+1},
	\]
	\eqref{eq:picard1} and \eqref{eq:picard2} follow. To prove \eqref{eq:picard3} we additionally use
	\[
		\|uvw\|_{M_{4n+2,1}} \lesssim \|u\|_{M_{\infty,1}}\|v\|_{M_{\infty,1}}\|w\|_{M_{4n+2,1}},
	\]
	once we reached $p = 4n+2$ in the iteration.
\end{proof}

As is shown for the usual Picard iteration (see for example Theorem 3 in \cite{bejenaru-tao}), and because there is no loss in the constant from Hölder's inequality \eqref{eq:genhoelder}, the constant in \eqref{eq:estimateAn} grows at most exponentially in $n$ meaning that we are able to sum the remainder term. This motivates that we will be able to construct a solution of NLS of the form
\begin{equation}\label{eq:formofu}
	u(t) = \sum_{k=1}^{2n-1} A_k(u_0) + v = \tilde u + v,
\end{equation}
where
\[
\begin{split}
	\tilde u \in C^0([0,T],M_{4n+2,1})\quad \text{and} \quad v \in C^0([0,T],M_{2,1}).
\end{split}
\]
If $u$ has the form \eqref{eq:formofu} and solves NLS then $v$ will solve the difference NLS
\begin{equation}\label{eq:nlsforv2}
	\begin{cases}iv_t + v_{xx} &= |u|^2u - G(t),\\
	v(0) &= 0
	\end{cases}
\end{equation}
where $G(t)$ is given by
\[
	G(t) = i\tilde u_t + \tilde u_{xx} = \sum_{k=3}^{2n-1} \sum_{k_1 + k_2 + k_3 = k} A_{k_1}(u_0)\bar A_{k_2}(u_0)A_{k_3}(u_0).
\]
As a fixed point equation this equation reads
\begin{equation}\label{eq:nlsforv}
	v(t) = N\Big(v+\sum_{k=1}^{2n-1} A_k(u_0), v+\sum_{k=1}^{2n-1} A_k(u_0),v+\sum_{k=1}^{2n-1} A_k(u_0)\Big) - \sum_{k=3}^{2n-1} A_k(u_0).
\end{equation}
The existence and uniqueness issue for $v$ is covered in the following lemma.
\begin{lemma}\label{lemma:localmodulation}
	Let $u_0 \in M_{4n+2,1}$. There exists $T>0$ and a solution $v \in C^0([0,T],M_{2,1})$ of \eqref{eq:nlsforv}. The solution is unique in $L^\infty([0,T],M_{4n+2})$. If $T^*$ denotes its maximal time of existence, then either $T^* = \infty$ or
	\[
		\limsup_{t \to T^*} \|v(t)\|_{M_{4n+2,1}} = \infty.
	\]
\end{lemma}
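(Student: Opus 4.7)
The plan is to exploit a cancellation hidden in \eqref{eq:nlsforv} that leaves on the right-hand side only terms which either (i) are high-order Picard remainders automatically landing in $M_{2,1}$, or (ii) contain at least one factor of $v$. The resulting equation is then solved by Banach fixed point in $C^0([0,T], M_{2,1})$.

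\emph{Algebraic reduction.} Expanding $N(v + \tilde u, v + \tilde u, v + \tilde u)$ trilinearly and recalling the recursion $A_k(u_0) = \sum_{k_1+k_2+k_3 = k} N(A_{k_1}, A_{k_2}, A_{k_3})$ (valid for $k \geq 3$ with indices $\geq 1$), the pure-$\tilde u$ contribution reads
\[
\sum_{k = 3}^{6n-3}\, \sum_{\substack{k_1+k_2+k_3 = k\\ 1 \leq k_i \leq 2n-1}} N(A_{k_1}, A_{k_2}, A_{k_3}).
\]
For $3 \leq k \leq 2n-1$ the restriction $k_i \leq 2n-1$ is vacuous, so this piece equals $\sum_{k=3}^{2n-1} A_k(u_0)$ and cancels the subtracted term in \eqref{eq:nlsforv}. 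Hence the equation reduces to $v = R + Q(v)$, where $R$ is the remaining sum over $2n+1 \leq k \leq 6n-3$ and $Q(v)$ collects the trilinear terms containing at least one factor $v$.

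\emph{Fixed point in $C^0([0,T], M_{2,1})$.} Each triple appearing in $R$ has total order $2m+1$ with $m \geq n$, so \eqref{eq:picard1} at $p = 2$, combined with the embedding $M_{4n+2, 1} \hookrightarrow M_{2(2m+1), 1}$ available for $m \geq n$, gives $\|R(t)\|_{M_{2,1}} \lesssim_n T^n \langle T \rangle^{n+1/2} P(\|u_0\|_{M_{4n+2, 1}})$ for a suitable polynomial $P$. For $Q(v)$, a generic summand contains $a \in \{1, 2, 3\}$ copies of $v$ and $3 - a$ Picard iterates; placing one $v$ in $M_{2, 1}$ and all other factors in $M_{\infty, 1}$ (using $M_{2,1} \hookrightarrow M_{\infty, 1}$ for the additional copies of $v$), together with \eqref{eq:genhoelder} and \eqref{eq:modulationstrichartz1}, yields
\[
\|Q(v)\|_{L^\infty_t M_{2,1}} \lesssim T \langle T \rangle^{1/2} \sum_{a=1}^{3} C_a(u_0)\, \|v\|_{L^\infty_t M_{2,1}}^{a},
\]
with $C_a(u_0)$ polynomial in $\|u_0\|_{M_{4n+2, 1}}$. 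Choosing $T$ small depending only on $\|u_0\|_{M_{4n+2, 1}}$ makes $v \mapsto R + Q(v)$ a contraction on the ball of radius $2 \|R\|_{L^\infty_t M_{2,1}}$ in $C^0([0,T], M_{2,1})$, producing a unique fixed point.

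\emph{Uniqueness in $L^\infty M_{4n+2, 1}$ and blow-up alternative.} Since $M_{4n+2, 1}$ is a Banach algebra and contains $M_{2, 1}$, the same computation, now placing every factor in $M_{4n+2, 1}$, shows $v \mapsto R + Q(v)$ is a contraction on a small $L^\infty M_{4n+2, 1}$ ball as well, whose fixed point must coincide with the one found above; a standard difference-of-solutions Gronwall estimate then upgrades this to uniqueness for arbitrary $L^\infty M_{4n+2, 1}$ solutions. Finally, since the guaranteed existence time depends only on $\|u_0\|_{M_{4n+2, 1}}$ and on a bound for $v(t_0)$ in $M_{4n+2, 1}$, if $T^* < \infty$ were compatible with $\limsup_{t \to T^*} \|v(t)\|_{M_{4n+2, 1}} < \infty$ we could restart the Picard expansion from a time $t_0$ close to $T^*$ with new data $u(t_0) = \tilde u(t_0) + v(t_0)$ and extend $v$ past $T^*$, contradicting maximality. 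The main obstacle is the bookkeeping in the algebraic reduction, i.e.\ matching the number of Picard iterates we subtract with the threshold $4n+2$ so that the remainder $R$ genuinely lands in $M_{2,1}$; once this is in place, the multilinear estimates and the weaker-norm blow-up criterion follow from the now-standard scheme.
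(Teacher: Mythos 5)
Your proposal is correct and follows essentially the same route as the paper: the same cancellation of $\sum_{k=3}^{2n-1}A_k(u_0)$ against the pure-$\tilde u$ part of the trilinear term, the same use of Lemma \ref{lemma:picarditerates} to place the order-$\geq 2n+1$ remainder in $M_{2,1}$, a contraction in $C^0([0,T],M_{2,1})$ with the Picard iterates measured in $M_{\infty,1}$, and the same restart argument for the blow-up alternative. The only (immaterial) deviation is the uniqueness step, where you rerun the fixed point for $v$ in $M_{4n+2,1}$ plus a Gronwall difference estimate, whereas the paper constructs the NLS solution $u$ directly in $M_{4n+2,1}$ via the algebra property and transfers uniqueness through the finite sum of iterates.
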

\begin{proof}
	We ignore permutations of the arguments of $N$ and rewrite \eqref{eq:nlsforv} as
	\[
	\begin{split}
		v(t) &= N\Big(v+\sum_{k=1}^{2n-1} A_k(u_0), v+\sum_{k=1}^{2n-1} A_k(u_0),v+\sum_{k=1}^{2n-1} A_k(u_0)\Big) - \sum_{k=3}^{2n-1} A_k(u_0)\\
		&= N(v, v,v) + N\Big(v, v,\sum_{k=1}^{2n-1} A_k(u_0)\Big)+ N\Big(v,\sum_{k=1}^{2n-1} A_k(u_0),\sum_{k=1}^{2n-1} A_k(u_0)\Big)\\
		&\quad + N\Big(\sum_{k=1}^{2n-1} A_k(u_0),\sum_{k=1}^{2n-1} A_k(u_0),\sum_{k=1}^{2n-1} A_k(u_0)\Big) - \sum_{k=3}^{2n-1} A_k(u_0).
	\end{split}
	\]
	If we define the function in the last line to be $F(t,x)$, then we can show
	\begin{equation}\label{eq:estimateF}
		\|F\|_{L^\infty([0,T],M_{2,1})} \lesssim T^n\langle T\rangle^{n+1/2}\|u_0\|_{M_{4n+2,1}}^{2n+1} + T^{3n-2}\langle T\rangle^{3n-3/2}\|u_0\|_{M_{4n+2,1}}^{6n-3}.
	\end{equation}
	Indeed, we rewrite
	\[
	\begin{split}
		N\Big(\sum_{k=1}^{2n-1} A_k(u_0)&,\sum_{k=1}^{2n-1} A_k(u_0),\sum_{k=1}^{2n-1} A_k(u_0)\Big) \\
		&\qquad = \sum_{m=1}^{2n-1}\sum_{k_1+k_2+k_3 = m} N(A_{k_1}(u_0),A_{k_2}(u_0),A_{k_3}(u_0)) + F(t,x)\\
		&\qquad =\sum_{k=3}^{2n-1} A_k(u_0) + F(t,x),
	\end{split}
	\]
	and use Lemma \ref{lemma:picarditerates} to estimate. In the same fashion, we find
	\[
		\Big\|\sum_{k=1}^{2n-1} A_k(u_0)\Big\|_{L^\infty([0,T],M_{\infty,1})} \lesssim \langle T\rangle^{1/2}\|u_0\|_{M_{4n+2,1}} + T^{n-1}\langle T\rangle^{n-1/2}\|u_0\|_{M_{4n+2,1}}^{2n-1}.
	\]
	This shows that if $\Phi(v)$ is the right-hand side in \eqref{eq:nlsforv}, and if $\|v\|_{L^\infty([0,T],M_{2,1})} \leq R$, we have
	\[
	\begin{split}
		\|\Phi(v)\|_{L^\infty([0,T],M_{2,1})} &\lesssim TR^3 + TR\langle T\rangle\|u_0\|^2_{M_{4n+2,1}} + T^{2n-1}R\langle T\rangle^{2n-1}\|u_0\|_{M_{4n+2,1}}^{4n-2}\\
		&\qquad + T^n\langle T\rangle^{n+1/2}\|u_0\|_{M_{4n+2,1}}^{2n+1} + T^{3n-2}\langle T\rangle^{3n-3/2}\|u_0\|_{M_{4n+2,1}}^{6n-3}.
	\end{split}
	\]
	Choosing $T \lesssim \min(1,\|u_0\|_{M_{4n+2,1}}^{-2})$ and $R \sim \|u_0\|_{M_{4n+2,1}}$ makes $\Phi$ into a mapping
	\[
		\Phi: \{\|v\|_{L^\infty([0,T],M_{2,1})} \leq R\} \to \{\|v\|_{L^\infty([0,T],M_{2,1})} \leq R\}.
	\]
	Since we can obtain a similar estimate on $\Phi(v_1)-\Phi(v_2)$ via polarization, this shows that we can employ the Banach fixed point argument to get a unique solution $v \in L^\infty([0,T],M_{2,1})$ of \eqref{eq:nlsforv}. Since we could have iterated in $C^0([0,T],M_{2,1})$ as well, we obtain continuity of $v$.
	
	To prove the stronger blow-up criterion, if $\|v(T)\|_{M_{4n+2,1}}$ stays bounded close to $T^*$, then we can use $\tilde u(T) + v(T) \in M_{4n+2,1}$ as new initial data for NLS. But then we transform this into an equation for $v$ again and obtain a small $\delta > 0$ such that we can solve \eqref{eq:nlsforv} on $[T,T + \delta]$ with $T + \delta > T^*$, yielding a contradiction to the maximality.
	
	For the stronger uniqueness statement we note that we can also construct a unique solution $u$ of NLS in $L^\infty([0,T],M_{4n+2,1})$ directly due to its algebra property. Since $u$ and $v$ only differ by finitely many terms which do not blow up in $M_{4n+2,1}$, the uniqueness from $u$ transfers too.
\end{proof}

To go from local to global we need to bound a controlling norm for large times. Our controlling norm will we the $H^1$ norm and the way to bound it will be via estimating the derivative of the time-dependent Hamiltonian and using a Gronwall argument. Since we need the Hamiltonian to control the energy, the method only applies in the defocusing case. This method has also been used in \cite{schippa} as well as in \cite{klaus2021} to prove global wellposedness of NLS equations in $H^{1}(\R) + H^{s}(\T)$, and it proves to be valuable here as well. More precisely, the difference NLS equation \eqref{eq:nlsforv2} is Hamiltonian with respect to
\begin{equation*}
    H(t,v) = \int \frac{1}{2} |v_x|^2 + \frac{1}{4}\big(|v+\tilde u(t)|^{4} - |\tilde u(t)|^{4} - 4\real(\bar vG(t)) \big) \, dx.
\end{equation*}
From the embedding $H^1 \subset M_{2,1} \subset M_{4n+2,1}$ and Lemma \ref{lemma:localmodulation} we see that a bound on the $H^1$ norm suffices to upgrade our local to a global result. Arguing as in Lemma \ref{lemma:gwprightup1}, we find that if we start with one more derivative, i.e. take $u_0 \in M^1_{4n+2,1}$, then the same holds for the solution $u$.

We first show that when adding an $L^2$ norm, the Hamiltonian is strong enough to control the $H^1$ norm:
\begin{lemma}
	For all $T > 0$ and $u_0 \in M_{4n+2,1}$ there exists a constant $C > 0$ such that
	\begin{equation}\label{eq:equivalentenergies}
		E(v)+\|v\|_{L^2}^2 \lesssim H(t,v) + \|v\|_{L^2}^2+1 \lesssim E(v) + \|v\|_{L^2}^2 + 1,
	\end{equation}
	where
	\[
		E(v) = \int \frac{1}{2} |v_x|^2 + \frac{1}{4}|v|^4\,dx.
	\]
	The constant depends on $n$, $\|u_0\|_{M_{4n+2,1}}$ and $T$.
\end{lemma}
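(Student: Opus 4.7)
The plan is to expand $H(t,v)-E(v)$ algebraically and estimate each resulting piece, exploiting the fact that every Picard iterate $A_k$ appearing in $\tilde u = \sum_{k=1}^{2n-1} A_k$ lies on $[0,T]$ in $M_{(4n+2)/k,1}$ by Lemma \ref{lemma:picarditerates}, hence in $L^{(4n+2)/k}\cap L^\infty$. Using $|a+b|^4 = (|a|^2 + 2\real(a\bar b) + |b|^2)^2$ to expand $|v+\tilde u|^4 - |v|^4 - |\tilde u|^4$ and rewriting $\real(v\bar{\tilde u})|\tilde u|^2 = \real(\bar v\,|\tilde u|^2\tilde u)$, one arrives at
\[
H(t,v)-E(v) = \int \Big[(\real(v\bar{\tilde u}))^2 + \tfrac12|v|^2|\tilde u|^2 + \real(v\bar{\tilde u})|v|^2 + \real\big(\bar v(c|\tilde u|^2\tilde u - G)\big)\Big]\,dx,
\]
for a constant $c$ set by the coefficient convention used in \eqref{eq:NLS} and \eqref{eq:nlsforv2}.

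The first two integrands are pointwise $O(|\tilde u|^2|v|^2)$ and yield at most $C_T\|v\|_{L^2}^2$, since $\|\tilde u\|_{L^\infty([0,T]\times\R)}$ is controlled by a polynomial in $\|u_0\|_{M_{4n+2,1}}$. The cubic-in-$v$ piece is handled via $\int|v|^3\,dx \leq \|v\|_{L^2}\|v\|_{L^4}^2$ together with Young's inequality, producing $\varepsilon\|v\|_{L^4}^4 + C_{\varepsilon,T}\|v\|_{L^2}^2$; picking $\varepsilon$ small absorbs the quartic term into $E(v)$. So both inequalities of \eqref{eq:equivalentenergies} reduce to bounding the linear-in-$v$ remainder by $\tfrac12\|v\|_{L^2}^2 + C_T$.

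This last step is the main obstacle and the one place where the truncation level $2n-1$ is used in full strength. Expanding $c|\tilde u|^2\tilde u$ and comparing with $G = c\sum_{k=3}^{2n-1}\sum_{k_1+k_2+k_3=k}A_{k_1}\bar A_{k_2}A_{k_3}$ one sees that $c|\tilde u|^2\tilde u - G$ is a finite sum of trilinear terms $A_{k_1}\bar A_{k_2}A_{k_3}$ with each $k_i\in\{1,\dots,2n-1\}$ and $k_1+k_2+k_3\geq 2n+1$. For each such triple I would choose Hölder exponents $p_i\in[(4n+2)/k_i,\infty]$ with $1/p_1+1/p_2+1/p_3 = 1/2$, which is possible precisely because $(k_1+k_2+k_3)/(4n+2)\geq 1/2$ -- an inequality equivalent to $k_1+k_2+k_3\geq 2n+1$. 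Lemma \ref{lemma:picarditerates} combined with $M_{q,1}\subset L^q\cap L^\infty$ then bounds each $\|A_{k_i}\|_{L^{p_i}}$ in terms of $\|u_0\|_{M_{4n+2,1}}$ and polynomial factors in $T$, so Hölder gives $\|c|\tilde u|^2\tilde u - G\|_{L^2}\leq C(n,T,\|u_0\|_{M_{4n+2,1}})$, and Cauchy--Schwarz finally yields $\big|\int \bar v(c|\tilde u|^2\tilde u - G)\,dx\big| \leq \tfrac12\|v\|_{L^2}^2 + \tfrac12 C^2$.

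Collecting the three estimates produces $|H(t,v)-E(v)| \leq \tfrac12 E(v) + C\|v\|_{L^2}^2 + C$ for some constant $C$ depending on $n$, $T$ and $\|u_0\|_{M_{4n+2,1}}$, which rearranges to both inequalities of \eqref{eq:equivalentenergies}. The heart of the argument is the exponent matching in the Hölder step: the identity $(4n+2)/(2n+1)=2$ makes the $L^2$-bound on the residue $c|\tilde u|^2\tilde u - G$ scale-critical, which is exactly why $\tilde u$ is defined by subtracting off the Picard iterates up through order $2n-1$; any fewer, and the linear remainder could no longer be paired with $v\in L^2$.
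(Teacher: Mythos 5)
Your proof is correct and follows essentially the same route as the paper: you bound the quadratic- and cubic-in-$v$ parts of $H-E$ by $\varepsilon E(v)+C\|v\|_{L^2}^2$ via the $L^\infty$ control of $\tilde u$, and you pair $v$ against the residue $|\tilde u|^2\tilde u - G$, which lies in $L^2$ precisely because it only contains trilinear terms of total homogeneity at least $2n+1$. The only cosmetic difference is that you carry out the H\"older exponent matching directly in $L^p$ spaces, whereas the paper invokes the modulation-space H\"older inequality through the estimate on $R$ from the proof of Lemma \ref{lemma:localmodulation}.
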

\begin{proof}
	For $0 \leq t \leq T$,
	\[
	\begin{split}
		\int |v+\tilde u|^{4} - |v|^4 - |\tilde u|^{4} - &4\real(|\tilde u|^2 \tilde u\bar v)\,dx \\
		&\leq c\int |v|^2|\tilde u|(|v|+|\tilde u|)\,dx\\
		&\leq c(\|\tilde u\|_{L^\infty}^2 \|v\|_{L^2}^2 + \|\tilde u\|_{L^\infty}\|v\|_{L^3}^3)\\
		&\leq c(\|\tilde u\|_{L^\infty}^2 \|v\|_{L^2}^2 + \|\tilde u\|_{L^\infty}\|v\|_{L^2}\|v\|_{L^4}^2)\\
		&\leq (1+(C(\varepsilon))\|\tilde u\|_{L^\infty}^2 \|v\|_{L^2}^2 + \varepsilon E(v).
	\end{split}
	\]
	This term is fine due to the estimate $\|\tilde u\|_{L^\infty_{t,x}} \lesssim_T \|u_0\|_{M_{4n+2,1}}$. Knowing that
	\[
		\int |\tilde u|^4\,dx \leq C(\|u_0\|_{M_{4n+2,1}},T),
	\]
	it remains to show that $|\tilde u|^2 \tilde u - G(t)$ can be estimated in $L^2$ if $u_0 \in M_{4n+2,1}$. Indeed, we rewrite it as
	\[
	\begin{split}
		|\tilde u|^2 \tilde u &= \sum_{k_1,k_2,k_3=1}^{2n-1} A_{k_1}(u_0)\bar A_{k_2}(u_0)A_{k_3}(u_0) \\
		&= \sum_{k=1}^{2n-1} \sum_{k_1 + k_2 + k_3 = k} A_{k_1}(u_0)\bar A_{k_2}(u_0)A_{k_3}(u_0) + R(t) = G(t) + R(t),
	\end{split}
	\]
	where $R(t)$ has only terms of homogeneity $2n+1 \leq k \leq 6n-3$. Thus as in the proof of Lemma \ref{lemma:localmodulation}, for all $T > 0$,
	\[
		\|R\|_{L^\infty([0,T],L^2)} \lesssim_T \|u_0\|^{2n+1}_{M_{4n+2,1}} + \|u_0\|^{6n-3}_{M_{4n+2,1}}.
	\]
	Hence
	\[
	\begin{split}
		\int \real((|\tilde u|^2\tilde u - G(t))\bar v)\,dx &\lesssim_T \|v\|_{L^2}(\|u_0\|^{2n+1}_{M_{4n+2,1}} + \|u_0\|^{6n-3}_{M_{4n+2,1}})\\
		&\leq \|v\|_{L^2}^2 + C(\|u_0\|_{M_{4n+2,1}}),
	\end{split}
	\]
	which implies \eqref{eq:equivalentenergies}.
\end{proof}

\begin{theorem}\label{thm:gwpplarge}
	Let $2 < p < \infty$ and assume that $u_0 \in M_{p,1}^1$. Then the local solution from Lemma \ref{lemma:localmodulation} exists for all times. In particular, there exists a unique global solution $u \in C^0([0,\infty),M_{p,1}^1)$ to the defocusing cubic NLS with initial data $u(0) = u_0$.
\end{theorem}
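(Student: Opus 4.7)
The plan is to combine Lemma \ref{lemma:localmodulation}, the equivalence of energies proved just above, and persistence of regularity. First, pick $n \in \N$ with $4n+2 \geq p$, so that by Theorem \ref{thm:embedding} $u_0 \in M^1_{p,1} \subset M_{4n+2,1}$, and Lemma \ref{lemma:localmodulation} furnishes a unique maximal solution $v \in C^0([0,T^*), M_{2,1})$ of the difference equation \eqref{eq:nlsforv2}, together with the blow-up alternative $\limsup_{t \to T^*}\|v(t)\|_{M_{4n+2,1}} = \infty$ whenever $T^* < \infty$. Because of the chain of embeddings $H^1 \subset M_{2,1} \subset M_{4n+2,1}$, it is enough to produce an a priori bound on $\|v(t)\|_{H^1}$ on every finite interval $[0,T]$ with $T < T^*$.

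To produce this bound, work with the Hamiltonian $H(t,v)$ introduced before the theorem; in the defocusing case, $E(v) \geq \tfrac12\|v_x\|_{L^2}^2$, so the equivalence $E(v)+\|v\|_{L^2}^2 \sim H(t,v)+\|v\|_{L^2}^2 + O(1)$ reduces the task to controlling $H(t,v) + \|v\|_{L^2}^2$. First compute
\[
\frac{d}{dt}\|v\|_{L^2}^2 = 2\im\int(|u|^2 u - G)\bar v\,dx,
\]
split $|u|^2 u - G = (|v+\tilde u|^2(v+\tilde u) - |\tilde u|^2 \tilde u) + R(t)$ with $R$ the higher-homogeneity Picard remainder already controlled in $L^\infty_t L^2$ on $[0,T]$ by the argument of the energy equivalence lemma, and bound this by $C_T(1+\|v\|_{L^2}^2+\|v\|_{H^1}^2)$ using $\tilde u \in L^\infty_{t,x}$ (from Lemma \ref{lemma:picarditerates} and $M_{4n+2,1}\subset L^\infty$). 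Next, for $\frac{d}{dt}H(t,v)$ the variational contribution in $v$ cancels along the flow \eqref{eq:nlsforv2}, leaving only the explicit time dependence,
\[
\frac{d}{dt}H(t,v) = \int \real\bigl((|u|^2 u - |\tilde u|^2\tilde u)\overline{\tilde u_t}\bigr)\,dx - \int \real(\bar v\, G_t)\,dx.
\]
Writing $\tilde u_t = i\tilde u_{xx} - iG$ and noting that $\tilde u, \tilde u_x, G, G_t$ are all finite polynomials of Picard iterates of $u_0 \in M_{4n+2,1}$, uniformly bounded on $[0,T]$ via Lemma \ref{lemma:picarditerates} and \eqref{eq:genhoelder2}, both contributions are dominated by $C_T(1+E(v)+\|v\|_{L^2}^2)$. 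Gronwall's inequality (Lemma \ref{lemma:gronwall}) then yields $\|v(t)\|_{H^1} \leq C(T, \|u_0\|_{M_{4n+2,1}})$ for all $t \in [0,T]$, contradicting the blow-up alternative and forcing $T^* = \infty$.

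To pass from global existence in $M_{4n+2,1}$ to the claimed global existence in $M^1_{p,1}$, observe that $v \in L^\infty_{\mathrm{loc}}([0,\infty),H^1) \subset L^\infty_{\mathrm{loc}}([0,\infty),M_{\infty,1})$ via $H^1 \subset M_{2,1} \subset M_{\infty,1}$ (Theorem \ref{thm:embedding}), and $\tilde u \in L^\infty_{\mathrm{loc}}([0,\infty),M_{\infty,1})$ by Lemma \ref{lemma:picarditerates} combined with $M_{4n+2,1}\subset M_{\infty,1}$, so $u = \tilde u + v \in L^\infty_{\mathrm{loc}}([0,\infty),M_{\infty,1})$. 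Local well-posedness of cubic NLS in $M^1_{p,1}$ is immediate from the algebra property of $M^1_{p,1}$, itself a consequence of \eqref{eq:genhoelder2}. Lemma \ref{lemma:gwprightup1} then upgrades this to global well-posedness in $M^1_{p,1}$ and delivers the continuity $u \in C^0([0,\infty), M^1_{p,1})$.

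The main obstacle is the time-derivative estimate on $H(t,v)$: one has to show carefully that $\frac{d}{dt}H(t,v)$ is dominated, with constants depending only on $T$ and $\|u_0\|_{M_{4n+2,1}}$, by $1+E(v)+\|v\|_{L^2}^2$. This amounts to an explicit expansion of $\tilde u_t$ and $G_t$ as finitely many Picard-type trilinear expressions in $u_0$ and application of Hölder in combination with the polynomial bounds of Lemma \ref{lemma:picarditerates}; once these pieces are in place, Gronwall and the energy equivalence close the argument.
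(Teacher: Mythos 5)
Your overall strategy coincides with the paper's: reduce to $p=4n+2$, invoke Lemma \ref{lemma:localmodulation}, control $H(t,v)+C\|v\|_{L^2}^2$ by Gronwall, and conclude via $H^1 \subset M_{2,1} \subset M_{4n+2,1}$. Your formula for $\frac{d}{dt}H(t,v)$ is correct (it is the paper's identity \eqref{eq:twosummands}, merely grouped differently). The gap sits exactly at the point you defer as ``the main obstacle'': the term $-\int\real(\bar v\,G_t)\,dx$ cannot be estimated on its own, and your claim that $G_t$ is ``uniformly bounded'' via Lemma \ref{lemma:picarditerates} is false with only one derivative on the data. Since $i\partial_t A_k + \partial_x^2 A_k = \sum_{k_1+k_2+k_3=k}A_{k_1}\bar A_{k_2}A_{k_3}$, the quantity $G_t$ contains terms $(\partial_x^2 A_{k_1})\bar A_{k_2}A_{k_3}$, and $\partial_x^2 A_1 = \partial_x^2 S(t)u_0$ is merely a distribution for $u_0\in M^1_{4n+2,1}$. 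One must integrate by parts to move one derivative onto $v$; but then Cauchy--Schwarz requires $(\partial_x A_{k_1})\bar A_{k_2}A_{k_3}\in L^2$, and a product of total homogeneity $m=k_1+k_2+k_3$ built from factors in $M_{4n+2,1}$ only lies in $M_{(4n+2)/m,1}$, which embeds into $L^2$ precisely when $m\geq 2n+1$. The terms of $G_t$ have $m\leq 2n-1$, so for $n\geq 2$ (i.e.\ $p>6$) the lowest-order contribution $(v,\partial_t(A_1\bar A_1 A_1))$ is not amenable to Cauchy--Schwarz, and H\"older against $v\in L^2\cap L^\infty$ does not help because the required dual exponent lies below $2$.

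The missing idea is the cancellation the paper exploits: the low-homogeneity part of $\partial_t(|\tilde u|^2\tilde u)$ cancels $G_t$ exactly, so that the dangerous pairing collapses to $(v,\partial_t R)$ with $R$ consisting only of terms of homogeneity at least $2n+1$; such products (with at most one derivative distributed among the factors) do land in $M_{2,1}\subset L^2$, after which one integration by parts and Cauchy--Schwarz close the estimate. In your grouping $\partial_t H = (\tilde u_t,\,|u|^2u-|\tilde u|^2\tilde u) - (v,G_t)$ this cancellation is hidden: you must peel off the part of the first pairing that is linear in $v$ and recombine it with $-(v,G_t)$ before estimating anything. Without that regrouping the argument only covers $p\leq 6$. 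The remaining ingredients of your proposal --- the $L^2$ evolution, the treatment of $(\tilde u_{xx},|v|^2v)$ by a single integration by parts using $\tilde u_x\in L^\infty$, the energy equivalence, and the persistence-of-regularity upgrade to $M^1_{p,1}$ --- match the paper.
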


\begin{proof}
	Via scaling (see e.g. Theorem 3.2. in \cite{corderookoudjou}) we reduce to consider small initial data. Moreover, there exists an $n \in \N_0$ such that $p \leq 4n+2$, hence Lemma \ref{lemma:localmodulation} is applicable and without loss of generality we may assume $p = 4n+2$. Fix some $T > 0$.
	
	We look at the time derivatives of the $L^2$ norm and $H$ and aim to use Gronwall. Now with the notation
	\[
		(f,g) = \int \real(f\bar g)\,dx,
	\]
	we calculate that for $0 \leq t \leq T$,
	\begin{align*}
		\partial_t \frac{1}{2}\|v\|_{L^2}^2 &= (v,v_t) = \big(v,|v+\tilde u|^2(v+\tilde u)-G(t)\big)\\
		&\lesssim \int |v|^2(|v|^2+|\tilde u|^2) + |v|(|\tilde u|^2 \tilde u - G(t))\,dx\\
		&\lesssim E(v) + \|\tilde u\|^2_{L^\infty([0,T]\times \R)}\|v\|_{L^2}^2 + \|v\|_{L^2}\||\tilde u|^2 \tilde u - G(t)\|_{L^\infty([0,T],L^2)}\\
		&\lesssim E(v) + \|v\|_{L^2}^2 +1.
	\end{align*}
	The last inequality was proven in the proof of \eqref{eq:equivalentenergies} and its constant depends both on $T$ and $\|u_0\|_{M_{4n+2,1}}$. For the Hamiltonian, we argue as in \cite[Theorem 4.1]{klaus2021} to see that only time derivatives on terms with $\tilde u$ and $G$ prevail,
	\begin{equation}\label{eq:twosummands}
		\partial_t H = (\tilde u_t,|v|^{2}v+|v|^2\tilde u+ 2\real(\bar v \tilde u)v) + \big(v,\partial_t(|\tilde u|^2\tilde u- G)\big).
	\end{equation}
	Indeed, for the bilinear part of $H$ we calculate \footnote{Strictly speaking this is only formal, the term $(v_t,-v_{xx})$ is not well-defined because both factors are only distributional. One can make this rigorous by going to the interaction picture in the calculation, see \cite[Theorem 4.1]{klaus2021} for details.}
	\begin{align*}
		\partial_t \frac12 (v_x,v_x) &= (v_t,-v_{xx}) = -(v_t,|v+\tilde u|(v+\tilde u)-G),
	\end{align*}
	and for the remaining part,
	\begin{align*}
		&\partial_t \int \frac{1}{4}\big(|v+\tilde u|^{4} - |\tilde u|^{4}\big) - \real(\bar vG)  \, dx 
		\\
		&\qquad = (v_t, |v+\tilde u|^2(v+\tilde u)-G) + (\tilde u_t, |v+\tilde u|^2(v+\tilde u)-|\tilde u|^2 \tilde u) - (v,G_t),
	\end{align*}
	from which \eqref{eq:twosummands} follows. We recall $\tilde u_t = -iG(t) + i\tilde u_{xx}$ and plug this into the first summand. The worst term is
	\[
		(\tilde u_{xx},|v|^2 v) = -(\tilde u_x,(|v|^2v)_x) \lesssim \|\tilde u_x\|_{L^\infty_{t,x}} \|v\|_{L^4}^2\|v_x\|_{L^2} \lesssim E(v),
	\]
	since we are able to bound $\tilde u_x$ in $L^\infty$ because $u_0 \in M_{4n+2,1}^1 \subset M_{\infty,1}^1$. Since $G$, $\tilde u$, and $\tilde u_x$ can be bounded in $L^\infty$ uniformly in time, the other terms in the first summand of \eqref{eq:twosummands} are estimated more easily. It remains to estimate
	\[
		\big(v,\partial_t(|\tilde u|^2\tilde u- G)\big) = (v,\partial_t R),
	\]
	where with the notation from the proof of \eqref{eq:equivalentenergies} we have
	\[
		R = \sum_{k_i, k_1 + k_2 + k_3 \geq 2n+1}^{2n-1} A_{k_1}(u_0)\bar A_{k_2}(u_0)A_{k_3}(u_0).
	\]
	Now for each $k$,
	\[
		i\partial_t A_k(u_0) + \partial_x^2 A_{k}(u_0) = \sum_{k_1+k_2+k_3 = k} A_{k_1}(u_0)\bar A_{k_2}(u_0)A_{k_3}(u_0).
	\]
	Again the worst term comes from the two derivatives. From partial integration,
	\[
	\begin{split}
		&(v,(\partial_x^2 A_{k_1})\bar A_{k_2} A_{k_3}) \\
		&\qquad = - (v_x,(\partial_x A_{k_1})\bar A_{k_2} A_{k_3}) - (v, (\partial_x A_{k_1})(\partial_x \bar A_{k_2}) A_{k_3}) - (v, (\partial_xA_{k_1})\bar A_{k_2}\partial_x A_{k_3}).
	\end{split}
	\]
	In order to use Cauchy Schwartz we have to be sure that the functions that are integrated against $v$ or $v_x$ are in $L^2$. But this holds true since $k_1 + k_2 + k_3 \geq 2n+1$ and since $u_0 \in M_{4n+2,1}^1$. All in all we find
	\[
		\partial_t (H + C\|v\|_{L^2}^2) \lesssim H + C\|v\|_{L^2}^2+1 \quad \text{for all}\quad 0 \leq t \leq T,
	\]
	and hence by Gronwall's lemma
	\[
		\sup_{t \in [0,T]} H(t,v) + C\|v\|_{L^2}^2 < \infty,
	\]
	which proves the theorem.
\end{proof}

\begin{remark}\label{rem:gwp}
	The same method applies to $M_{p,q}^s$ for $2 < p < \infty$ and $s > 2 - 1/q$. In this case by Theorem \ref{thm:embedding} an embedding $M_{p,q}^s \subset M^1_{p,1}$ holds so that the local wellposedness becomes trivial by the algebra property. See also \cite{schippa} for $p = 4$ and $p = 6$, and the remark therein for general $p$ and $q = 2$. This shows that for all spaces $M_{p,q}^s$ with $2 \leq p < \infty, 1\leq q \leq \infty$ one has global wellposedness if $s$ is large enough. Using Lemma \ref{lemma:gwprightup1} and Theorem \ref{thm:gwpp=2} the same holds true for $1 \leq p \leq 2$. It remains open whether a global result can be achieved in a space $M^s_{p,q}$ with $p = \infty$.
\end{remark}

\section{Illposedness for negative regularity}\label{sec:illposedness}

We complement the wellposedness results and show that the cubic NLS is not quantitatively well-posed in $M^s_{p,q}$ if $s<0$. This includes the cases $p,q = \infty$ and extends considerations from the introduction of \cite{schippa} where illposedness was shown using Galilean invariance. We want to remark that results on norm-inflation for nonlinear Schrödinger equations in modulation spaces have been proven in \cite{MR4165061}, though some of them rule out the cubic case due to the complete integrability. The proof of our result is inspired by \cite{MR1885293}. More precisely, we show that:

\begin{theorem}
    When $s < 0$, there is no function space $X_T$ which is continuously embedded into $C([0,T],M^s_{p,q}(\R))$ such that there exists a $C>0$ with
    \begin{equation}
        \|S(t)f\|_{X_T} \leq C \|f\|_{M^s_{p,q}},
    \end{equation}
    and
    \begin{equation}
        \|\int_0^t S(t-s)|u|^2u (s) \, ds\|_{X_T} \leq C \|u\|_{X_T}^3.
    \end{equation}
    In particular, there is no $T>0$ such that the flow map $f \mapsto u(t)$ mapping $f$ to a unique local solution on the interval $[-T,T]$ is $C^3$ at $f=0$ from $M^s_{p,q}$ to $M^s_{p,q}$.
\end{theorem}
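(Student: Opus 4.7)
The plan is to argue by contradiction: I assume the two quantitative bounds hold and use a frequency-localized test function to violate an obvious consequence of them. Combining the linear estimate applied to $u_0 = f$ with the trilinear estimate applied to $u = S(t)f$, and then using the embedding $X_T \hookrightarrow C([0,T], M^s_{p,q})$, produces the ``second Picard iterate'' bound
\[
\sup_{0\le t\le T}\Big\|\int_0^t S(t-s)|S(s)f|^2 S(s)f\,ds\Big\|_{M^s_{p,q}} \lesssim \|f\|^3_{M^s_{p,q}}.
\]
The $C^3$ statement reduces to this same inequality: differentiating the Duhamel formulation three times in $f$ at $f=0$ and acting on $(f,f,f)$ recovers, up to a nonzero constant, exactly this trilinear operator, so $C^3$-regularity of the flow at the origin would force the displayed bound. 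It therefore suffices to exhibit data violating it.

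For the test function I take $\phi \in \Sc(\R)$ nonnegative and not identically zero with $\supp \phi \subset [-1/4, 1/4]$, and for large $N \in \N$ set $\hat f_N(\xi) = N^{-s}\phi(\xi - N)$. Only $\square_N f_N$ is nonzero, and a direct computation gives $\|f_N\|_{M^s_{p,q}} \sim 1$ uniformly in $N$. Analyzing the Duhamel integral on the Fourier side, the nonlinear term $|S(s)f_N|^2 S(s)f_N$ admits the representation
\[
\F(|S(s)f_N|^2 S(s)f_N)(\xi) = e^{-is\xi^2}\int e^{is\Phi}\,\hat f_N(\xi_2)\overline{\hat f_N(\xi_1)}\hat f_N(\xi_3)\,d\mu,
\]
integrated over $\{\xi_2+\xi_3-\xi_1 = \xi\}$, with resonance function $\Phi = 2(\xi_1-\xi_2)(\xi_1-\xi_3)$. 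The crucial observation, and the main obstacle the argument is built to exploit, is that the complex conjugate in $|u|^2u$ lets all three of $\xi_1,\xi_2,\xi_3$ sit within $1/4$ of $N$ while the output frequency $\xi=\xi_2+\xi_3-\xi_1$ still lies in $[N-3/4, N+3/4]$; on this configuration $|\Phi| \lesssim 1$ uniformly in $N$, so there is no oscillation to absorb the large powers of $N$ that arise.

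Inserting this into Duhamel and using $(e^{it\Phi}-1)/(i\Phi) \to t$ as $\Phi \to 0$ yields, for small fixed $t > 0$,
\[
\hat u(t,\xi) \approx t\,N^{-3s}\,e^{-it\xi^2}\,\Psi(\xi - N),
\]
where $\Psi$ is a fixed, nontrivial, compactly supported function (a triple convolution of $\phi$ along the resonant surface). Since $\hat u(t)$ is supported in $[N-3/4, N+3/4]$, only $\square_N u(t)$ contributes, and a standard $L^p$ computation, using that modulation by $e^{iNx}$ is an $L^p$ isometry, gives $\|\square_N u(t)\|_{L^p} \gtrsim t\,N^{-3s}$. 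Hence
\[
\|u(t)\|_{M^s_{p,q}} \sim \langle N\rangle^s \|\square_N u(t)\|_{L^p} \gtrsim t\,N^{-2s}.
\]
Combined with $\|f_N\|^3_{M^s_{p,q}} \sim 1$, the displayed trilinear inequality forces $t\,N^{-2s} \lesssim 1$; but $s < 0$ makes $N^{-2s}\to\infty$, so fixing any $t \in (0,T]$ and sending $N \to \infty$ yields the desired contradiction.
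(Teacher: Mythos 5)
Your proposal is correct and follows essentially the same route as the paper: reduce both the quantitative-wellposedness and the $C^3$ statements to the failure of the third Picard iterate bound, test with data concentrated at a single high frequency $N$, and exploit that the resonance function $2(\xi_1-\xi_2)(\xi_1-\xi_3)$ is $O(1)$ on the support so the Duhamel term grows like $t$ times the cube of the amplitude, forcing $tN^{-2s}\lesssim 1$. The only (cosmetic) difference is your choice of a fixed-width smooth bump normalized to unit modulation norm, where the paper uses $\hat u_0=\chi_{[N,N+\alpha]}$ with $\alpha\sim N^{-\varepsilon}$; both yield the same contradiction for $s<0$.
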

\begin{proof}
    We first prove that the failure of the above estimates implies that the data-to-solution map cannot be $C^3$. Indeed, we consider $f = \gamma u_0$ where $u_0 \in M_{p,q}^s$ is fixed, and denote by $u(\gamma,t,x)$ the unique solution of \eqref{eq:fpnls}. Then
    \begin{equation*}
    \begin{split}
		u &= S(t)\gamma u_0 \mp 2i \int_0^t S(t-s)(|u|^2 u)\,ds,\\
		\partial_\gamma u &= S(t)u_0 \mp 2i \int_0^t S(t-s)(2|u|^2 \partial_\gamma u + u^2 \partial_\gamma\bar u)\,ds,\\
		\partial_\gamma^2 u &= \mp 2i \int_0^t S(t-s)(2|u|^2 \partial_\gamma^2 u + u^2 \partial_\gamma^2\bar u + 4|\partial_\gamma u|^2 u + 2(\partial_\gamma u)^2 \bar u)\,ds,\\
		\partial_\gamma u &= \mp 2i \int_0^t S(t-s)(2|u|^2 \partial_\gamma^3 u + u^2 \partial_\gamma^3\bar u + 6\partial_\gamma^2 u \partial_\gamma u \bar u + 6 \partial_\gamma^2 u \partial_\gamma \bar u u + 6 \partial_\gamma^2 \bar u \partial_\gamma u u \\
		&\quad+ 6|\partial_\gamma u|^2 \partial_\gamma u)\,ds.
	\end{split}
    \end{equation*}
    Putting $\gamma  = 0$ will give $u = 0$, then $\partial_\gamma u = S(t)u_0$, then $\partial_\gamma^2 u = 0$ and,
    \[
        \partial_\gamma^3 u(0,t,x) = \mp 12i \int_0^t S(t-s) (|S(s)u_0|^2 S(s)u_0) \, ds.
    \]
    If the flow is $C^3$, then this implies for any $t \in [0,T]$ the bound
    \begin{equation}\label{eq:1stiterate}
        \|\int_0^t S(t-s) (|S(s)u_0|^2 S(s)u_0) \, ds\|_{M^s_{p,q}} \lesssim \|u_0\|_{M^s_{p,q}}^3.
    \end{equation}
    We will show below that \eqref{eq:1stiterate} fails, which then gives the claim.
    
    To show that there is no quantitative wellposedness, we show failure of \eqref{eq:1stiterate} as well. Indeed, using the linear bound in the nonlinear bound would exactly imply \eqref{eq:1stiterate}.
    
    To prove failure of \eqref{eq:1stiterate}, we look for a lower bound of
    \begin{align*}
        g(t,x) = \int_0^t S(t-s) (|S(s)u_0|^2 S(s)u_0) \, ds.
    \end{align*}
    Denote by $\hat{g}(t,\xi)$ the Fourier transform $x \mapsto \xi$ of $g$. We rewrite
    \begin{align*}
        \hat g(t,\xi) &= \int_0^t e^{i(t-s)\xi^2} \int_{\xi_1 - \xi_2 + \xi_3 = \xi} e^{is(\xi_1^2 - \xi_2^2 +\xi_3^2)}\hat u_0(\xi_1) \hat u_0(\xi_2) \hat u_0(\xi_3) \, d\xi_1 d\xi_3 ds\\
        &= e^{it\xi^2} \int_{\xi_1 - \xi_2 + \xi_3 = \xi}\hat u_0(\xi_1) \hat u_0(\xi_2) \hat u_0(\xi_3) \frac{e^{it\chi} -1}{i \chi}\,d\xi_1 d\xi_3,
    \end{align*}
    where $\chi = \xi_1^2 - \xi_2^2 + \xi_3^2 - \xi^2$. We choose $\hat u_0(\xi) = \chi_{[N,N+\alpha]}$ compactly supported at frequency $N$, where $N \gg 1$, $\alpha \ll 1$. Then, $\hat g$ can only be nonzero when $\xi \in [N-\alpha,N+2\alpha]$. Moreover, when $\xi = \xi_1 - \xi_2 + \xi_3$, we have the factorization
    \begin{equation}
        \chi = -2(\xi-\xi_1)(\xi-\xi_3),
    \end{equation}
    which is of size $\alpha^2$. In particular choosing $\alpha \sim N^{-\varepsilon}$, we find
    \begin{equation*}
        \left| \frac{e^{it\chi} -1}{i \chi} \right| \gtrsim |t| + O(N^{-\varepsilon}).
    \end{equation*}
    Now the modulation space norm in $M_{p,q}^s$ of $u_0$ is
    \begin{equation*}
        \|u_0\|_{M_{p,q}^s} \sim N^s.
    \end{equation*}
    Similarly, by integrating in $\xi_1$ and $\xi_3$, the modulation space norm of $g$ is then
    \begin{equation*}
        \|g\|_{M_{p,q}^s} \gtrsim N^s\alpha^2.
    \end{equation*}
    This shows that in order for \eqref{eq:1stiterate} to hold, we need to have
    \begin{equation*}
        N^{s-\varepsilon} \lesssim N^{3s}
    \end{equation*}
    which only works when $s \geq 0$.
\end{proof}

\newpage

\bibliographystyle{plain}

\begin{thebibliography}{10}

\bibitem{wangzhaoguo}
Wang Baoxiang, Zhao Lifeng, and Guo Boling.
\newblock Isometric decomposition operators, function spaces
  {$E^\lambda_{p,q}$} and applications to nonlinear evolution equations.
\newblock {\em J. Funct. Anal.}, 233(1):1--39, 2006.

\bibitem{bejenaru-tao}
Ioan Bejenaru and Terence Tao.
\newblock Sharp well-posedness and ill-posedness results for a quadratic
  non-linear {S}chr\"{o}dinger equation.
\newblock {\em J. Funct. Anal.}, 233(1):228--259, 2006.

\bibitem{benedekpanzone}
A.~Benedek and R.~Panzone.
\newblock The space {$L^{p}$}, with mixed norm.
\newblock {\em Duke Math. J.}, 28:301--324, 1961.

\bibitem{benyigroechenig}
\'{A}rp\'{a}d B\'{e}nyi, Karlheinz Gr\"{o}chenig, Kasso~A. Okoudjou, and
  Luke~G. Rogers.
\newblock Unimodular {F}ourier multipliers for modulation spaces.
\newblock {\em J. Funct. Anal.}, 246(2):366--384, 2007.

\bibitem{benyi}
\'{A}rp\'{a}d B\'{e}nyi and Kasso~A. Okoudjou.
\newblock Local well-posedness of nonlinear dispersive equations on modulation
  spaces.
\newblock {\em Bull. Lond. Math. Soc.}, 41(3):549--558, 2009.

\bibitem{berghloefstroem}
J\"{o}ran Bergh and J\"{o}rgen L\"{o}fstr\"{o}m.
\newblock {\em Interpolation spaces. {A}n introduction}.
\newblock Grundlehren der Mathematischen Wissenschaften, No. 223.
  Springer-Verlag, Berlin-New York, 1976.

\bibitem{MR4165061}
Divyang~G. Bhimani and R\'{e}mi Carles.
\newblock Norm inflation for nonlinear {S}chr\"{o}dinger equations in
  {F}ourier-{L}ebesgue and modulation spaces of negative regularity.
\newblock {\em J. Fourier Anal. Appl.}, 26(6):Paper No.78, 34, 2020.

\bibitem{bhimani}
Divyang~G. Bhimani and P.~K. Ratnakumar.
\newblock Functions operating on modulation spaces and nonlinear dispersive
  equations.
\newblock {\em J. Funct. Anal.}, 270(2):621--648, 2016.

\bibitem{dispersiveblowup}
J.~L. Bona, G.~Ponce, J.-C. Saut, and C.~Sparber.
\newblock Dispersive blow-up for nonlinear {S}chr\"{o}dinger equations
  revisited.
\newblock {\em J. Math. Pures Appl. (9)}, 102(4):782--811, 2014.

\bibitem{chaichenetsthesis}
Leonid Chaichenets.
\newblock {\em Modulation spaces and nonlinear {S}chr\"{o}dinger equations}.
\newblock PhD thesis, Karlsruhe Institute of Technology (KIT), sep 2018.

\bibitem{kitgroup2}
Leonid Chaichenets, Dirk Hundertmark, Peer Kunstmann, and Nikolaos Pattakos.
\newblock On the existence of global solutions of the one-dimensional cubic
  {NLS} for initial data in the modulation space {$M_{p,q}(\Bbb R)$}.
\newblock {\em J. Differential Equations}, 263(8):4429--4441, 2017.

\bibitem{kitgroup1}
Leonid Chaichenets, Dirk Hundertmark, Peer Kunstmann, and Nikolaos Pattakos.
\newblock Nonlinear {S}chr\"{o}dinger equation, differentiation by parts and
  modulation spaces.
\newblock {\em J. Evol. Equ.}, 19(3):803--843, 2019.

\bibitem{corderonicola}
Elena Cordero and Fabio Nicola.
\newblock Sharpness of some properties of {W}iener amalgam and modulation
  spaces.
\newblock {\em Bull. Aust. Math. Soc.}, 80(1):105--116, 2009.

\bibitem{corderookoudjou}
Elena Cordero and Kasso~A. Okoudjou.
\newblock Dilation properties for weighted modulation spaces.
\newblock {\em J. Funct. Spaces Appl.}, pages Art. ID 145491, 29, 2012.

\bibitem{dss2}
Benjamin Dodson, Avraham Soffer, and Thomas Spencer.
\newblock The nonlinear {S}chr\"{o}dinger equation on {Z} and {R} with bounded
  initial data: examples and conjectures.
\newblock {\em J. Stat. Phys.}, 180(1-6):910--934, 2020.

\bibitem{dodson}
Benjamin Dodson, Avraham Soffer, and Thomas Spencer.
\newblock Global well-posedness for the cubic nonlinear {S}chr\"{o}dinger
  equation with initial data lying in {$L^p$}-based {S}obolev spaces.
\newblock {\em J. Math. Phys.}, 62(7):071507, 13, 2021.

\bibitem{feichtinger}
Hans~Georg Feichtinger.
\newblock Modulation spaces on locally compact abelian groups.
\newblock {\em Technical Report, University Vienna}, 1983.

\bibitem{groechenig}
Karlheinz Gr\"{o}chenig.
\newblock {\em Foundations of time-frequency analysis}.
\newblock Applied and Numerical Harmonic Analysis. Birkh\"{a}user Boston, Inc.,
  Boston, MA, 2001.

\bibitem{guo}
Shaoming Guo.
\newblock On the 1{D} cubic nonlinear {S}chr\"{o}dinger equation in an almost
  critical space.
\newblock {\em J. Fourier Anal. Appl.}, 23(1):91--124, 2017.

\bibitem{wanghudzik}
Henryk Hudzik, Yuwen Wang, and Wenjing Zheng.
\newblock Criteria for the metric generalized inverse and its selections in
  {B}anach spaces.
\newblock {\em Set-Valued Anal.}, 16(1):51--65, 2008.

\bibitem{MR3820439}
Rowan Killip, Monica Vi\c{s}an, and Xiaoyi Zhang.
\newblock Low regularity conservation laws for integrable {PDE}.
\newblock {\em Geom. Funct. Anal.}, 28(4):1062--1090, 2018.

\bibitem{klaus2021}
Friedrich Klaus and Peer Kunstmann.
\newblock Global wellposedness of {NLS} in ${H}^1(\mathbb{R}) +
  {H}^s(\mathbb{T})$, {\em arXiv:2109.11341}, 2021.

\bibitem{MR3874652}
Herbert Koch and Daniel Tataru.
\newblock Conserved energies for the cubic nonlinear {S}chr\"{o}dinger equation
  in one dimension.
\newblock {\em Duke Math. J.}, 167(17):3207--3313, 2018.

\bibitem{kochtataruvisan}
Herbert Koch, Daniel Tataru, and Monica Vi\c{s}an.
\newblock {\em Dispersive equations and nonlinear waves}, volume~45 of {\em
  Oberwolfach Seminars}.
\newblock Birkh\"{a}user/Springer, Basel, 2014.
\newblock Generalized Korteweg-de Vries, nonlinear Schr\"{o}dinger, wave and
  Schr\"{o}dinger maps.

\bibitem{MR1885293}
L.~Molinet, J.~C. Saut, and N.~Tzvetkov.
\newblock Ill-posedness issues for the {B}enjamin-{O}no and related equations.
\newblock {\em SIAM J. Math. Anal.}, 33(4):982--988, 2001.

\bibitem{ohwang}
Tadahiro Oh and Yuzhao Wang.
\newblock Global well-posedness of the one-dimensional cubic nonlinear
  {S}chr\"{o}dinger equation in almost critical spaces.
\newblock {\em J. Differential Equations}, 269(1):612--640, 2020.

\bibitem{pattakos}
N.~Pattakos.
\newblock {NLS} in the modulation space {$M_{2,q}({\Bbb {R}})$}.
\newblock {\em J. Fourier Anal. Appl.}, 25(4):1447--1486, 2019.

\bibitem{ruzhansky}
Michael Ruzhansky, Mitsuru Sugimoto, and Baoxiang Wang.
\newblock Modulation spaces and nonlinear evolution equations.
\newblock In {\em Evolution equations of hyperbolic and {S}chr\"{o}dinger
  type}, volume 301 of {\em Progr. Math.}, pages 267--283.
  Birkh\"{a}user/Springer Basel AG, Basel, 2012.

\bibitem{schippa}
Robert Schippa.
\newblock On smoothing estimates in modulation spaces and the nonlinear
  {S}chr\"{o}dinger equation with slowly decaying initial data.
\newblock {\em J. Funct. Anal.}, 282(5):Paper No. 109352, 46, 2022.

\bibitem{schippa2}
Robert Schippa.
\newblock Infinite-energy solutions to energy-critical nonlinear {S}chr\"{o}dinger
  equations in modulation spaces, {\em arXiv:2204.01001}, 2022.

\bibitem{tao}
Terence Tao.
\newblock {\em Nonlinear dispersive equations, Local and global analysis}, volume 106 of {\em CBMS
  Regional Conference Series in Mathematics}.
\newblock Published for the Conference Board of the Mathematical Sciences,
  Washington, DC; by the American Mathematical Society, Providence, RI, 2006.
  
\bibitem{thomas}
Erik G.~F. Thomas.
\newblock A polarization identity for multilinear maps. With an appendix by Tom H. Koornwinder.
\newblock {\em Indag. Math. (N.S.)}, 25(3):468--474, 2014.

\bibitem{wanghudzik2}
Baoxiang Wang and Henryk Hudzik.
\newblock The global {C}auchy problem for the {NLS} and {NLKG} with small rough
  data.
\newblock {\em J. Differential Equations}, 232(1):36--73, 2007.

\bibitem{wangbook}
Baoxiang Wang, Zhaohui Huo, Chengchun Hao, and Zihua Guo.
\newblock {\em Harmonic analysis method for nonlinear evolution equations.
  {I}}.
\newblock World Scientific Publishing Co. Pte. Ltd., Hackensack, NJ, 2011.

\end{thebibliography}

\end{document}